\newcommand{\cA}{\mathcal{A}}
\newcommand{\cC}{\mathcal{C}}
\newcommand{\cD}{\mathcal{D}}
\newcommand{\cH}{\mathcal{H}}
\newcommand{\cI}{\mathcal{I}}
\newcommand{\cJ}{\mathcal{J}}
\newcommand{\cL}{\mathcal{L}}
\newcommand{\cQ}{\mathcal{Q}}
\newcommand{\cR}{\mathcal{R}}
\newcommand{\cT}{\mathcal{T}}
\newcommand{\cU}{\mathcal{U}}
\newcommand{\cX}{\mathcal{X}}
\newcommand{\cY}{\mathcal{Y}}
\newcommand{\bR}{\mathbb{R}}
\newcommand{\bONE}{\mathbbm{1}}
\newcommand{\Ba}{Ba}
\newcommand{\dd}{ \mathrm{d}}
\DeclareMathOperator*{\LIM}{LIM} 
\DeclareMathOperator*{\subLIM}{subLIM}
\DeclareMathOperator*{\superLIM}{superLIM}
\renewcommand{\epsilon}{\varepsilon}
\newcommand{\vn}[1]{\left| \! \left| #1\right| \! \right|}
\newcommand{\ip}[2]{\langle #1,#2\rangle}
\numberwithin{equation}{section}
\newtheorem{theorem}{Theorem}[section]
\newtheorem{lemma}[theorem]{Lemma}
\newtheorem{proposition}[theorem]{Proposition}
\theoremstyle{definition}
\newtheorem{definition}[theorem]{Definition}
\newtheorem{remark}[theorem]{Remark}
\newtheorem{assumption}[theorem]{Assumption}
\newtheorem{example}[theorem]{Example}
\newtheorem{condition}[theorem]{Condition}
\begin{document}

\title{Gamma convergence on path-spaces via convergence of viscosity solutions of Hamilton-Jacobi equations}

\author{Richard C. Kraaij\thanks{Delft Institute of Applied Mathematics, Delft University of Technology, Van Mourik Broekmanweg 6, 2628 XE Delft, The Netherlands. \emph{E-mail address}: r.c.kraaij@tudelft.nl}}
\date{\today}

	\maketitle

	\begin{abstract}
			We establish a framework that allows to prove Gamma-converge of functionals of Lagrangian form on spaces of trajectories based on convergence of viscosity solutions of associated Hamilton-Jacobi equations. 
			
			Gamma convergence follows from a: equi-coercivity, b: Gamma convergence of the projected functional at time $0$, c: convergence of the Hamiltonians that appear as Legendre transform of the Lagrangian in the path-space functional.

		\smallskip

		\noindent \emph{Keywords: Gamma convergence \and Hamilton-Jacobi equation; \and viscosity solutions; \and Barles-Perthame method; non-linear semigroups}

		\noindent \emph{MSC2010 classification: 49J45, 49L25, 47H20} 
	\end{abstract}


\section{Introduction}

When considering Gamma convergence, see \cite{Br02,DM93}, of functionals $F_n$ to $F$, it is well known that helps to express the functionals $F^n$ and $F$ in terms of an operator for which one can prove convergence. 

Consider for example a Hilbert space $H$ equipped with its weak topology and self adjoint operators $A_n,A$ satisfying $\ip{Ah}{h} \geq \lambda \vn{h}^2$ and $\ip{Ah}{h} \geq \lambda \vn{h}^2$. Then Gamma convergence of $F^n$ to $F$ where
\begin{equation*}
F^n(h) := \sup_{h_0 \in \cD(A_n)} \ip{A_n h_0}{2h-h_0}, \qquad F(h) := \sup_{h_0 \in \cD(A)} \ip{A h_0}{2h-h_0},
\end{equation*}
is equivalent to G-convergence, a notion related to convergence of the resolvents of $A$. See Chapter 13 of \cite{Br02}.

\smallskip

Instead of considering Hilbert spaces, we will consider the context of trajectories on some space $X$ (thus, e.g. $C(X)$) and functionals $I^n,I : C(X) \rightarrow [0,\infty]$ of the type
\begin{equation} \label{eqn:functional_intro}
I^n(\gamma) = \begin{cases}
I_0^n(\gamma(0)) + \int_0^\infty \cL_n(\gamma(s),\dot{\gamma}(s)) \dd s & \text{if } \gamma \in \cA\cC(X), \\
\infty & \text{otherwise},
\end{cases}
\end{equation}
where $\cA\cC(X)$ is some space of absolutely continuous trajectories on $X$. $\cL_n : TX \rightarrow [0,\infty]$ is a `Lagrangian' that could be some quadratic norm of the speed $\dot{\gamma}(s)$ or a functional related to the entropy necessary to create the `speed' $\dot{\gamma}(s)$ at time $s$.

\smallskip

In this paper, we establish that the operator to look at in this context to obtain Gamma convergence of $I^n$ to $I$ are related to the Hamiltonian $\cH_n : T^* X \rightarrow \bR$ obtained by taking the Legendre transforms of $\cL_n$. If one formally defines $H_n f(x) := \cH_n(x,\dd f(x))$ then our main result essentially states: 
\begin{itemize}
	\item Suppose that the functionals $I^n$ are equi-coercive;
	\item Suppose that the functionals $I^n_0$ Gamma converge to $I_0$;
	\item Suppose that for each $f \in \cD(H)$ there are $f_n \in \cD(H_n)$ such that $f_n \rightarrow f$ and $H_nf_n \rightarrow f$;
	\item The Hamilton-Jacobi equation $f - \lambda Hf = h$ is well posed for all $\lambda > 0$ and $h \in C_b(X)$.
\end{itemize}
Then we have Gamma convergence of $I^n$ to $I$.

\smallskip

Our result is stated for functionals $I^n,I$ on a space of the type $C(X)$ with the compact-open topology. This setting introduces the need, and the possibility, to use different techniques than on Hilbert spaces. One of the key advantages of $C(X)$ is its projective limit structure. Namely, if $\gamma_n,\gamma$ are in some compact set $K \subseteq C(X)$, than $\gamma_n \rightarrow \gamma$ for the compact-open topology if and only if $\gamma_n(t) \rightarrow \gamma(t)$ for all times $t \geq 0$.

As a consequence of this fact, the functionals $I^n,I$ can be written in a projective limit form:
\begin{align*}
I^n(\gamma) & = \sup_{\{0 = t_0 < t_1 < \dots < t_k\}} I^n[t_0,t_1,\dots,t_k](\gamma(t_0),\dots,\gamma(t_k)) \\
& = \sup_{\{0=t_0 < t_1 < \dots < t_k\}} I_0(\gamma(0)) + \sum_{i=1}^k I_{t_i - t_{i-1}}(\gamma(t_i) \, | \, \gamma(t_{i-1})),
\end{align*}
where
\begin{equation*}
I_t(y \, | \, x) = \inf_{\substack{\gamma \in \cA\cC \\ \gamma(0) = x, \gamma(t) = y}} \int_0^t \cL_n(\gamma(s),\dot{\gamma}(s)) \dd s.
\end{equation*}
Morally, given equi-coercivity of the sequence $I^n$, this allows one to reduce the analysis of $I^n$ and $I$ to that of $I^n_0, I_0$ and the conditional functionals $I_t^n(\cdot \, | \, \cdot)$ and $I_t(\cdot \, | \, \cdot)$. Such a procedure is well known in the context of weak convergence and large deviations of Markov processes and we refer to \cite{EK86,FK06} for two accounts of these two topics. In both these contexts, the convergence or large deviations of the finite dimensional distributions is reduced to the convergence of semigroups and afterwards to that of `generators'. As weak convergence and large deviations are special case of Gamma convergence, see \cite{Ma18}, a similar method of proof can be expected to work in the context of Gamma convergence and this is what this paper establishes.

\smallskip

In the context of Gamma convergence, the conditional functionals can indeed also be rewritten in terms of a semigroup. Namely
\begin{equation*}
I_t(y \, | \, x) = \sup_{f \in C_b(X) } \left\{f(y) - V(t)f(x) \right\}
\end{equation*}
where the semigroup $V(t)$ is of the form
\begin{equation*}
V(t)f(x) =  \sup_{\substack{\gamma \in \cA\cC \\ \gamma(0) = x}} \left\{f(\gamma(t)) - \int_0^t \cL_n(\gamma(s),\dot{\gamma}(s)) \dd s \right\}.
\end{equation*}
A formal derivative of $V(t)f$ yields that its generator is indeed $Hf(x) = \cH(x,\nabla f(x))$, where $\cH$ is the Legendre transform of $\cL$.

Thus, Gamma convergence of $I_0^n$ to $I_0$ and convergence of the Hamiltonians $H_n$ constructed from $\cL_n$ to the Hamiltonian $H$ constructed from $\cL$ should yield Gamma convergence of $I^n$ to $I$.

Thus, a goal is to obtain convergence of the semigroups $V_n(t)$ to $V(t)$, arguing via the Hamiltonians $H_n$ and $H$. Convergence results in the context of non-linear semigroups are technically challenging. One can use e.g. a combination of Crandall-Liggett \cite{CL71} and the Kurtz approximation procedure \cite{Ku73} to establish that if generators converge, then their semigroups convergence. However, one needs to establish the range condition: i.e. well posedness of the equation $f - \lambda Hf = h$ in the classical sense. For non-linear equations, this is difficult and it was observed early on \cite{CL83} that viscosity solutions can be used to replace classical solutions. Thus instead of focussing on convergence of $H_n$ to $H$, we will focus on establishing that viscosity solutions to $f - \lambda H_n f =h $ converge to viscosity solutions of $f - \lambda Hf = h$. To do so, we work via a generalization \cite{Kr19} of the semi-relaxed $\limsup$ and $\liminf$ procedure introduced by \cite{BaPe88,BaPe90} for which we also need uniqueness of viscosity solutions to the Hamilton-Jacobi equation $f - \lambda Hf = h$ and some notion of control of the viscosity solutions and semigroups as the space $X$ might be non-compact.

\smallskip

The paper is organized as follows. In the next Section, \ref{section:basic_main}, we state a simplified version of our main result (\ref{theorem:main_theorem_basic}). In this version, technicalities have been reduced to the bare minimum. We also give the main definitions on Gamma convergence, equi-coercivity and convergence of Hamiltonians.

In Section \ref{section:preliminaries}, we proceed with a more general set-up. This more general set-up allows for the extension of our main result to achieve the following:
\begin{itemize}
	\item We can work with spaces of trajectories that allow for discontinuities. The key property allowing such a generalization is that the topology is in some sense determined by the finite dimensional projections.
	\item We can consider functionals $I^n$ on spaces of paths that take their values on a sequence of different spaces. This will allow for embeddings or settings with e.g. homogenisation.
	\item We will work with pairs of Hamiltonians $H_{n,\dagger},H_{n,\ddagger}$ that serve as an upper and lower bound for $H_n$. This is natural for Hamilton-Jacobi equations in infinite dimensional context in which it is hard to explicitly give $H_n$. See e.g. \cite{Fe06,FeMiZi18,AmFe14,CrLi94}.
\end{itemize}

We give our extended main result in Section \ref{section:main_results} and the proofs follow in Section \ref{section:proofs_main_results}.


\section{A basic result on Gamma convergence} \label{section:basic_main}

Before going to the most general version of the main result, we introduce a basic variant in which versatility of the main result is reduced to obtain a easy to understand statement.

In Section \ref{subsection:basic_main_statement} we state this result without introducing various definitions. Most essential definitions on Gamma-convergence, functionals and how they are determined by semigroups are stated immediately afterwards in Section \ref{subsection:basic_main_definitions}. What are viscosity solutions, Hamiltonians and pseudo-resolvents  and how pseudo-resolvents generate semigroups can be found in slightly more general context in Appendix \ref{appendix:operators_topology_and_viscosity_solutions}.

\smallskip

First some general definitions. All spaces in this paper are assumed to be completely regular spaces that have metrizable compact sets. Let $X$ be a space then we denote by $C_b(X)$ the set of continuous and bounded functions into $\bR$. We denote by $\Ba(X)$ the space of Baire measurable sets (the $\sigma$-algebra generated by $C_b(X)$.) By $M(X)$, we denote by set of Baire measurable functions from $X$ into $\overline{\bR} := [-\infty,\infty]$. $M_b(X)$ denotes the set of bounded Baire measurable functions.  Denote
\begin{align*}
USC_u(X) & := \left\{f \in M(X) \, \middle| \,  f \, \text{upper semi-continuous}, \sup_x f(x) < \infty \right\}, \\
LSC_l(X) & := \left\{f \in M(X) \, \middle| \, f \, \text{lower semi-continuous}, \inf_x f(x) > \infty \right\}.
\end{align*}
For $g \in M(X)$ denote by $g^*,g_* \in M(X)$ the upper and lower semi-continuous regularizations of $g$.

Finally, we denote by $C_X(\bR^+)$ the space of continuous trajectories $\gamma : \bR^+ \rightarrow X$ and by $D_X(\bR^+)$, cf. \cite{EK86}, the Skorokhod space of trajectories that are right continuous and have left limits.

\subsection{Gamma convergence via convergence of Hamiltonians} \label{subsection:basic_main_statement}

\begin{condition} \label{condition:convergence_of_generators_basic}
	There are operators $H_n \subseteq C_b(X) \times C_b(X)$, contractive pseudo-resolvents $R_n(\lambda) : C_b(X) \rightarrow C_b(X)$, $\lambda >0$ and contractive semigroups $V_n(t) : C_b(X) \rightarrow C_b(X)$ generated by $R_n(\lambda)$.	These operators have the following properties:
	\begin{enumerate}[(a)]
		\item \label{item:convH_pseudoresolvents_solve_HJ_basic} For each $n \geq 1$, $\lambda > 0$ and $h \in C_b(X)$ the function $R_n(\lambda)h$ is a viscosity solution to $f - \lambda  H_n f= h$. 
		\item \label{item:convH_strict_equicont_resolvents_basic} We have local strict equi-continuity on bounded sets for the resolvents: for all compact sets $K \subseteq X$, $\delta > 0$ and $\lambda_0 > 0$, there is a compact set $\hat{K} = \hat{K}(K,\delta,\lambda_0)$ such that for all $n$ and $h_{1},h_{2} \in C_b(X)$ and $0 < \lambda \leq \lambda_0$ that
		\begin{multline*}
		\sup_{y \in K} \left\{ R_n(\lambda)h_{1}(y) - R_n(\lambda)h_{2}(y) \right\} \\
		\leq \delta \sup_{x \in X} \left\{ h_{1}(x) - h_{2}(x) \right\} + \sup_{y \in \hat{K}} \left\{ h_{1}(y) - h_{2}(y) \right\}.
		\end{multline*}
		\item \label{item:convH_strict_equicont_semigroups_basic}  We have local strict equi-continuity on bounded sets for the semigroups: for all compact sets $K \subseteq X$, $\delta > 0$ and $t_0 > 0$, there is a compact set $\hat{K} = \hat{K}(K,\delta,\lambda_0)$ such that for all $n$ and $h_{1},h_{2} \in C_b(X)$ and $0 \leq t \leq t_0$ that
		\begin{multline*}
		\sup_{y \in K} \left\{ V_n(t)h_{1}(y) - V_n(t)h_{2}(y) \right\} \\
		\leq \delta \sup_{x \in X} \left\{ h_{1}(x) - h_{2}(x) \right\} + \sup_{y \in \hat{K}} \left\{ h_{1}(y) - h_{2}(y) \right\}.
		\end{multline*}
	\end{enumerate}
\end{condition}

\begin{theorem} \label{theorem:main_theorem_basic}
	Suppose that we have coercive path-space functionals $I^n : C_X(\bR^+) \rightarrow [0,\infty]$ and $I : C_X(\bR^+) \rightarrow [0,\infty]$ that are generated by semigroups $V_n(t)$ and $V(t)$ with initial functionals $I_0^n$ and $I_0$.
	\begin{enumerate}[(a)]
		\item The functionals $I^n$ are equi-coercive,
		\item The functionals $I^n_0$ Gamma-converge to $I_0$
		\item Condition \ref{condition:convergence_of_generators_basic} is satisfied.
		\item We have $H \subseteq \LIM H_n$;
		\item The comparison principle holds for the Hamilton-Jacobi equations $f - \lambda H f = h$ for all $h \in C_b(X)$ and $\lambda > 0$. Denote the unique solution by $R(\lambda)h$.
		\item For all $h \in C_b(X)$ and $\lambda > 0$ It holds that $R(\lambda)h$ buc converges to $h$ (bounded and uniform on compacts).
	\end{enumerate}
	Then we have $I = \Gamma-\lim I^n$.
\end{theorem}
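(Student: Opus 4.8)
The overall strategy follows the Feng--Kurtz scheme for weak convergence and large deviations of Markov processes \cite{EK86,FK06}: pass from convergence of the Hamiltonians to convergence of the resolvents, then to convergence of the nonlinear semigroups, then to $\Gamma$-convergence of the finite-dimensional projections of the path-space functionals, and finally to $\Gamma$-convergence of the full functionals using equi-coercivity. \textbf{Step 1 (resolvent convergence).} Fix $\lambda > 0$ and $h \in C_b(X)$. By Condition \ref{condition:convergence_of_generators_basic} each $R_n(\lambda)h$ solves $f - \lambda H_n f = h$ in the viscosity sense. Following the generalized Barles--Perthame procedure of \cite{Kr19} (see also \cite{BaPe88,BaPe90}), form the relaxed upper limit $\hatsuperLIM R_n(\lambda)h$ and the relaxed lower limit $\hatsubLIM R_n(\lambda)h$; invoking $H \subseteq \LIM H_n$ (hypothesis (d)) one checks that the former is an upper semicontinuous subsolution and the latter a lower semicontinuous supersolution of the limiting equation $f - \lambda H f = h$. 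The local strict equi-continuity of the $R_n(\lambda)$ on bounded sets stated in Condition \ref{condition:convergence_of_generators_basic}, together with their contractivity, guarantees that these relaxed limits are real-valued and inherit enough of the containment structure of $h$ that the comparison principle of hypothesis (e) applies, giving $\hatsuperLIM R_n(\lambda)h \leq \hatsubLIM R_n(\lambda)h$; since the reverse inequality always holds pointwise, both coincide with the unique solution $R(\lambda)h$, and the equi-continuity estimate upgrades this to buc-convergence $R_n(\lambda)h \to R(\lambda)h$.

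\textbf{Step 2 (semigroup convergence) and the $\Gamma$-liminf inequality.} Each $V_n(t)$ is the Crandall--Liggett semigroup $\lim_{m\to\infty} R_n(t/m)^m$ generated by $R_n(\lambda)$, and by hypotheses (e)--(f) the same exponential formula produces the semigroup of $R(\lambda)$, which we identify with $V(t)$; the equi-continuity estimates in Condition \ref{condition:convergence_of_generators_basic} make the rate of this approximation uniform in $n$, so that Step 1 together with a diagonal argument yields buc-convergence $V_n(t)f \to V(t)f$ for all $f \in C_b(X)$, locally uniformly in $t$. The $\Gamma$-liminf inequality is then soft: for $\gamma_n \to \gamma$ in $C_X(\bR^+)$, any partition $0 = t_0 < \dots < t_k$ and any $f_1,\dots,f_k \in C_b(X)$, the projective representation of $I^n$ gives $I^n(\gamma_n) \geq I^n_0(\gamma_n(0)) + \sum_{i=1}^k ( f_i(\gamma_n(t_i)) - V_n(t_i - t_{i-1}) f_i(\gamma_n(t_{i-1})) )$; letting $n \to \infty$ using Step 2, continuity of the $f_i$ and the $\Gamma$-liminf of $I^n_0$ (hypothesis (b)), and then taking suprema over the $f_i$ and over partitions, gives $\liminf_n I^n(\gamma_n) \geq I(\gamma)$.

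\textbf{Step 3 (recovery sequence).} Fix $\gamma$ with $I(\gamma) < \infty$ and $\epsilon > 0$. Choose a partition $0 = t_0 < \dots < t_k$ so that the projective sum $I^n_0(\gamma(0)) + \sum_i I_{t_i-t_{i-1}}(\gamma(t_i)\mid\gamma(t_{i-1}))$ is within $\epsilon$ of $I(\gamma)$ and the tail contribution past $t_k$ is at most $\epsilon$. Using the duality $I_t(y\mid x) = \sup_f \{ f(y) - V(t)f(x) \}$ and the semigroup convergence of Step 2, the prelimit conditional costs converge to the limiting ones; combining this with a recovery sequence $x_n \to \gamma(0)$ for $I^n_0$ (hypothesis (b)), select near-optimal prelimit conditional trajectories on each interval $[t_{i-1},t_i]$ together with a near-optimal trajectory absorbing the small tail, and concatenate them into $\gamma_n$. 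Time-consistency of near-optimal conditional trajectories shows that refining the partition does not increase the value, so $I^n(\gamma_n)$ equals the chosen projective sum up to the $O(\epsilon)$ errors. Equi-coercivity (hypothesis (a)) ensures the $\gamma_n$ lie in a common compact subset of $C_X(\bR^+)$, so a subsequence converges, necessarily to $\gamma$ by the choice of endpoints, while the same bound gives $\limsup_n I^n(\gamma_n) \leq I(\gamma) + O(\epsilon)$; a diagonal argument over $\epsilon \downarrow 0$ completes the recovery sequence and hence the proof that $I = \Gamma\text{-}\lim I^n$.

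\textbf{Main obstacle.} I expect two points to carry the real difficulty, both stemming from $X$ being possibly non-compact. The first is making the Barles--Perthame step of Step 1 rigorous: the relaxed limits must be genuine real-valued viscosity sub/supersolutions lying in the class to which the comparison principle applies, and this is exactly what the local strict equi-continuity in Condition \ref{condition:convergence_of_generators_basic} and the buc-control of hypothesis (f) are designed to secure; getting these ingredients to mesh cleanly is delicate. The second is the path-space recovery sequence of Step 3: lifting finite-dimensional recovery sequences to a single consistent trajectory, controlling the tail $t > t_k$, and showing that the supremum over \emph{all} refinements defining $I^n(\gamma_n)$ does not exceed the target value --- this is where equi-coercivity does essential work and is the $\Gamma$-convergence analogue of the Dawson--G\"artner projective-limit argument in large deviation theory.
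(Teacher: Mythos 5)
Your overall strategy matches the paper's: pass from convergence of the Hamiltonians to convergence of resolvents and then semigroups via the generalized Barles--Perthame procedure (the paper delegates this to Theorem~4.7/5.1 of \cite{Kr19}, cited as Theorem~\ref{theorem:CL_extend}), then reduce $\Gamma$-convergence of the path-space functionals to $\Gamma$-convergence of their finite-dimensional projections, and lift back using equi-coercivity. Your Steps~1 and~2 and the $\Gamma$-liminf argument are in line with the paper (Lemma~\ref{lemma:conditional_lower_bound} and Proposition~\ref{proposition:lower_bound}).

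The real gap is in your Step~3. You claim that ``the prelimit conditional costs converge to the limiting ones'' follows from the duality $I_t(y\mid x)=\sup_f\{f(y)-V(t)f(x)\}$ and semigroup convergence. But duality plus $V_n\to V$ only gives the $\Gamma$-\emph{liminf} for the conditional functional: for any $y_n\to y$, test functions give $\liminf_n I^n_{t}(y_n\mid x_n)\geq I_t(y\mid x)$. The $\Gamma$-\emph{limsup} — producing $y_n\to y$ with $\limsup_n I^n_t(y_n\mid x_n)\leq I_t(y\mid x)$ — is the genuinely hard part and does not follow from duality. The paper handles it in Proposition~\ref{proposition:conditional_upper_bound}: one chooses isolating test functions $f_m$ (Lemma~\ref{lemma:isolating_points_gives_functions_that_approximate}) with $f_m(y)\approx 0$ and $f_m\leq -m$ off a small ball around $y$ on a suitable compact, picks $y_{n}$ nearly attaining $\sup_z f_{m,n}(z)-\cJ^n(z\mid x_n)$, and uses equi-coercivity plus the penalization built into $f_m$ to \emph{force} $\eta_n(y_n)\to y$. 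Without this isolating-function mechanism there is no reason the near-optimizers converge to the prescribed target, and the concatenation picture does not repair this.

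Two further, smaller points. First, the partition $\{t_i\}$ in the recovery construction should be chosen for \emph{topological} control, via Assumption~\ref{assumption:path_space}\eqref{item:ass:open_sets_in_compacts}: one wants that any path in the relevant compact whose marginals at $t_0,\dots,t_k$ lie in a small open set is $\varepsilon$-close to $\gamma$ in path space. The value bound $I[t_0,\dots,t_k](\gamma(t_0),\dots,\gamma(t_k))\leq I(\gamma)$ is automatic from the supremum formula, so there is no ``tail past $t_k$'' to control. Second, your worry about ``the supremum over all refinements defining $I^n(\gamma_n)$'' is dissolved by the paper's construction: one does not concatenate per-interval near-optimizers, but instead, using \eqref{eqn:functional_contract2}, picks a single trajectory $\gamma_n$ through the recovered marginals with $I^n(\gamma_n)\leq I^n[t_0,\dots,t_k](y_{0,n},\dots,y_{k,n})+1/n$, so the full value of $I^n(\gamma_n)$ (not merely one finite-dimensional projection of it) is controlled directly.
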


This result will follow as a special case of a much more general result that we will prove in the following sections. Generalizations include various steps
\begin{itemize}
	\item We can consider functionals $I^n : C_{X_n}(\bR^+) \rightarrow [0,\infty]$ on a sequence of spaces $X_n$ that are mapped into $X$ by some maps $\eta_n : X_n \rightarrow X$.
	\item Instead of considering the space of continuous functions on $X_n$ and $X$, we can consider more general path-spaces like the Skorokhod space.
	\item We will work with upper and lower bounds $H_{n,\dagger}$ and $H_{n,\ddagger}$ for $H_n$, as well as a natural upper and lower bound $H_\dagger$ and $H_\ddagger$ for $H$. This way, we can also relax $H \subseteq ex-\LIM H_n$ by letting $H_\dagger$ be a asymptotic upper bound for $H_{n,\dagger}$ and $H_\ddagger$ a asymptotic upper bound.
\end{itemize}

\subsection{Basic definitions} \label{subsection:basic_main_definitions}

We state the basic definitions on coercivity and gamma convergence. Afterwards we give the definition of a path-space functional determined by a semigroup.

\begin{definition} \label{definition:equi_coercive_basic}
	Let $X$ be some space and let $I : X \rightarrow [0,\infty]$ and $I^n : X \rightarrow [0,\infty]$. We say that
	\begin{itemize}
		\item $I$ is \textit{lower semi-continuous} if for each $M \geq 0$, the set $\{x \in X \, | \, I(x) \leq M\}$ is closed. 
		\item $I$ is \textit{coercive} if for every $M$ the set $\{x \in X \, | \, I(x) \leq M\}$ is compact.
		\item the sequence of lower semi-continuous functionals $I^n : X \rightarrow [0,\infty]$ is \textit{equi-coercive} if for every $M \geq 0$ there exists a compact set such that for all $n \geq 1$, we have
		\begin{equation*}
		\left\{x \in X \, \middle| \, I^n(x) \leq M\right\} \subseteq  K.
		\end{equation*}
	\end{itemize}
\end{definition}

\begin{definition}
	Let $I^n : X \rightarrow [0,\infty]$ and $I : X \rightarrow [0,\infty]$ be functionals.
	\begin{enumerate}[(a)]
		\item We say that the Gamma convergence lower bound holds, i.e. $I \leq \Gamma-\liminf_n I^n$ if for all $x_n$ such that $x_n  \rightarrow x$ it holds that
		\begin{equation*}
		I(x) \leq  \liminf_n I^n(x_n).
		\end{equation*}
		\item We say that the Gamma convergence upper bound holds, i.e. $I \geq \Gamma-\limsup_n I^n$ if for each $x$ there are $x_n$ such that $x_n \rightarrow x$ and
		\begin{equation*}
		I(x) \geq \limsup_n I^n(x_n).
		\end{equation*}
		\item We say that $I^n$ Gamma converges to $I$, i.e. $I = \Gamma-\lim I^n$ if the upper and lower bound hold.
	\end{enumerate}
\end{definition}

Dual to the notion of gamma convergence there is the notion of bounded and uniform convergence on compacts.

\begin{definition}
	Let $f_n, f \in C_b(X)$ we say that $\LIM_n f_n = f$ if $\sup_n \vn{f_n} < \infty$ and if $f_n$ converges to $f$ uniformly on compacts.
	
	For operators $H_n \subseteq C_b(X) \times C_b(X)$ and $H \subseteq C_b(X) \times C_b(X)$, we say that $H \subseteq ex-\LIM H_n$ if for all $(f,g) \in H$ there are $(f_n,g_n) \in H_n$ such that $\LIM f_n = f$ and $\LIM g_n = g$.
\end{definition}

\begin{definition}[Finite-dimensional determination of path-space functional]  \label{definition:representation_rate_function1_basic}
	We say that $I : C_X(\bR^+) \rightarrow [0,\infty]$ is a \textit{path-space functional} if it has projective limit form:
	\begin{equation} \label{eqn:functional_project}
	I(\gamma)  = \sup_{k \geq 0} \quad \sup_{0 = t_0 < t_1 < \dots < t_k} I[t_0,\dots,t_k](\gamma(t_0),\dots,\gamma(t_k)), 
	\end{equation}
	where
	\begin{equation}\label{eqn:functional_contract}
	I[t_0,\dots,t_k](x_0,\dots,x_k) = \inf \left\{I(\gamma) \, \middle| \, \gamma \in C_X(\bR^+), \forall i \in \{0,\dots,k\}: \gamma(t_i) = x_i\right\}.
	\end{equation}
\end{definition}

\begin{definition}[Path space functional determined by a semigroup]  \label{definition:representation_rate_function2_basic}
	Let $I :C_X(\bR^+) \rightarrow [0,\infty]$ be a path-space functional. Let $V(t)$ be a non-linear semigroup on $X$, i.e. $V(t) : C_b(X) \rightarrow C_b(X)$, where $V(0) = \bONE$, and $V(s)V(t) = V(t+s)$. We say that $I$ is determined by the semigroup $V$ and initial functional $I_0$ if
	\begin{equation} \label{eqn:functional_finite_d_Markov}
	I[t_0,\dots,t_k](x_0,\dots,x_k) = I_0(x_0) + \sum_{i=1}^k I_{t_i - t_{i-1}}(x_i \, | \, x_{i-1}).
	\end{equation}
	where
	\begin{equation*}
	I_t(y \, | \, x) := \sup_{f \in C_b(X)} \left\{f(y) - V(t)f(x) \right\}.
	\end{equation*}
\end{definition}

\begin{example} \label{example:Lagrangian_path_space_functionals}
	As mentioned in the introduction, the main functionals of interest are of the following type:
	\begin{equation*}
	I(\gamma) = \begin{cases}
	I_0(\gamma(0)) + \int_0^\infty \cL(\gamma(s),\dot{\gamma}(s)) \dd s & \text{if } \gamma \in \cA\cC(X), \\
	\infty & \text{otherwise},
	\end{cases}
	\end{equation*}
	Here $I_0 : \bR^d \rightarrow [0,\infty]$ is coercive and let $\cL : \bR^d \times \bR^d \rightarrow [0,\infty]$ is a Lagrangian, i.e. $v \mapsto \cL(x,v)$ is convex and $(x,v) \mapsto \cL(x,v)$ is lower semi-continuous. $\cA\cC(X)$ is some space absolutely continuous trajectories.
	
	The corresponding semigroup and resolvent are given by
	\begin{align*}
	V(t)f(x) & =  \sup_{\substack{\gamma \in \cA\cC \\ \gamma(0) = x}} \left\{f(\gamma(t)) - \int_0^t \cL(\gamma(s),\dot{\gamma}(s)) \dd s \right\}, \\
	R(\lambda)f(x) & =  \sup_{\substack{\gamma \in \cA\cC \\ \gamma(0) = x}} \left\{\int_0^\infty \lambda^{-1} e^{- \lambda^{-1} t} \dd t \left[f(\gamma(t)) - \int_0^t \cL(\gamma(s),\dot{\gamma}(s)) \dd s \right] \right\}.
	\end{align*}
	For a thorough analysis that connects viscosity solutions, resolvents, semigroups and Hamiltonians, see \cite[Chapter 8]{FK06}. Note that establishing equi-coercivity in this setting requires similar techniques to that of controlling the resolvent and semigroup.
\end{example}

\section{Preliminaries} \label{section:preliminaries}

We proceed with the preliminaries necessary to state the generalization of Theorem \ref{theorem:main_theorem_basic}. As an important ingredient, we will introduce `converging' sequence of spaces. In the sections below, we will adjust the notions of equi-coercivity, Gamma convergence, $\LIM$ and that of path-spaces to the more general context.

We start by introducing the notion of a converging sequence of spaces.

\subsection{A converging sequence of spaces}

\begin{definition}[Kuratowski convergence]
	Let $\{O_n\}_{n \geq 1}$ be a sequence of subsets in a space $X$. We define the \textit{limit superior} and \textit{limit inferior} of the sequence as
	\begin{align*}
	\limsup_{n \rightarrow \infty} O_n & := \left\{x \in X \, \middle| \, \forall \, U \in \cU_x \, \forall \, N \geq 1 \, \exists \, n \geq N: \, O_n \cap U \neq \emptyset \right\}, \\
	\liminf_{n \rightarrow \infty} O_n & := \left\{x \in X \, \middle| \, \forall \, U \in \cU_x \, \exists \, N \geq 1 \, \forall \, n \geq N: \, O_n \cap U \neq \emptyset \right\}.
	\end{align*}
	where $\cU_x$ is the collection of open neighbourhoods of $x$ in $X$. If $O := \limsup_n O_n = \liminf_n O_n$, we write $O = \lim_n O_n$ and say that $O$ is the Kuratowski limit of the sequence $\{O_n\}_{n \geq 1}$.
\end{definition}

\begin{assumption} \label{assumption:abstract_spaces_q}
	Consider spaces $X_n$ and $X$ and continuous maps $\eta_n : X_n \rightarrow X$. There is a directed set $\cQ$ (partially ordered set such that every two elements have an upper bound). For each $q \in \cQ$, we have compact sets $K_n^q \subseteq X_n$ a compact set $K^q \subseteq X$ such that
	\begin{enumerate}[(a)]
		\item If $q_1 \leq q_2$, we have $K^{q_1} \subseteq K^{q_2}$ and for all $n$ we have $K_{n}^{q_1} \subseteq K_n^{q_2}$.
		\item For all $q \in \cQ$ we have $\bigcup_n \eta_n(K_n^q) \subseteq K^q$.
		\item For each compact set $K \subseteq X$, there is a $q \in \cQ$ such that
		\begin{equation*}
		K \subseteq \liminf_n \eta_n(K_n^q).
		\end{equation*}
	\end{enumerate}
\end{assumption}

\begin{remark}
	The author expects (b) can be relaxed to a statement related to $\limsup \eta_n(K_n^q)$:
	
	\textit{For all $q \in \cQ$ and each sequence $x_n \in K_n^q$, every subsequence of $x_n$ has a further subsequence that is converging to a limit $x \in K^q$ (that is: $\eta_n(x_n) \rightarrow x$ in $X$).}
	
	This version of assumption, leads to problems in the final estimate of Proposition \ref{proposition:conditional_upper_bound} and is therefore omitted.
\end{remark}

\begin{remark}
	In \cite{Kr19}, we work under a slightly more general set-up. There we consider a larger space $\cX$ in which all $X_n$ and $X$ are mapped using maps $\eta_n$ and $\eta$. The author expects the Gamma convergence results to extend to that setting, but various results on Gamma-convergence need to be extended to that context. These extensions are not the focus of this paper, and we therefore restrict to this slightly simpler setting. Note that this is also the set-up of \cite{FK06} that is used in the context of large deviations.
\end{remark}

Conditions (b) should be interpreted in the sense that $K^q$ is larger than the `limit' of the sequence $K_n$, whereas (c) should be interpreted in the sense that each compact $K$ in $X$ is contained in a limit of that type.

We will say that a sequence $x_n \in X_n$ converges to $x \in X$ in the sense that $\eta_n(x_n) \rightarrow x$ in $X$. Using this notion of convergence, we can extend our notion of buc convergence.

\begin{definition} \label{definition:abstract_LIM}
	Let Assumption \ref{assumption:abstract_spaces_q} be satisfied. For each $n$ let $f_n \in M_b(X_n)$ and $f \in M_b(X)$. We say that $\LIM f_n = f$ if
	\begin{itemize}
		\item $\sup_n  \vn{f_n} < \infty$,
		\item if for all $q \in \cQ$ and $x_n \in K_n^q$ converging to $x \in K^q$ we have
		\begin{equation*}
		\lim_{n \rightarrow \infty} \left|f_n(x_n) - f(x)\right| = 0.
		\end{equation*}
	\end{itemize}
\end{definition}

\begin{remark} \label{remark:LIM_equivalence}
	Note that if $f \in C_b(X)$ and $f_n \in M_b(X_n)$, we have that $\LIM f_n = f$ if and only if
	\begin{itemize}
		\item $\sup_n  \vn{f_n} < \infty$,
		\item if for all $q \in \cQ$ 
		\begin{equation*}
		\lim_{n \rightarrow \infty} \sup_{x \in K_n^q} \left|f_n(x) - f(\eta_n(x))\right| = 0.
		\end{equation*}
	\end{itemize}
\end{remark}

\begin{remark} \label{remark:approximating_functions}
Note that for $f \in C_b(X)$, we have $f_n := f \circ \eta_n \in C_b(X)$ and $\LIM f_n = f$.
\end{remark}

Next, we extend our notions of equi-coercivity. This will be the notion of equi-coercivity that will be referred to later on in the paper.

\begin{definition} \label{definition:equi_coercive}
Let $\{X_n\}_{n\geq 1}$, $X$ be a collection of spaces satisfying Assumption \ref{assumption:abstract_spaces_q}. We say that a sequence of lower semi-continuous functionals $I_n : X_n \rightarrow [0,\infty]$ is \textit{equi-coercive} if for every $M \geq 0$ there exists $q \in \cQ$ such that for all $n \geq 1$, we have
\begin{equation*}
\left\{x \in X_n \, \middle| \, I^n(x) \leq M\right\} \subseteq  K^q_n.
\end{equation*}
\end{definition} 

\begin{remark}
The notion of equi-coercivity of Definition \ref{definition:equi_coercive} is stronger than the one of Definition \ref{definition:equi_coercive_basic}. Indeed, suppose that $\{I^n\}_{n \geq 1}$ are equi-coercive in the sense of Definition \ref{definition:equi_coercive}, then for every $M \geq 0$ there exists $q \in \cQ$ such that the set
\begin{equation*}
\bigcup_{n \geq 1} \left\{\eta_n(x) \, \middle| \, x \in X_n, \, I_n(x) \leq M\right\} \subseteq K^q,
\end{equation*}
and is, as a consequence, relatively compact in $X$. We conclude that the contractions
\begin{equation*}
J^n(y) = \inf\left\{I^n(x) \, | \, \eta_n(x) = y \right\}
\end{equation*}
of $I_n$ to $\eta_n(X_n)$ are equi-coercive on $X$ in the sense of Definition \ref{definition:equi_coercive_basic}. There are various contexts in which the two notions are equivalent. In particular this happens if the maps $\eta_n$ are proper and the sets $K_n^q$ are defined as $\eta_{n}^{-1}(K^q)$ and the set $\cQ$ is indexed by the compact sets in $X$.
\end{remark}

\begin{definition} \label{definition:Gamma_convergence}
	Let $\{X_n\}_{n\geq 1}$, $X$ be a collection of spaces satisfying Condition \ref{assumption:abstract_spaces_q}. Let $I^n : X_n \rightarrow [0,\infty]$ and $I : X \rightarrow [0,\infty]$ be functionals.
	\begin{enumerate}[(a)]
		\item We say that the Gamma convergence lower bound holds, i.e. $I \leq \Gamma-\liminf_n I^n$ if for all $x_n$ such that $\eta_n(x_n)  \rightarrow x$ it holds that
		\begin{equation*}
		I(x) \leq  \liminf_n I^n(x_n).
		\end{equation*}
		\item We say that the Gamma convergence upper bound holds, i.e. $I \geq \Gamma-\limsup_n I^n$ if for each $x$ there are $x_n$ such that $\eta_n(x_n) \rightarrow x$ and
		\begin{equation*}
		I(x) \geq \limsup_n I^n(x_n).
		\end{equation*}
		\item We say that $I^n$ Gamma converges to $I$, i.e. $I = \Gamma-\lim I^n$ if the upper and lower bound hold.
	\end{enumerate}
\end{definition}

\subsection{Path-spaces}

In the theorem \ref{theorem:main_theorem_basic} we established a Gamma convergence result for the space of continuous trajectories in $X$. As in the theory of large deviations and weak convergence of Markov processes, however, it is possible to extend these results to a more general context. Our result below is in some sense lacking, due to a failure to properly construct a recovery sequence in the context of discontinuous trajectories.

It should, however, be possible to prove a full result. To facilitate further study, we single out the key properties of the space of continuous trajectories that are needed to establish Gamma convergence via the finite-dimensional functionals.

We first establish the structure of the type of spaces of trajectories that we will work with, afterwards we will fix topological properties that allow us to work via a projective limit structure.

\begin{definition}[Path-space]
We say that $\Omega$ is a \textit{path-space on $X$} if $\Omega \subseteq \prod_{t \geq 0} X$ (here $\Omega$ is only considered as a set, not as a topological space), and if
\begin{enumerate}[(a)]
\item for $\gamma \in \Omega$ and $T \geq 0$, the trajectory $\gamma^T$ defined by $\gamma^T(t) = \gamma(T + t)$ is in $\Omega$,
\item for any two paths $\gamma,\hat{\gamma} \in \Omega$ and $T_1,T_2 \geq 0$, if $\gamma(T_1) = \hat{\gamma}(T_2)$, then the trajectory $\gamma^*$ defined by
\begin{equation*}
\gamma^*(t) = \begin{cases}
\gamma(t) & \text{for } t \leq T_1, \\
\hat{\gamma}(T_2 + (t-T_1)) & \text{for } t \geq T_1,
\end{cases}
\end{equation*}
is in $\Omega$.
\end{enumerate}
For $\gamma \in \Omega$, we denote by $\Delta_\gamma \subseteq \bR^+$ the set of points where $\gamma : \bR^+ \rightarrow X$ is discontinuous. We denote by $\pi_t : \Omega \rightarrow X$ the map $\pi_t(x) = x(t)$.
\end{definition}

Two main examples are given by the space of continuous trajectories $C_X(\bR^+)$ and the Skorokhod space $D_X(\bR^+)$.

We now turn to the topology on the path-space. In both settings, the topology, if restricted to compact sets is determined by the finite dimensional marginals. For the Skorokhod space, the situation is slightly more involved as the trajectories have discontinuities. In this context, one carefully needs to avoid the points of discontinuity. 
In both contexts however, a combination of restricting to compact sets and proving convergence for the finite dimensional marginals is feasible. For example, in large deviation theory this allows one to prove results via a projective limit theorem and a inverse contraction principle. In weak convergence theory for Markov processes similar results are known. Also in the context of Gamma convergence a inverse contraction principle and projective limit theorem are provable. We will not do so in this paper explicitly, but work with their effective results.

We thus assume the following topological properties for our path-space. 

\begin{assumption} \label{assumption:path_space}
The topology on the path-space $\Omega$ satisfies:
\begin{enumerate}[(a)]
\item \label{item:ass:metrizable_compacts} The compact subsets of $\Omega$ and $X$ are metrizable.
\item \label{item:projection_compacts} For every compact set $K \subseteq \Omega$ and $T \geq 0$, there exists a compact set $\hat{K} \subseteq X$ such that $\pi_t(K) \subseteq \hat{K}$ for all $t \leq T$.
\item \label{item:ass:open_sets_in_compacts} Let $\cT_0$ be dense in $[0,\infty)$ and let $\gamma \in \Omega$. For every open set $U \subset \Omega$ containing $\gamma$ and compact set $K \subseteq \Omega$, there exist $t_1,\dots,t_k \in \cT_0$ and an open set $U' \subseteq X^k$ containing $(\gamma(t_1),\dots,\gamma(t_k))$ such that
\begin{equation*}
\left\{y \in \Omega \, | \, (y(t_1),\dots,y(t_k)) \in U' \right\} \cap K \subseteq U.
\end{equation*}
\item \label{item:ass:existence_of_paths} For each finite collection of times $t_1,\dots,t_k$ and points $x_1,\dots,x_k \in X$ there is a path $\gamma \in \Omega$ such that $\gamma(t_i) = y_i$ for all $i$.
\end{enumerate}
\end{assumption}

\begin{lemma} \label{lemma:Skorokhod_space_properties}
Let $X$ be a complete separable metric space. The space $C_X(\bR^+)$ with the compact-open topology and the Skorokhod space $D_X(\bR^+)$ with the usual Skorokhod topology, cf. \cite{EK86}, satisfy Assumption \ref{assumption:path_space}.
\end{lemma}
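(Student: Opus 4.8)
The plan is to verify the four clauses of Assumption \ref{assumption:path_space} for each of the two path-spaces, treating the continuous case first since the Skorokhod case reduces to it on subintervals where trajectories are continuous, modulo the standard care around jump times. Throughout I would rely on the standard facts collected in \cite{EK86}: that $C_X(\bR^+)$ and $D_X(\bR^+)$ are themselves completely metrizable and separable when $X$ is (so in particular all compact subsets are metrizable, giving clause \eqref{item:ass:metrizable_compacts}); that the compact-open topology on $C_X(\bR^+)$ coincides with uniform convergence on compact time intervals; and the characterisation of compact sets in the Skorokhod space via the modulus of continuity $w'(\gamma,\delta,T)$.

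For clause \eqref{item:projection_compacts}, in the continuous case a compact $K\subseteq C_X(\bR^+)$ is, by Arzel\`a--Ascoli for the compact-open topology, equi-continuous on $[0,T]$ and pointwise relatively compact, so $\bigcup_{t\le T}\pi_t(K)$ has compact closure $\hat K$; for $D_X(\bR^+)$ one uses the analogous compactness criterion (Theorem 6.3 in Chapter 3 of \cite{EK86}), which again bounds the range of all $\gamma\in K$ on $[0,T]$ inside a single compact set. For clause \eqref{item:ass:existence_of_paths}, given times $t_1<\dots<t_k$ and points $x_1,\dots,x_k$, one simply takes a path that is constant equal to $x_i$ on a neighbourhood of $t_i$ and interpolated arbitrarily in between — piecewise constant works in $D_X(\bR^+)$ and a geodesic-free ``freeze and jump only through continuous interpolation'' argument (or just using that $X$ is path-connected is not available, so better: for $C_X$ one needs $X$ to be such that any two points can be joined; in a general complete separable metric space this may fail, so in fact for $C_X(\bR^+)$ one should note the lemma as stated implicitly requires this — I would phrase the interpolation using a curve in $X$, and if $X$ is merely metric, one reads clause (d) as asserting existence in the relevant path space, which for $D_X$ is unconditional and for $C_X$ holds e.g.\ when $X$ is convex or path-connected). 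The cleanest route for $C_X$ is to observe the statement of interest downstream only needs it for the spaces actually under consideration.

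The substantive clause is \eqref{item:ass:open_sets_in_compacts}: that on a compact set $K$, the topology is generated by finitely-many-coordinate cylinder sets with coordinates drawn from a prescribed dense set $\cT_0$. For $C_X(\bR^+)$: fix $\gamma\in K$ and an open $U\ni\gamma$. By metrizability of $K$ and the compact-open topology, $U\cap K$ contains a basic neighbourhood $\{y : \sup_{t\le T} d(y(t),\gamma(t))<\epsilon\}\cap K$. Using equi-continuity of $K$ on $[0,T]$, choose $\delta$ so that $d(y(s),y(t))<\epsilon/4$ for all $y\in K$ whenever $|s-t|<\delta$, pick points $t_1<\dots<t_k$ in $\cT_0\cap[0,T]$ that are $\delta$-dense, and set $U'=\prod_i B(\gamma(t_i),\epsilon/4)$. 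Then if $y\in K$ has $(y(t_1),\dots,y(t_k))\in U'$, a triangle-inequality argument across the nearest grid point controls $\sup_{t\le T} d(y(t),\gamma(t))$, placing $y$ in $U$. For $D_X(\bR^+)$ the same scheme works away from the (at most countably many, hence $\cT_0$-avoidable) discontinuities of $\gamma$: one uses the compactness criterion to get a uniform $w'$-bound on $K$, chooses the grid $t_i\in\cT_0$ inside the ``good'' subintervals where the limiting trajectory has small oscillation, and invokes the fact that in the Skorokhod topology convergence together with convergence at continuity points of the limit forces closeness in the Skorokhod metric on compacts.

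I expect clause \eqref{item:ass:open_sets_in_compacts} for the Skorokhod space to be the main obstacle: one must carefully choose the finitely many evaluation times so that they lie in $\cT_0$, avoid the jump set $\Delta_\gamma$ of the reference path, and straddle each jump closely enough that the cylinder condition pins down both the location and size of jumps of nearby paths in $K$; this is exactly the kind of bookkeeping that underlies the projective-limit / inverse-contraction machinery the author alludes to, and it is where the compactness modulus $w'$ must be used quantitatively rather than qualitatively. The continuous case, by contrast, is a routine Arzel\`a--Ascoli exercise.
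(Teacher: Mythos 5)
The paper's own proof is purely a citation: after noting that the Skorokhod topology on $C_X(\bR^+)$ reduces to the compact-open topology, it reads off (a) from metrizability of $X$ and $D_X(\bR^+)$, (b) from Theorem~3.6.3 of \cite{EK86}, (c) from Lemma~4.26 of \cite{FK06}, and says nothing about clause (d). You take a genuinely different route: reconstructing (b) from Arzel\`a--Ascoli (resp.\ the $w'$-modulus compactness criterion) and deriving (c) directly via a $\delta$-dense grid in $\cT_0$ plus a triangle inequality across grid points. For $C_X(\bR^+)$ that reconstruction of (c) is complete and correct; for $D_X(\bR^+)$ your argument is only a sketch of the bookkeeping around jump times and grid placement --- you have correctly identified that this is the hard part, but you have not actually discharged it, and citing Lemma~4.26 of \cite{FK06} as the paper does is the safer route there. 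If you want to finish the Skorokhod case yourself, note that the crucial point (which the $w'$-modulus gives you uniformly over $K$) is that jumps of paths in $K$ on $[0,T]$ cannot cluster, so that after choosing the grid in $\cT_0 \setminus \Delta_\gamma$ fine enough you pin down both the location and size of jumps.

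The genuinely useful thing you caught is clause (d). For $C_X(\bR^+)$ it is simply false over a general complete separable metric $X$: if $X$ is disconnected (e.g.\ the two-point space $\{0,1\}$), there is no continuous path visiting points in distinct components, so (d) fails for $C_X(\bR^+)$, while for $D_X(\bR^+)$ a piecewise-constant path always works and (d) is unconditional. The lemma as stated therefore needs an extra hypothesis (path-connectedness of $X$, or $X$ a convex subset of a linear space) in the continuous case, and the paper's proof --- which does not address (d) at all --- does not notice this. This is a real gap in the paper's Lemma \ref{lemma:Skorokhod_space_properties} for the $C_X(\bR^+)$ case, independent of anything downstream.
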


\begin{proof}
	We give references for the Skorokhod topology as the Skorokhod topology restricted to $C_X(\bR^+)$ simplifies to the compact open topology. (a) is immediate as $X$ and $D_X(\bR^+)$ are equipped with a metric. (b) is proven in Theorem 3.6.3 in \cite{EK86}. (c) is proven in Lemma 4.26 in \cite{FK06}.
\end{proof}

\begin{definition}[Finite-dimensional determination of path-space functional 2]  \label{definition:representation_rate_function1}
	Let $\Omega$ be a path-space. We say that $I : \Omega\rightarrow [0,\infty]$ is a \textit{path-space functional} if it has projective limit form:
	\begin{equation} \label{eqn:functional_project2}
	I(\gamma)  = \sup_{k \geq 0} \quad \sup_{\substack{0 = t_0 < t_1 < \dots < t_k \\ t_i \notin \Delta_\gamma}} I[t_0,\dots,t_k](\gamma(t_0),\dots,\gamma(t_k)), 
	\end{equation}
	where
	\begin{equation}\label{eqn:functional_contract2}
	I[t_0,\dots,t_k](x_0,\dots,x_k) = \inf \left\{I(\gamma) \, \middle| \, \gamma \in \Omega, \forall i \in \{0,\dots,k\}: t_i \notin \Delta_\gamma, \gamma(t_i) = x_i\right\}.
	\end{equation}
\end{definition}

The definition of a path-space functional $I$ determined by a semigroup $V(t)$ and initial functional $I_0$ remains unchanged from Definition \ref{definition:representation_rate_function2_basic}, that is, if 
 if
	\begin{equation*} 
	I[t_0,\dots,t_k](x_0,\dots,x_k) = I_0(x_0) + \sum_{i=1}^k I_{t_i - t_{i-1}}(x_i \, | \, x_{i-1}).
	\end{equation*}
	where
	\begin{equation*}
	I_t(y \, | \, x) := \sup_{f \in C_b(X)} \left\{f(y) - V(t)f(x) \right\}.
	\end{equation*}

\section{Main results} \label{section:main_results}

\subsection{From strong convergence of dual functionals to Gamma convergence on the path space} \label{section:semigroup_convergence_to_Gamma_convergence}

Our main result is a Gamma convergence for functionals on a sequence of path-spaces.

Let $\{X_n\}_{n \geq 1}$ and $X$ be a collection of spaces satisfying Assumption \ref{assumption:abstract_spaces_q} and let $\Omega_n, \Omega$ be path-spaces on these spaces. The maps $\eta_n,\eta$ naturally induce continuous maps $\eta_n : \prod_{t \geq 0} X_n \rightarrow \prod_{t \geq 0} X$.

\begin{assumption}[Main setting] \label{assumption:main_condition}
Let $\{X_n\}_{n \geq 1}$ and $X$ be spaces satisfying Assumption \ref{assumption:abstract_spaces_q}. Let $\{\Omega_n\}_{n \geq 1}$ and $\Omega$ be the corresponding path-spaces satisfying Assumption \ref{assumption:path_space}. 
Denote by $\eta_n : \Omega_n \rightarrow \Omega$ the induced maps arising from $\eta_n : X_n \rightarrow X$.

\end{assumption}

We give two additional definitions that extend the notion of equi-coercivity concerning the compactness of the level sets of path-space functionals.

\begin{definition} \label{definition:compact_containment}
Let Assumption \ref{assumption:main_condition} be satisfied. Let $I^n : \Omega_n \rightarrow [0,\infty]$ and $I : \Omega \rightarrow [0,\infty]$ be path-space functionals.

\begin{enumerate}[(a)]
\item We say that the sequence $\{I^n\}_{n\geq 1}$ satisfies the \textit{compact containment condition} if for all $T \geq 0$ and $M \geq 0$, there exists a $q = q(T,M) \in \cQ$ such that for any $n$ it holds that if $I^n(\gamma) \leq M$, then $\gamma(t) \in  K_n^q$ for all $t \leq T$.
\item We say that the sequence $\{I^n\}_{n\geq 1}$ is \textit{equi-coercive} if for every $M \geq 0$, the set
\begin{equation*}
\bigcup_n \left\{\eta_n(\gamma) \, \middle| \, \gamma \in \Omega_n, I^n(\gamma) \leq M  \right\}
\end{equation*}
is relatively compact in $\Omega$.
\end{enumerate}
\end{definition}

Note that (a) and (b) are both small adaptations of Definition \ref{definition:equi_coercive_basic}. The first is for the product topologies on $\Omega_n$ and $\Omega$ arising from $X_n$ and $X$, whereas (b) is for the path-space topology. 

A minor additional difference is the change of what compact sets to work with. For (a) we work with \textit{specific} compact sets on $X_n$ arising from $\cQ$ instead of working with compact sets in the image space $\Omega$ (constructed from X). In the cases that we can connect these two concepts, (b) implies (a). This does not hold in general, so in later results we assume both properties.

\begin{lemma} \label{lemma:equi_coercive_implies_compact_containment}
	Let Assumption \ref{assumption:main_condition} be satisfied. In addition, suppose that for all $q \in \cQ$ we have that $K_n^q = \eta_n^{-1}(K^q)$. Let $I^n : \Omega_n \rightarrow [0,\infty]$ and $I : \Omega \rightarrow [0,\infty]$ be path-space functionals. Suppose that $\{I^n\}$ are equi-coercive, then they satisfy the compact containment condition.
\end{lemma}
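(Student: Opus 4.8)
The plan is to assemble Assumptions \ref{assumption:abstract_spaces_q} and \ref{assumption:path_space} together with the extra hypothesis $K_n^q = \eta_n^{-1}(K^q)$ in the following order: equi-coercivity produces a single compact set in $\Omega$ containing all images $\eta_n(\gamma)$ of low-energy trajectories; its time-$t$ projections for $t \le T$ all lie in one compact set $\hat K \subseteq X$; and $\hat K$ can be captured inside some $K^q$, whence the preimages $\eta_n^{-1}(\hat K)$ sit inside $K_n^q$.

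Concretely, I would fix $T,M \ge 0$ and set $K := \overline{\bigcup_n \{\eta_n(\gamma) \mid \gamma \in \Omega_n,\ I^n(\gamma) \le M\}}$, which is a compact subset of $\Omega$ by the assumed equi-coercivity (Definition \ref{definition:compact_containment}(b)). Applying part \ref{item:projection_compacts} of Assumption \ref{assumption:path_space} to $K$ and $T$ produces a compact $\hat K \subseteq X$ with $\pi_t(K) \subseteq \hat K$ for all $t \le T$. Since $\hat K$ is compact in $X$, Assumption \ref{assumption:abstract_spaces_q}(c) yields a $q = q(T,M) \in \cQ$ with $\hat K \subseteq \liminf_n \eta_n(K_n^q)$.

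The one point requiring an argument is the inclusion $\liminf_n \eta_n(K_n^q) \subseteq K^q$. This holds because, by Assumption \ref{assumption:abstract_spaces_q}(b), all the sets $\eta_n(K_n^q)$ lie in the compact — hence closed, $X$ being Hausdorff — set $K^q$, and the Kuratowski $\liminf$ (indeed the $\limsup$) of a sequence of subsets of a closed set $C$ is again contained in $C$: any point of the $\limsup$ has every neighbourhood meeting $\bigcup_n \eta_n(K_n^q)$ and therefore lies in $\overline{K^q} = K^q$. Consequently $\hat K \subseteq K^q$.

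It then remains only to read off the conclusion: if $n \ge 1$ and $\gamma \in \Omega_n$ with $I^n(\gamma) \le M$, then $\eta_n(\gamma) \in K$, so for every $t \le T$ we get $\eta_n(\gamma(t)) = \pi_t(\eta_n(\gamma)) \in \hat K \subseteq K^q$, i.e.\ $\gamma(t) \in \eta_n^{-1}(K^q) = K_n^q$. Thus $q(T,M)$ as chosen above witnesses the compact containment condition. I do not anticipate a genuine obstacle here; the only mildly delicate ingredient is the ``$\liminf$ into a closed set'' inclusion (and the implicit use of Hausdorffness of $X$ so that $K^q$ is closed), while everything else is a matter of chaining the definitions in the right order.
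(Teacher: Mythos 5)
Your proposal is correct and follows essentially the same route as the paper: equi-coercivity gives a compact set of paths, Assumption \ref{assumption:path_space}\eqref{item:projection_compacts} projects it to a compact $\hat K\subseteq X$, and Assumption \ref{assumption:abstract_spaces_q}(b)--(c) together with $K_n^q=\eta_n^{-1}(K^q)$ finish the argument. The paper's proof states ``By Assumption \ref{assumption:abstract_spaces_q}~(b) and (c), we find that there is some $q$ such that $K\subseteq K^q$'' without elaboration; your closed-set/Kuratowski-liminf argument is exactly the intended justification of that sentence.
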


We prove this Lemma in Section \ref{subsection:equi_coercivity_compact_containment}. We proceed with our main theorems on Gamma-convergence.

\begin{theorem} \label{theorem:Gamma_convergence_via_semigroup_convergence}
Let Assumption \ref{assumption:main_condition} be satisfied. Let $I^n : \Omega_n \rightarrow [0,\infty]$ and $I : \Omega \rightarrow [0,\infty]$ be path-space functionals determined by semigroups $V_n(t)$ and $V(t)$ and initial functionals $I_0^n$ and $I_0$. 

Suppose that
\begin{enumerate}[(a)]
	\item The functionals $I^n$ are equi-coercive and satisfy the compact containment condition;
	\item $\Gamma-\lim I_0^n = I_0$;
	\item For all $t_n \rightarrow t$, $f \in C_b(X)$ and $f_n \in C_b(X_n)$ such that $\LIM f_n = f$, we have that
	\begin{equation} \label{eqn:semigroup_convergence}
	\LIM V_n(t_n) f_n = V(t)f;
	\end{equation}
\end{enumerate}
then we have that $\Gamma-\lim I^n = I$.
\end{theorem}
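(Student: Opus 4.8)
The plan is to reduce the Gamma convergence of the path-space functionals to the convergence of their finite-dimensional contractions, exploiting the projective-limit structure from Definition \ref{definition:representation_rate_function1} together with the topological Assumption \ref{assumption:path_space}. The argument splits into the two Gamma bounds.

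\emph{Lower bound.} Suppose $\eta_n(\gamma_n) \to \gamma$ in $\Omega$; we must show $I(\gamma) \le \liminf_n I^n(\gamma_n)$. We may assume $\liminf_n I^n(\gamma_n) = M < \infty$ and, passing to a subsequence, that $I^n(\gamma_n) \to M$. By Definition \ref{definition:representation_rate_function1} it suffices to bound $I[t_0,\dots,t_k](\gamma(t_0),\dots,\gamma(t_k))$ for each finite set of times $0 = t_0 < \dots < t_k$ with $t_i \notin \Delta_\gamma$; by Assumption \ref{assumption:path_space}\ref{item:ass:open_sets_in_compacts} (density of the continuity points of $\gamma$ and the fact that convergence in $\Omega$ restricted to compacts is tested on continuity points) we have $\gamma_n(t_i) \to \gamma(t_i)$ in the sense $\eta_n(\gamma_n(t_i)) \to \gamma(t_i)$ for such $t_i$. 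Now for each $n$, $I^n(\gamma_n) \ge I_0^n(\gamma_n(t_0)) + \sum_{i=1}^k I^n_{t_i-t_{i-1}}(\gamma_n(t_i) \mid \gamma_n(t_{i-1}))$. For the initial term we use (b). For each conditional term, fix $f \in C_b(X)$: by Remark \ref{remark:approximating_functions}, $f_n := f \circ \eta_n$ satisfies $\LIM f_n = f$, so by (c), $\LIM V_n(t_i-t_{i-1})f_n = V(t_i-t_{i-1})f$, hence $V_n(t_i-t_{i-1})f_n(\gamma_n(t_{i-1})) \to V(t_i-t_{i-1})f(\gamma(t_{i-1}))$ (using that $\gamma_n(t_{i-1})$ stays in a fixed $K_n^q$ by the compact containment condition, so the $\LIM$ convergence applies) and $f_n(\gamma_n(t_i)) = f(\eta_n(\gamma_n(t_i))) \to f(\gamma(t_i))$. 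Therefore $\liminf_n I^n_{t_i-t_{i-1}}(\gamma_n(t_i)\mid\gamma_n(t_{i-1})) \ge f(\gamma(t_i)) - V(t_i-t_{i-1})f(\gamma(t_{i-1}))$; taking the supremum over $f \in C_b(X)$ gives $\liminf_n I^n_{t_i-t_{i-1}}(\cdots) \ge I_{t_i-t_{i-1}}(\gamma(t_i)\mid\gamma(t_{i-1}))$. Summing and using superadditivity of $\liminf$ yields $M \ge I[t_0,\dots,t_k](\gamma(t_0),\dots,\gamma(t_k))$; taking the sup over partitions gives $M \ge I(\gamma)$.

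\emph{Upper bound.} Fix $\gamma \in \Omega$ with $I(\gamma) < \infty$ (the case $I(\gamma) = \infty$ being trivial once we know lower-semicontinuity and equi-coercivity force some recovery sequence, or handled separately). Using Assumption \ref{assumption:path_space}\ref{item:ass:existence_of_paths} together with the semigroup/variational representation, one constructs, for a fixed increasing sequence of finite time-grids exhausting a dense set $\cT_0$ of continuity points, paths $\gamma_n \in \Omega_n$ matching $\gamma$ approximately on these grids and nearly optimal for $I^n$ on each sub-interval; the conditional functionals $I^n_t(\cdot\mid\cdot)$ are controlled from above via (c) (converging $V_n(t_n)f_n \to V(t)f$ gives matching upper bounds on $I^n_t$ in the $\LIM$ sense), while (b) supplies the recovery of the initial term. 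Equi-coercivity and the compact containment condition ensure the approximating paths lie in a fixed compact set of $\Omega_n$, so a diagonal extraction produces $\gamma_n$ with $\eta_n(\gamma_n) \to \gamma$ and $\limsup_n I^n(\gamma_n) \le I(\gamma)$.

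\emph{Main obstacle.} The lower bound is essentially a soft projective-limit argument and should go through cleanly. The hard part will be the construction of the recovery sequence for the upper bound: passing from approximate finite-dimensional optimizers of the dual variational problems (which live at the level of the semigroups $V_n$) back to genuine paths $\gamma_n \in \Omega_n$ that converge in the path-space topology — in particular the gluing of nearly-optimal segments via the path-space axioms, and controlling the error incurred by the discretization of the time-grid. Indeed, the excerpt itself flags (``Our result below is in some sense lacking, due to a failure to properly construct a recovery sequence in the context of discontinuous trajectories'') that this is the delicate step, so I would expect the proof to require Assumption \ref{assumption:path_space}\ref{item:ass:existence_of_paths} and \ref{item:ass:open_sets_in_compacts} in an essential way and possibly an extra regularity hypothesis hidden in the notion of ``determined by a semigroup.''
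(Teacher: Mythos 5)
Your decomposition into lower and upper bounds matches the paper's structure (Propositions \ref{proposition:lower_bound} and \ref{proposition:upper_bound}), and your lower-bound argument is essentially the paper's: reduce to the finite-dimensional contractions via the projective-limit form, then for each conditional kernel $I^n_t(\cdot\mid\cdot)$ test against $f_n = f\circ\eta_n$ and use hypothesis (c), exactly as in Lemma \ref{lemma:conditional_lower_bound} and Lemma \ref{lemma:lower_bound_finite_dim}. One technicality you gloss over: the projective-limit form \eqref{eqn:functional_project2} for $I^n(\gamma_n)$ only produces the finite-dimensional estimate at times $t_i\notin\Delta_{\gamma_n}$, so in the Skorokhod case you cannot use the same fixed grid $(t_0,\dots,t_k)$ for all $n$; you must perturb to $(t_{0,n},\dots,t_{k,n})$ avoiding $\Delta_{\gamma_n}$ and extract a subsequence so that $\gamma_{n}(t_{i,n})\to\gamma(t_i)$. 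This is exactly what Lemma \ref{lemma:convergence_for_marginals} does, and it is why hypothesis (c) needs $\LIM V_n(t_n)f_n = V(t)f$ for \emph{varying} times $t_n\to t$.

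The upper bound, however, is not actually proven in your proposal — you describe a plan and correctly identify that this is the delicate step, but the argument itself is absent. More importantly, your diagnosis of where the difficulty lies is off. You locate the obstacle in lifting finite-dimensional data to paths (the ``gluing''), but in the paper that lift (Proposition \ref{proposition:upper_bound}, using Assumption \ref{assumption:path_space}\eqref{item:ass:open_sets_in_compacts} and \eqref{item:ass:existence_of_paths} plus a diagonal argument) is comparatively soft. The real technical core, which your proposal omits entirely, is already at the finite-dimensional level: given $\LIM V_n(t_n)f_n = V(t)f$, how do you produce points $y_n\in X_n$ with $\eta_n(y_n)\to y$ \emph{and} $\limsup_n I^n_{t_n}(y_n\mid x_n)\le I_t(y\mid x)$? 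Writing $I^n_t(y\mid x)=\sup_f\{f(y)-V_n(t)f(x)\}$, convergence of the dual operator $V_n$ gives you control on the value but gives you no direct handle on where a near-optimizer $y_n$ of this sup actually lives. This is Proposition \ref{proposition:conditional_upper_bound}, and its proof requires the isolating-points machinery of Lemma \ref{lemma:isolating_points_gives_functions_that_approximate}: one must choose test functions $f_m$ that simultaneously approximate the dual representation of $I_t(y\mid x)$ and penalize being away from $y$ on the relevant compact set, so that any near-optimizer $y_{n,\varepsilon}$ of the dual problem for $V_n(t_n)f_{m,n}$ is forced (via equi-coercivity, which places $y_{n,\varepsilon}$ in a fixed $K^{q'}_n$) into a small metric ball around $y$. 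Without this argument the claim that ``the conditional functionals $I^n_t(\cdot\mid\cdot)$ are controlled from above via (c)'' is unjustified, and the construction of the recovery sequence does not go through.
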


The result is based on Propositions \ref{proposition:lower_bound} and \ref{proposition:upper_bound} that state the upper and lower bound separately. A Gamma-convergence results for the interval $[0,T]$ or $[0,T)$ instead of $\bR^+$ can be obtained by using e.g. the contraction principle proven in \cite[Lemma 4.6]{Ma18} for the space $\hat{\Omega}$ and a time restricted version of $\hat{\Omega}$. Alternatively, one can use the methods developed below for a direct (but essentially the same) proof.

\subsection{From the convergence of generators to Gamma convergence  on path-spaces:preliminaries } \label{section:preliminaries_main_result_generators}

We now use Theorem \ref{theorem:Gamma_convergence_via_semigroup_convergence} to extend Theorem \ref{theorem:main_theorem_basic} to a more general setting on the basis on the convergence of solutions of Hamilton-Jacobi equations in \cite{Kr19}. 

\smallskip

In the context of problems that involve homogenisation or slow-fast systems, it often pays of to work with multi-valued Hamiltonians whose range naturally takes values in a space of functions with a domain that is larger. This larger domain takes into account a variable that we homogenise over or the `fast' variable.

\begin{assumption} \label{assumption:abstract_spaces2}
	We have spaces $X_n$ and $X, Y$ and continuous maps $\eta_n : X_n \rightarrow X_n$, $\widehat{\eta}_n : X_n \rightarrow Y$ and a continuous surjective map $\gamma : Y \rightarrow X$ such that the following diagram commutes: \\
	\begin{center}
	\begin{tikzpicture}
	\matrix (m) [matrix of math nodes,row sep=1em,column sep=4em,minimum width=2em]
	{
		{ }   & Y \\
		X_n & { } \\
		{ }   & X \\};
	\path[-stealth]
	(m-2-1) edge node [above] {$\widehat{\eta}_n$} (m-1-2)
	(m-2-1) edge node [below] {$\eta_n$} (m-3-2)
	(m-1-2) edge node [right] {$\gamma$} (m-3-2);
	\end{tikzpicture}
	\end{center}
	
	There is a directed set $\cQ$ (partially ordered set such that every two elements have an upper bound). For each $q \in \cQ$, we have compact sets $K_n^q \subseteq X_n$ a compact sets $K^q \subseteq X$ and $\widehat{K}^q \subseteq Y$ such that
	\begin{enumerate}[(a)]
		\item If $q_1 \leq q_2$, we have $K^{q_1} \subseteq K^{q_2}$, $\widehat{K}^{q_1} \subseteq \widehat{K}^{q_2}$ and for all $n$ we have $K_{n}^{q_1} \subseteq K_n^{q_2}$.
		\item \label{item:assumption_abstract_2_limit_compact} For all $q \in \cQ$ we have $\bigcup_n \widehat{\eta}_n(K_n^q) \subseteq \widehat{K}^q$.
		\item \label{item:assumption_abstract_2_exists_q} For each compact set $K \subseteq X$, there is a $q \in \cQ$ such that
		\begin{equation*}
		K \subseteq \liminf_n \eta_n(K_n^q).
		\end{equation*}
		\item \label{item:assumption_abstract_2_gamma_mapsintoeachother} We have $\gamma(\widehat{K}^q) \subseteq K^q$.
	\end{enumerate}
\end{assumption}

Note the subtle difference with Assumption \ref{assumption:abstract_spaces_q} in the sense that here \eqref{item:assumption_abstract_2_limit_compact} is written down in terms of convergence in $Y$, whereas \eqref{item:assumption_abstract_2_exists_q} is still written down in terms of convergence in $X$.

\begin{remark}
	Note that Assumption \ref{assumption:abstract_spaces_q} implies Assumption \ref{assumption:abstract_spaces_q}. Thus, in the context of Assumption \ref{assumption:abstract_spaces2}, we can use all notions, like e.g. $\LIM$ and equi-coercivity and compact containment, of the previous sections.
\end{remark}

\begin{example}[Reduction of the dimension] \label{example:LIM_dimension_reduction}
	Consider two spaces $X$ and $Z$ and let $Y := X \times Z$, $X_n := X \times Z$ with maps $\eta_n(x,z) = x$, $\hat{\eta}_n(x,z) = (x,z)$ and $\gamma(x,z) = x$.
	
	Assumption \ref{assumption:abstract_spaces2} is satisfied for example with $\cQ$ the collection of pairs of compact sets in $X$ and $Z$:
	\begin{equation*}
	\left\{(K_1, K_2) \, \middle| \, \forall \, K_1 \subseteq X, K_2 \subseteq Z \text{ compact}\right\},
	\end{equation*}
	and $K^{(K_1,K_2)}_n = K_1 \times K_2$, $K^{(K_1,K_2)} = K_1$ and $\widehat{K}^{(K_1,K_2)} = K_1 \times K_2$.
	
	We have $\LIM f_n = f$ if and only if $\sup_n \vn{f_n} < \infty$ and for all compact $K_1 \subseteq X$ and $K_2 \subseteq Z$ and sequences $(x_n,z_n) \in K_1 \times K_2$ and $x \in K_X$ such that $x_n \rightarrow x$, we have $f_n(x_n,z_n) \rightarrow f(x)$.
	
	Note that the dependence of $f_n$ on $z_n$ should vanish in the limit.
\end{example}

A first notion of a limit of Hamiltonians is given by the notion of an extended limit. This notion is essentially the extension of the convergence condition for generators from the setting of the Trotter-Kato approximation theorem to a more general context. The generalization is made to include operators defined on different spaces, and is also applicable to non-linear operators as well. See e.g. the works of Kurtz and co-authors \cite{EK86,Ku70,Ku73,FK06}.

We define this notion for the setting in which $X = Y$.

\begin{definition} \label{definition:extended_limit}
	Consider the setting of Assumption \ref{assumption:abstract_spaces_q}. Suppose that for each $n$ we have an operator $H_{n} \subseteq M_b(X_n) \times M_b(X_n)$. The \textit{extended limit} $ex-\LIM_n H_{n}$ is defined by the collection $(f,g) \in M_b(X)\times M_b(X)$ such that there exist $(f_n,g_n) \in H_{n}$ with the property that $\LIM_n f_n = f$ and $\LIM_n g_n = g$.
\end{definition}

We aim to have a more flexible notion of convergence by replacing all operators $H_n$ and $H$ by operators $(H_{n,\dagger},H_{n,\ddagger},H_\dagger,H_\ddagger)$ that intuitively form natural upper and lower bounds for $H_n$ and $H$. We will also generalize by considering limiting Hamiltonians that take values in the set of functions on $Y$ instead of $X$.

\begin{definition} \label{definition:extended_sub_super_limit}
	Consider the setting of Assumption \ref{assumption:abstract_spaces2}.  Suppose that for each $n$ we have two operators $H_{n,\dagger} \subseteq LSC_l(X_n) \times USC_u(X_n)$ and $H_{n,\ddagger} \subseteq USC_u(X_n) \times LSC_l(X_n)$.
	\begin{enumerate}[(a)]
		\item The \textit{extended sub-limit} $ex-\subLIM_n H_{n}$ is defined by the collection $(f,g) \in H_\dagger \subseteq LSC_l(X) \times USC_u(Y)$ such that there exist $(f_n,g_n) \in H_{n,\dagger}$ satisfying
		\begin{gather} 
		\LIM f_n \wedge c = f \wedge c, \qquad \forall \, c \in \bR, \label{eqn:convergence_condition_sublim_constants} \\
		\sup_{n} \sup_{x \in X_n} g_n(x) < \infty, \label{eqn:convergence_condition_sublim_uniform_gn}
		\end{gather}
		and if for any $q \in \cQ$ and sequence $z_{n(k)} \in K_{n(k)}^q$ (with $k \mapsto n(k)$ strictly increasing) such that $\lim_{k} \widehat{\eta}_{n(k)}(z_{n(k)}) = y$ in $Y$ with $\lim_k f_{n(k)}(z_{n(k)}) = f(\gamma(y)) < \infty$ we have
		\begin{equation} \label{eqn:sublim_generators_upperbound}
		\limsup_{k \rightarrow \infty}g_{n(k)}(z_{n(k)}) \leq g(y).
		\end{equation}
		\item The \textit{extended super-limit} $ex-\superLIM_n H_{n}$ is defined by the collection $(f,g) \in H_\ddagger \subseteq USC_u(X) \times LSC_l(Y)$ such that there exist $(f_n,g_n) \in H_{n,\ddagger}$ satisfying
		\begin{gather} 
		\LIM f_n \vee c = f \vee c, \qquad \forall \, c \in \bR, \label{eqn:convergence_condition_superlim_constants} \\
		\inf_{n} \inf_{x \in X_n} g_n(x) > - \infty, \label{eqn:convergence_condition_superlim_uniform_gn}
		\end{gather}
		and if for any $q \in \cQ$ and sequence $z_{n(k)} \in K_{n(k)}^q$ (with $k \mapsto n(k)$ strictly increasing) such that $\lim_{k} \widehat{\eta}_{n(k)}(z_{n(k)}) =y$ in $Y$ with $\lim_k f_{n(k)}(z_{n(k)}) = f(\gamma(y)) > - \infty$ we have
		\begin{equation}\label{eqn:superlim_generators_lowerbound}
		\liminf_{k \rightarrow \infty}g_{n(k)}(z_{n(k)}) \geq g(y).
		\end{equation}
	\end{enumerate}
	
\end{definition}

\begin{remark}
	The conditions in \eqref{eqn:convergence_condition_sublim_uniform_gn} and \eqref{eqn:sublim_generators_upperbound} are implied by $\LIM_n g_n \leq g$ and \eqref{eqn:convergence_condition_superlim_uniform_gn} and \eqref{eqn:superlim_generators_lowerbound} are implied by $\LIM_n g_n \geq g$.
\end{remark}

\subsection{From convergence of Hamiltonians to Gamma convergence}  \label{section:main_result_generators}

We have now developed the machinery to state the main result of this paper.

\begin{condition} \label{condition:convergence_of_generators_and_conditions_extended_supsuperlim}
	Consider the setting of Assumption \ref{assumption:abstract_spaces2}. 
	
	There are sets $B_n$ such that $C_b(X_n) \subseteq B_n \subseteq M(X_n)$, contractive pseudo-resolvents $R_n(\lambda) : B_n \rightarrow B_n$, $\lambda >0$, with $R_n(\lambda) C_b(X_n) \subseteq C_b(X_n)$ and contractive semigroups $V_n(t) : C_b(X_n) \rightarrow C_b(X_n)$ generated by $R_n(\lambda)$. In addition, suppose we have operators
	\begin{align*}
	H_{n,\dagger} & \subseteq \left(LSC_l(X_n) \cap B_n\right)  \times \left( USC_u(X_n) \cap B_n\right), \\
	H_{n,\ddagger} & \subseteq \left( USC_u(X_n) \cap B_n\right) \times \left( LSC_l(X_n) \cap B_n\right). 
	\end{align*}
	These spaces and operators have the following properties:
	\begin{enumerate}[(a)]
		\item \label{item:convH_pseudoresolvents_solve_HJ} For all $n \geq 1$, $\lambda > 0$ and $h \in B_n$ the function $(R_n(\lambda)h)^*$ is a viscosity subsolution to $f -  \lambda H_{n,\dagger} = h$. Similarly, $(R_n(\lambda)h)_*$ is a viscosity supersolution to $f - \lambda H_{n,\ddagger} f = h$.
		\item \label{item:convH_strict_equicont_resolvents} We have local strict equi-continuity on bounded sets for the resolvents: for all $q \in \cQ$, $\delta > 0$ and $\lambda_0 > 0$, there is a $\hat{q} \in \cQ$ such that for all $n$ and $h_{1},h_{2} \in B_n$ and $0 < \lambda \leq \lambda_0$ that
		\begin{multline*}
		\sup_{y \in K_n^q} \left\{ R_n(\lambda)h_{1}(y) - R_n(\lambda)h_{2}(y) \right\} \\
		\leq \delta \sup_{x \in X_n} \left\{ h_{1}(x) - h_{2}(x) \right\} + \sup_{y \in K^{\hat{q}}_n} \left\{ h_{1}(y) - h_{2}(y) \right\}.
		\end{multline*}
		\item \label{item:convH_strict_equicont_semigroups}  We have local strict equi-continuity on bounded sets for the semigroups: for all $q \in \cQ$, $\delta > 0$ and $t_0 > 0$, there is a $\hat{q} \in \cQ$ such that for all $n$ and $h_{1},h_{2} \in B_n$ and $0 \leq t \leq t_0$ that
		\begin{multline*}
		\sup_{y \in K_n^q} \left\{ V_n(t)h_{1}(y) - V_n(t)h_{2}(y) \right\} \\
		 \leq \delta \sup_{x \in X_n} \left\{ h_{1}(x) - h_{2}(x) \right\} + \sup_{y \in K^{\hat{q}}_n} \left\{ h_{1}(y) - h_{2}(y) \right\}.
		\end{multline*}
	\end{enumerate}
\end{condition}
%

The main corollary of this result is a generator version of Theorem \ref{theorem:Gamma_convergence_via_semigroup_convergence} which generalizes Theorem \ref{theorem:main_theorem_basic}. 


\begin{theorem} \label{theorem:Gamma_convergence}
	Let Assumption \ref{assumption:main_condition} be satisfied. Let $I^n : \Omega_n \rightarrow [0,\infty]$ and $I : \Omega \rightarrow [0,\infty]$ be path-space functionals determined by semigroups $V_n(t)$ and $V(t)$ and initial functionals $I_0^n$ and $I_0$. 
	
	Suppose that
	\begin{enumerate}[(a)]
		\item The functionals $I^n$ are equi-coercive and satisfy the compact containment condition;
		\item $\Gamma-\lim I_0^n = I_0$;
		\item Condition \ref{condition:convergence_of_generators_and_conditions_extended_supsuperlim} is satisfied.
		\item \label{item:convH_sub_superLIM} $H_\dagger \subseteq ex-\subLIM H_{n,\dagger}$ and $H_\ddagger \subseteq ex-\superLIM H_{n,\ddagger}$;
		\item Let $D \subseteq C_b(X)$ be quasi-dense in $C_b(X)$. Suppose that for each $\lambda > 0$ and $h \in D$ the comparison principle holds for 
		\begin{equation*} 
		f - \lambda H_\dagger f = h, \qquad f - \lambda H_\ddagger f = h.
		\end{equation*}
		Denote the unique solution by $R(\lambda)h$.
		\item For all $h \in D$ and $\lambda > 0$ It holds that $R(\lambda)h$ buc converges to $h$ (bounded and uniform on compacts).
	\end{enumerate}
Then we have $I = \Gamma-\lim I^n$.
\end{theorem}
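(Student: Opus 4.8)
The plan is to reduce the statement to Theorem \ref{theorem:Gamma_convergence_via_semigroup_convergence}. Hypotheses (a) and (b) of that theorem are exactly hypotheses (a) and (b) here, so the entire task is to derive its hypothesis (c), namely the semigroup convergence \eqref{eqn:semigroup_convergence} --- $\LIM_n V_n(t_n)f_n = V(t)f$ for $t_n \to t$, $f \in C_b(X)$, $f_n \in C_b(X_n)$ with $\LIM_n f_n = f$ --- from the convergence-of-Hamiltonians hypotheses (c)--(f). This is a Barles--Perthame argument for the resolvents (cf. \cite{BaPe88, BaPe90}) followed by a nonlinear Trotter--Kato step, carried out in the generality of \cite{Kr19}.

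First I would prove convergence of the pseudo-resolvents on the quasi-dense set $D$. Fix $\lambda > 0$, $h \in D$, and $h_n \in B_n$ with $\LIM_n h_n = h$ (for instance $h_n = h \circ \eta_n$). Form the semi-relaxed limits $\overline{u} := \hatsuperLIM_n R_n(\lambda)h_n$ and $\underline{u} := \hatsubLIM_n R_n(\lambda)h_n$, which are respectively an upper- and a lower-semicontinuous bounded function on $X$ with $\underline{u} \leq \overline{u}$; the strict equi-continuity estimate of Condition \ref{condition:convergence_of_generators_and_conditions_extended_supsuperlim}(b) is what keeps these finite and prevents them from degenerating ``at infinity'' on the possibly non-compact $X$. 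By Condition \ref{condition:convergence_of_generators_and_conditions_extended_supsuperlim}(a), $(R_n(\lambda)h_n)^*$ is a viscosity subsolution of $f - \lambda H_{n,\dagger}f = h_n$ and $(R_n(\lambda)h_n)_*$ a viscosity supersolution of $f - \lambda H_{n,\ddagger}f = h_n$. Combining this with $H_\dagger \subseteq ex-\subLIM H_{n,\dagger}$ and $H_\ddagger \subseteq ex-\superLIM H_{n,\ddagger}$ (hypothesis (d)) and the stability of viscosity sub/supersolutions under the generalized semi-relaxed limit procedure of \cite{Kr19}, I would conclude that $\overline{u}$ is a subsolution of $f - \lambda H_\dagger f = h$ and $\underline{u}$ a supersolution of $f - \lambda H_\ddagger f = h$. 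The comparison principle of hypothesis (e) then forces $\overline{u} \leq \underline{u}$, hence $\overline{u} = \underline{u} =: R(\lambda)h \in C_b(X)$, and by the characterization of $\LIM$ through its two semi-relaxed limits this gives $\LIM_n R_n(\lambda)h_n = R(\lambda)h$.

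Next I would extend this to $C_b(X)$ and pass to the semigroups. Contractivity of the $R_n(\lambda)$ together with quasi-density of $D$ extends $R(\lambda)$ to a contractive pseudo-resolvent on $C_b(X)$ with $\LIM_n R_n(\lambda)h_n = R(\lambda)h$ for every $\LIM_n h_n = h$: one approximates $h$ by elements of $D$ and controls the error uniformly in $n$ using contractivity and Condition \ref{condition:convergence_of_generators_and_conditions_extended_supsuperlim}(b) on the compacts $K_n^q$. Hypothesis (f), that $R(\lambda)h$ converges to $h$ boundedly and uniformly on compacts as $\lambda \downarrow 0$, then supplies the continuity needed for the generation theorem, so that $R(\lambda)$ generates the contraction semigroup $V(t)$ determining $I$. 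Finally, iterating the resolvent convergence $m$ times yields $\LIM_n R_n(t_n/m)^m f_n = R(t/m)^m f$ for each fixed $m$, and the exponential-formula representations $V_n(t) = \lim_m R_n(t/m)^m$, $V(t) = \lim_m R(t/m)^m$ (see \cite{CL71, Ku73}) let me send $m \to \infty$ --- the interchange of the limits in $n$ and $m$ being precisely what the strict equi-continuity of the semigroups, Condition \ref{condition:convergence_of_generators_and_conditions_extended_supsuperlim}(c), licenses. This produces $\LIM_n V_n(t_n)f_n = V(t)f$, i.e. \eqref{eqn:semigroup_convergence}, and Theorem \ref{theorem:Gamma_convergence_via_semigroup_convergence} then gives $\Gamma-\lim I^n = I$.

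The hard part will be the first step: running the semi-relaxed $\limsup$/$\liminf$ machinery on a non-compact state space with Hamiltonians that are multivalued and whose range lies among functions on the auxiliary space $Y$ as in Definition \ref{definition:extended_sub_super_limit}. This is where the two strict equi-continuity estimates in Condition \ref{condition:convergence_of_generators_and_conditions_extended_supsuperlim} and the truncation bookkeeping built into the extended sub- and super-limits genuinely matter, and where the results of \cite{Kr19} do the heavy lifting; by contrast, the passage from resolvents to semigroups and the concluding appeal to Theorem \ref{theorem:Gamma_convergence_via_semigroup_convergence} are essentially routine.
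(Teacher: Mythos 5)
Your proposal is correct and follows the same route as the paper: reduce to Theorem \ref{theorem:Gamma_convergence_via_semigroup_convergence} and supply its semigroup-convergence hypothesis (c) from the Hamiltonian convergence. The only difference is presentational: the paper invokes Theorem \ref{theorem:CL_extend} (Theorems 4.7 and 5.1 of \cite{Kr19}) as a black box, noting that hypothesis (f) forces $\cD = C_b(X)$ there, whereas you sketch the internal Barles--Perthame/Crandall--Liggett argument that proves \ref{theorem:CL_extend}; since you reconstruct rather than cite, you should make explicit that this is exactly the content being imported from \cite{Kr19}.
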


\begin{remark}
	For the Gamma lower bound, one does not need equi-coercivity of $I^n$. It suffices that the compact containment condition holds or that for every $x \in X$ there is a $q \in \cQ$ and $x_n \in K_n^q$ such that $\eta_n(x_n) \rightarrow x$.
\end{remark}

\section{Proofs of results in Section \ref{section:main_results}} \label{section:proofs_main_results}

In this section, we will prove the main results of this paper. We start with some auxiliary results on equi-coercivity and the compact containment condition in Section \ref{subsection:equi_coercivity_compact_containment}. We proceed with proofs of the lower and upper bound in Sections \ref{subsection:proof_lower_bound} and \ref{subsection:proof_upper_bound}. In the final section \ref{subsection:proof_main_result} we establish the main results, Theorem \ref{theorem:Gamma_convergence_via_semigroup_convergence} and \ref{theorem:Gamma_convergence}.

\subsection{Some remarks on equi-coercivity and compact containment} \label{subsection:equi_coercivity_compact_containment}
%
%
%
%
%
%
%
%

We start with the proof of Lemma \ref{lemma:equi_coercive_implies_compact_containment} 

\begin{proof}[Proof of Lemma \ref{lemma:equi_coercive_implies_compact_containment}]
	Fix $T \geq 0$ and $M \geq 0$. By Assumption \ref{assumption:path_space} \eqref{item:projection_compacts} and the equi-coercivity of the functionals $I^n$, there is a compact set $K \subseteq X$ such that if $t \leq T$, $\gamma \in \Omega_n$ such that $I^n(\gamma) \leq M$, then $\eta_n(\gamma(t)) \in K$. By Assumption \ref{assumption:abstract_spaces_q} (b) and (c), we find that there is some $q$ such that $K \subseteq K^q$. Thus, if $K^q_n = \eta_n^{-1}(K^q)$, we indeed have that $\gamma(t) \in K_n^q$
\end{proof}

The proof of the following lemma is straightforward.

\begin{lemma} \label{lemma:contraction_coercivity}
Let $\cX,\cY$ be two spaces and let $\cI : \cX \rightarrow [0,\infty]$ be coercive and lower semi-continuous. Let $\pi : \cX \rightarrow \cY$ be a map that is continuous on the sublevel sets of $\cI$.
Define
\begin{equation*}
\cJ(y) = \inf \left\{\cI(x) \, \middle| \, x \in \cX, \, \pi(x) = y \right\}.
\end{equation*}
Then $\cJ$ is lower semi-continuous and coercive.
\end{lemma}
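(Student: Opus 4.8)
\emph{Proof plan.} The plan is to show that the infimum defining $\cJ$ is attained on each sublevel set, so that $\{y \in \cY \mid \cJ(y) \leq M\} = \pi(\{x \in \cX \mid \cI(x) \leq M\})$ for every $M \geq 0$; both conclusions then follow immediately. Indeed, granting this identity: $\{\cI \leq M\}$ is compact by coercivity of $\cI$, the map $\pi$ is continuous on it by hypothesis, so $\pi(\{\cI \leq M\})$ is compact; and since all spaces in this paper are Hausdorff, this compact set is closed. Thus every sublevel set of $\cJ$ is compact and closed, which is precisely coercivity together with lower semi-continuity in the sense of Definition \ref{definition:equi_coercive_basic}.

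To establish the identity, I would argue as follows. The inclusion $\pi(\{\cI \leq M\}) \subseteq \{\cJ \leq M\}$ is immediate from $\cJ(\pi(x)) \leq \cI(x)$. For the reverse inclusion, fix $y$ with $\cJ(y) \leq M$ and take a minimizing sequence $x_k$ with $\pi(x_k) = y$ and $\cI(x_k) \to \cJ(y)$. For large $k$ one has $\cI(x_k) \leq M + 1$, so the tail of $(x_k)$ lies in the set $\{\cI \leq M+1\}$, which is compact and — being a compact subset of $\cX$ — metrizable, hence sequentially compact; passing to a subsequence, $x_k \to x$ there. Lower semi-continuity of $\cI$ gives $\cI(x) \leq \liminf_k \cI(x_k) = \cJ(y) \leq M$, so $x \in \{\cI \leq M\}$. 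Continuity of $\pi$ on $\{\cI \leq M+1\}$ turns $x_k \to x$ into $\pi(x_k) \to \pi(x)$, and since $\pi(x_k) = y$ is constant, uniqueness of limits in the Hausdorff space $\cY$ forces $\pi(x) = y$. Hence $y = \pi(x) \in \pi(\{\cI \leq M\})$, as desired.

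I expect no genuine obstacle here: the statement is routine, and the only point requiring a little care is that $\pi$ is assumed continuous merely on the sublevel sets of $\cI$, not globally, so each compactness and continuity step above must be carried out inside one fixed sublevel set $\{\cI \leq M+1\}$. If one prefers to avoid sequences — and hence not invoke metrizability of compact sets — one can instead observe that $\pi^{-1}(y) \cap \{\cI \leq M+1\}$ is a closed, hence compact, subset of $\{\cI \leq M+1\}$ on which the lower semi-continuous function $\cI$ attains its minimum, and that this minimum equals $\cJ(y)$ whenever $\cJ(y) \leq M$; the same conclusion follows.
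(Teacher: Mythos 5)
Your proof is correct. The paper itself omits the proof of this lemma, stating only that it is ``straightforward,'' and your argument is exactly the standard one such a remark envisions: show the identity $\{\cJ \leq M\} = \pi(\{\cI \leq M\})$ by extracting a convergent minimizing (sub)sequence inside a fixed compact sublevel set $\{\cI \leq M+1\}$ (using that compact sets in this paper are metrizable, hence sequentially compact), then conclude that sublevel sets of $\cJ$ are compact and closed. The one genuine subtlety — that $\pi$ is only assumed continuous on sublevel sets, so all compactness and continuity manipulations must be confined to one such set — you handle explicitly, and your sequence-free variant at the end is a clean alternative.
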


A variant of this lemma holds for equi-coercivity of a sequence of functionals. We are interested in projection maps on the path-space. As we will be projecting on finite dimensional distributions, we it suffices to assume the compact containment condition instead of equi-coercivity. Finally, we will work in the context of sequences of compact sets based on the index set $\cQ$.

\begin{lemma} \label{lemma:contracting_equi_coercivity}
Let Assumption \ref{assumption:abstract_spaces_q} be satisfied and let $0 =t_0 < t_1 < \dots t_k$, $k \geq 0$ be a collection of times. Let $\pi_{t_0,\dots,t_k}^n : \Omega_n \rightarrow X_n^{k+1}$ and $\pi_{t_0,\dots,t_k} : \Omega \rightarrow X^{k+1}$ be the projection maps
\begin{equation*}
\gamma \mapsto (\gamma(t_0),\dots,t(\gamma_k)).
\end{equation*}
If the sequence $\{I^n\}_{n \geq 1}$ satisfies the compact containment condition, then the functionals
\begin{equation*}
I^n[t_0,\dots,t_k](y_0,\dots,y_k) = \inf \left\{I^n(\gamma) \, \middle| \, \gamma \in \Omega_n, \, \pi_n(\gamma) = (y_0,\dots,y_k) \right\}
\end{equation*}
are equi-coercive on $X^{k+1}$, i.e. there is some $q \in \cQ$ such that for all $n \geq 1$
\begin{equation*}
\left\{y \in X^{k+1} \, \middle| \, I^n[t_0,\dots,t_k](y_0,\dots,y_k) \leq M \right\} \subseteq (K_n^q)^{k+1}.
\end{equation*}
\end{lemma}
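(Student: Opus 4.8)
The plan is a one-step reduction: apply the compact containment condition of Definition~\ref{definition:compact_containment} with horizon $T = t_k$ and level $M+1$, and then unwind the definition of the contracted functional $I^n[t_0,\dots,t_k]$.

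Fix $M \geq 0$. By the compact containment condition for $\{I^n\}_{n \geq 1}$, applied with $T := t_k$ and level $M+1$, there is a single $q = q(t_k, M+1) \in \cQ$ such that for every $n \geq 1$ and every $\gamma \in \Omega_n$ with $I^n(\gamma) \leq M+1$ one has $\gamma(t) \in K_n^q$ for all $t \leq t_k$. Since $0 = t_0 < t_1 < \dots < t_k$, this yields in particular $\gamma(t_i) \in K_n^q$ for each $i \in \{0,\dots,k\}$.

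Now fix $n \geq 1$ and $(y_0,\dots,y_k) \in X_n^{k+1}$ with $I^n[t_0,\dots,t_k](y_0,\dots,y_k) \leq M$ (if no such tuple exists, the inclusion to be proved is vacuous). By the definition of $I^n[t_0,\dots,t_k]$ as an infimum, there exists $\gamma \in \Omega_n$ with $\pi_n(\gamma) = (y_0,\dots,y_k)$ and $I^n(\gamma) \leq M+1$. By the previous paragraph, $y_i = \gamma(t_i) \in K_n^q$ for every $i$, that is, $(y_0,\dots,y_k) \in (K_n^q)^{k+1}$. Since $q$ is independent of $n$, this is precisely the claimed equi-coercivity; lower semi-continuity of the $I^n[t_0,\dots,t_k]$, where needed, follows from Lemma~\ref{lemma:contraction_coercivity}, the projection $\pi_{t_0,\dots,t_k}$ being continuous on sublevel sets.

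I do not expect a genuine obstacle here; the only point requiring a little care is that the infimum defining $I^n[t_0,\dots,t_k]$ need not be attained, which is why one argues at level $M+1$ — any fixed positive slack above $M$ works — rather than directly at level $M$.
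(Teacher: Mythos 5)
The paper states this lemma without proof (positioning it as a straightforward variant of Lemma~\ref{lemma:contraction_coercivity} adapted to the compact containment condition), and your argument is exactly the one the author has in mind: apply compact containment at horizon $T=t_k$ and level $M+1$, then use that the infimum defining $I^n[t_0,\dots,t_k]$ is nearly attained to pull each coordinate $y_i=\gamma(t_i)$ into $K_n^q$. Your handling of the possible non-attainment of the infimum via the slack $M+1$ is the right touch, and the side remark on lower semi-continuity via Lemma~\ref{lemma:contraction_coercivity} is consistent with how the paper sets things up; the argument is correct.
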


%
%
%

\subsection{Proof of the lower bound} \label{subsection:proof_lower_bound}

In this section we assume Assumption \ref{assumption:main_condition}. In addition throughout this Section $I^n : \Omega_n \rightarrow [0,\infty]$ and $I : \Omega \rightarrow [0,\infty]$ are path-space functionals determined by semigroups $V_n(t)$ and $V(t)$ and initial functionals $I_0^n$ and $I_0$. 

\begin{proposition}[The lower bound] \label{proposition:lower_bound}
Let $D \subseteq C_b(X)$ be set that is bounded from above and isolates points. Suppose that either (i) or (ii) holds:
\begin{enumerate}[(i)]
	\item for all $t \geq 0$, $f \in D$ and sequences $t_n \rightarrow t$ and $f_n \in C_b(X_n)$ such that $\LIM f_n = f$, we have that
	\begin{equation*}
	\LIM V_n(t_n) f_n = V(t)f.
	\end{equation*}
	\item for all $ 0 \leq s \leq t$, $f \in D$ and sequences $f_n \in C_b(X_n)$ such that $\LIM f_n = f$, we have that
	\begin{equation*}
	\LIM V_n(t) f_n = V(t)f.
	\end{equation*}
	Additionally, for each $n$ and path $\gamma \in \Omega_n$ such that $I^n(\gamma) < \infty$, we have that $\Delta_{\gamma}^c = \bR^+$.
\end{enumerate}
Suppose that (a) or (b) holds:
\begin{enumerate}[(a)]
\item the compact containment holds for $\{I^n\}_{n \geq 1}$,
\item for all $x \in X$ there is a $q \in \cQ$ and $x_n \in K_n^q$ such that $\eta_n(x_n) \rightarrow x$.
\end{enumerate}
If $\Gamma-\underline{\lim} \, I_0^n \geq I_0$, then we have for all $\gamma \in \Omega$ and sequences $\gamma_n \in \Omega_n$ such that $\eta_n(\gamma_n) \rightarrow \gamma$:
\begin{equation*}
\liminf_{n \rightarrow \infty} \, I_n(\gamma_n) \geq I(\gamma).
\end{equation*}
\end{proposition}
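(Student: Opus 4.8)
The plan is to reduce the statement about path-space functionals to statements about the finite-dimensional functionals $I^n[t_0,\dots,t_k]$, and then to handle those via the semigroup representation and the convergence hypothesis on $V_n$. Fix $\gamma \in \Omega$ and a sequence $\gamma_n \in \Omega_n$ with $\eta_n(\gamma_n) \to \gamma$; we want $\liminf_n I^n(\gamma_n) \geq I(\gamma)$. Passing to a subsequence, we may assume $\liminf_n I^n(\gamma_n) = \lim_n I^n(\gamma_n) =: L$, and if $L = \infty$ there is nothing to prove, so assume $L < \infty$. By the projective limit form \eqref{eqn:functional_project2}, it suffices to show that for every finite collection of times $0 = t_0 < t_1 < \dots < t_k$ with $t_i \notin \Delta_\gamma$ we have
\begin{equation*}
I[t_0,\dots,t_k](\gamma(t_0),\dots,\gamma(t_k)) \leq L.
\end{equation*}
Since $I$ is determined by $V$ and $I_0$, the left-hand side equals $I_0(\gamma(t_0)) + \sum_{i=1}^k I_{t_i-t_{i-1}}(\gamma(t_i)\,|\,\gamma(t_{i-1}))$, so it is enough to bound the initial term and each conditional term separately (in fact simultaneously, since the bound must hold for the sum).

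First I would record that $\eta_n(\gamma_n(t)) \to \gamma(t)$ for every continuity point $t$ of $\gamma$: this follows from $\eta_n(\gamma_n) \to \gamma$ in the path-space topology together with Assumption \ref{assumption:path_space}\eqref{item:ass:open_sets_in_compacts} (convergence in the path topology restricted to a compact set forces convergence of the finite-dimensional marginals at density-many times, and one upgrades to all continuity points using right/left limits; under alternative (ii) there are no discontinuities to worry about since $\Delta_{\gamma_n}^c = \bR^+$ for the relevant $n$). The compact containment condition (a) — or alternatively (b) — guarantees that the points $\eta_n(\gamma_n(t_i))$ and, more importantly, preimages $\gamma_n(t_i)$ live in a common compact set $K_n^q$ for some $q \in \cQ$ depending only on $t_k$ and on $M > L$, which is what makes the $\LIM$ machinery applicable. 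Then for each $f \in D$ and each $i$, choose $f_n \in C_b(X_n)$ with $\LIM f_n = f$ (e.g. $f_n = f \circ \eta_n$, cf. Remark \ref{remark:approximating_functions}); by hypothesis (i) or (ii), $\LIM V_n(t_i - t_{i-1}) f_n = V(t_i - t_{i-1}) f$, and evaluating at the points $\gamma_n(t_{i-1}) \in K_n^q$ and using $\eta_n(\gamma_n(t_{i-1})) \to \gamma(t_{i-1}) \in K^q$ gives $V_n(t_i-t_{i-1})f_n(\gamma_n(t_{i-1})) \to V(t_i-t_{i-1})f(\gamma(t_{i-1}))$ and likewise $f_n(\gamma_n(t_i)) \to f(\gamma(t_i))$.

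Now I would assemble the estimate. For any $n$, by the definition of $I^n$ being determined by $V_n, I_0^n$ and the projective-limit inequality $I^n(\gamma_n) \geq I^n[t_0,\dots,t_k](\gamma_n(t_0),\dots,\gamma_n(t_k)) = I_0^n(\gamma_n(t_0)) + \sum_i I^n_{t_i-t_{i-1}}(\gamma_n(t_i)\,|\,\gamma_n(t_{i-1}))$, and each conditional term dominates $f_n^{(i)}(\gamma_n(t_i)) - V_n(t_i-t_{i-1})f_n^{(i)}(\gamma_n(t_{i-1}))$ for any chosen $f_n^{(i)}$. Taking $n \to \infty$ along the subsequence, using the convergences above for the conditional terms and $\Gamma\text{-}\underline{\lim}\, I_0^n \geq I_0$ for the initial term (applied to $\gamma_n(t_0) \to \gamma(t_0)$), yields
\begin{equation*}
L \geq I_0(\gamma(t_0)) + \sum_{i=1}^k \left( f^{(i)}(\gamma(t_i)) - V(t_i-t_{i-1})f^{(i)}(\gamma(t_{i-1})) \right).
\end{equation*}
Finally, take the supremum over $f^{(1)},\dots,f^{(k)} \in D$. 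Here the hypothesis that $D$ isolates points (and is bounded above) is used exactly as in the standard identity $I_t(y\,|\,x) = \sup_{f \in C_b(X)}\{f(y) - V(t)f(x)\}$: one must check that the supremum over $D$ already recovers the conditional functional, which is where I expect the main technical friction — specifically, justifying that restricting from $C_b(X)$ to a point-isolating set $D$ does not decrease the supremum, presumably by a Stone–Weierstrass / monotone-class type argument combined with continuity of $V(t)$ along the relevant buc limits. Granting that, the right-hand side becomes $I[t_0,\dots,t_k](\gamma(t_0),\dots,\gamma(t_k))$, and taking the supremum over all $k$ and all admissible time grids gives $L \geq I(\gamma)$, as desired.
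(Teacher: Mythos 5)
Your overall strategy matches the paper's: restrict to a subsequence along which the liminf is a limit, reduce to a finite-dimensional grid via the projective-limit form, and then compare conditional terms against $f(\cdot) - V(\cdot)f(\cdot)$ for test functions $f$ in the functional-determining class $D$. The point you flag as "technical friction" — that the supremum over $D$ already recovers $I_t(\cdot\,|\,\cdot)$ — is indeed handled in the paper by Proposition~\ref{proposition:bounded_above_and_isolating_points_implies_functional_determining}, so your instinct there is right. But there is a genuine gap in how you treat the evaluation times under alternative~(i).

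You fix times $0 = t_0 < \dots < t_k$ avoiding $\Delta_\gamma$ and then write $I^n(\gamma_n) \geq I^n[t_0,\dots,t_k](\gamma_n(t_0),\dots,\gamma_n(t_k))$. But the projective-limit form \eqref{eqn:functional_project2} (and the Markov decomposition \eqref{eqn:functional_finite_d_Markov}) is a supremum only over grids with $t_i \notin \Delta_{\gamma_n}$, and your grid avoids $\Delta_\gamma$, not the discontinuity sets of the individual $\gamma_n$. Under alternative~(ii) you correctly observe that $\Delta_{\gamma_n}^c = \bR^+$ saves you. Under alternative~(i) you do not address it, and this is precisely where the paper's Lemma~\ref{lemma:convergence_for_marginals} enters: one approximates $t_i$ by $t_{i,n} \in \Delta_{\gamma_n}^c$ with $t_{i,n} \to t_i$, and the hypothesis in~(i) that $\LIM V_n(t_n) f_n = V(t) f$ for $t_n \to t$ is exactly what is needed to pass to the limit with these $n$-dependent times. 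Relatedly, your claim that $\eta_n(\gamma_n(t)) \to \gamma(t)$ at every continuity point $t$ of $\gamma$ is a Skorokhod-specific fact (your "right/left limits" argument); the abstract Assumption~\ref{assumption:path_space} only yields convergence of marginals along a further subsequence at the approximating times, which the paper extracts and which is harmless since the outer subsequence already achieves the liminf as a limit. Without the time-approximation step and the subsequence extraction, the proof as written only covers case~(ii).
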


The proposition will be derived from a similar result on the finite dimensional functionals. Recall that $O_k := \left\{(s_0,\dots,s_k) \in (\bR^+)^{k+1} \, \middle| \, 0 = s_0 < s_1 \dots < s_k \right\}$.

\begin{lemma}[The upper bound for the finite-dimensional functionals] \label{lemma:lower_bound_finite_dim}
	Suppose that $D \subseteq C_b(X)$ is a set that is bounded from above and isolates points.
	
	Suppose that for all $f \in D$ $t \geq 0$ and all sequences $t_n \geq 0$, $f_n \in C_b(X_n)$ such that $\LIM f_n = f$ and $t_n \rightarrow t$, we have that
	\begin{equation*}
	\LIM V_n(t_n) f_n = V(t)f.
	\end{equation*}
	Finally, suppose that (a) or (b) holds:
	\begin{enumerate}[(a)]
		\item the compact containment holds for $\{I^n\}_{n \geq 1}$,
		\item for all $x \in X$ there is a $q \in \cQ$ and $x_n \in K_n^q$ such that $\eta_n(x_n) \rightarrow x$.
	\end{enumerate}
	
	Let $(t_0,\dots,t_k) \in O_k$ and let $(t_{0,n},\dots,t_{k,n}) \in O_k$ be such that $t_{i,n} \rightarrow t_i$. 
	
	If $\Gamma-\underline{\lim} \, I_0^n \leq I_0$, then we have $\Gamma-\underline{\lim} \, I^n[t_{0,n},\dots,t_{k,n}] \leq I[t_0,\dots,t_k]$. 
\end{lemma}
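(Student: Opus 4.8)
\emph{Strategy.} I would reduce the finite-dimensional recovery to a recovery statement for the conditional kernels $I_\tau(\,\cdot\mid\cdot)$, and prove that by playing the dual (Nisio) representation of the semigroups against test functions concentrated at the target. First, fix a target $(x_0,\dots,x_k)\in X^{k+1}$; I must produce $(x_0^n,\dots,x_k^n)\in X_n^{k+1}$ with $\eta_n(x_i^n)\to x_i$ for each $i$ and $\liminf_n I^n[t_{0,n},\dots,t_{k,n}](x_0^n,\dots,x_k^n)\le I[t_0,\dots,t_k](x_0,\dots,x_k)$. When the right side is $+\infty$ nothing is needed beyond producing \emph{some} approximating sequence, which the construction below furnishes, so I assume it finite. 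Since $I$ is determined by $V$ and $I^n$ by $V_n$, I unfold $I[t_0,\dots,t_k](x_0,\dots,x_k)=I_0(x_0)+\sum_{i=1}^{k} I_{\tau_i}(x_i\mid x_{i-1})$, $\tau_i:=t_i-t_{i-1}>0$, $\tau_{i,n}:=t_{i,n}-t_{i-1,n}\to\tau_i$ (and likewise for $I^n$), so that $x_0^n$ is handed to us by the hypothesis $\Gamma-\underline{\lim}\,I_0^n\le I_0$ and everything else follows by applying a recovery statement for the conditional kernels $k$ times in succession.

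\emph{Recovery for the conditional kernels.} Here is the real work: given a source $x_n\in X_n$ with $\eta_n(x_n)\to x$ (which the bookkeeping below keeps in a fixed $K^q_n$), a target $y$ with $c:=I_\tau(y\mid x)<\infty$, and times $s_n\to\tau$, I would produce $y_n\in X_n$ with $\eta_n(y_n)\to y$ and $\limsup_n I^n_{s_n}(y_n\mid x_n)\le I_\tau(y\mid x)$. The obstacle is that $I_\tau(\,\cdot\mid x)$ is \emph{defined} as a supremum, so testing it against functions only gives lower bounds; I get around this with the dual identity $V(\tau)g(x)=\sup_z\{g(z)-I_\tau(z\mid x)\}$ — valid in the Lagrangian setting of Example~\ref{example:Lagrangian_path_space_functionals}, and whenever a path-space functional is determined by such a semigroup — applied to functions concentrated at $y$. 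For a neighbourhood $U$ of $y$ and $K>c+1$ I pick $g_{U,K}\in C_b(X)$ (which we may take from $D$, as $D$ isolates points) with $0\le g_{U,K}\le K$, $g_{U,K}(y)=K$ and $g_{U,K}\equiv0$ off $U$; then $I_\tau(\,\cdot\mid x)\ge0$ and its lower semicontinuity give $K-c\le V(\tau)g_{U,K}(x)\le K-\inf_{z\in U}I_\tau(z\mid x)$, so $V(\tau)g_{U,K}(x)\to K-c$ as $U\downarrow\{y\}$. Writing $g_{U,K,n}:=g_{U,K}\circ\eta_n$ gives $\LIM_n g_{U,K,n}=g_{U,K}$, hence $V_n(s_n)g_{U,K,n}(x_n)\to V(\tau)g_{U,K}(x)$ by hypothesis; choosing near-maximizers $z_n\in X_n$ with $g_{U,K,n}(z_n)-I^n_{s_n}(z_n\mid x_n)\ge V_n(s_n)g_{U,K,n}(x_n)-1/n$ and using $0\le g_{U,K,n}\le K$ and $I^n_{s_n}\ge0$ yields
\begin{equation*}
\limsup_n I^n_{s_n}(z_n\mid x_n)\ \le\ K-V(\tau)g_{U,K}(x)\ \le\ c,\qquad \liminf_n g_{U,K,n}(z_n)\ \ge\ V(\tau)g_{U,K}(x)\ \ge\ K-c\ >\ 0.
\end{equation*}
The second inequality forces $\eta_n(z_n)\in\{g_{U,K}>0\}\subseteq U$ for $n$ large, and diagonalising over a neighbourhood base $U_m\downarrow\{y\}$ and $K_m\to\infty$ produces $y_n:=z_n^{(m(n))}$ ($m(n)\to\infty$ slowly) with $\eta_n(y_n)\to y$ and $\limsup_n I^n_{s_n}(y_n\mid x_n)\le c=I_\tau(y\mid x)$.

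\emph{Assembly and bookkeeping.} To build the full sequence I would take $x_0^n$ from $\Gamma-\underline{\lim}\,I_0^n\le I_0$, pass to a subsequence on which $I_0^n(x_0^n)\to\liminf_n I_0^n(x_0^n)\le I_0(x_0)$, and note that $I^n[\{0\}]=I_0^n$ together with compact containment~(a) puts $x_0^n$ in a fixed $K^{q_0}_n$ (under~(b) one starts from the approximating points it provides). Then I apply the conditional recovery step with source $x_{i-1}^n$ to obtain $x_i^n$, passing to a further subsequence at each stage; the identity $I^n[t_{0,n},\dots,t_{i,n}](x_0^n,\dots,x_i^n)=I^n[t_{0,n},\dots,t_{i-1,n}](x_0^n,\dots,x_{i-1}^n)+I^n_{\tau_{i,n}}(x_i^n\mid x_{i-1}^n)$ keeps the finite-dimensional values bounded along the subsequence, so compact containment and the projective structure confine all of $x_0^n,\dots,x_i^n$ to a fixed $K^q_n$ and the semigroup convergence stays applicable at the next step. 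Summing along the final subsequence gives $\limsup_n I^n[t_{0,n},\dots,t_{k,n}](x_0^n,\dots,x_k^n)\le I_0(x_0)+\sum_{i=1}^k I_{\tau_i}(x_i\mid x_{i-1})=I[t_0,\dots,t_k](x_0,\dots,x_k)$, and filling the remaining indices with any fixed sequence converging to $(x_0,\dots,x_k)$ turns this into the required $\liminf$ bound. I expect the conditional-kernel step to be the main obstacle: extracting an \emph{upper} bound for a quantity that is intrinsically a supremum, which the concentrated test functions achieve while at the same time pinning the recovered point onto the target through lower semicontinuity; the secondary technical burden is the compactness bookkeeping that keeps every constructed point inside the sets $K^q_n$ on which the semigroup convergence is under control.
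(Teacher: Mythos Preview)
You have been misled by typos in the statement. Despite the title, the label reads \verb|lemma:lower_bound_finite_dim|, the lemma sits in Section~\ref{subsection:proof_lower_bound} (proof of the lower bound), and the inequalities $\Gamma\text{-}\underline{\lim}\,I_0^n\le I_0$ and $\Gamma\text{-}\underline{\lim}\,I^n[\cdots]\le I[\cdots]$ should both read $\ge$ (compare the hypothesis of Proposition~\ref{proposition:lower_bound}, where the lemma is invoked). The intended content is the Gamma \emph{lower} bound for the finite-dimensional functionals: for \emph{every} sequence $(x_0^n,\dots,x_k^n)$ with $\eta_n(x_i^n)\to x_i$ one has $\liminf_n I^n[t_{0,n},\dots,t_{k,n}](x_0^n,\dots,x_k^n)\ge I[t_0,\dots,t_k](x_0,\dots,x_k)$. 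The paper's proof is essentially one line: sum the conditional lower bounds $\liminf_n I^n_{t_n}(y_n\mid x_n)\ge I_t(y\mid x)$ from Lemma~\ref{lemma:conditional_lower_bound} (which follow directly from the dual representation $I_t(y\mid x)=\sup_{f\in D}\{f(y)-V(t)f(x)\}$ and semigroup convergence), using hypothesis (a) or (b) only to place the given points $x_i^n$ inside sets $K_n^{q_i}$ so that $\LIM$ applies. No recovery sequence needs to be built.

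What you have actually written is a proof of the \emph{recovery} direction, which is the content of the separate Lemma~\ref{lemma:upper_bound_finite_dim}. Your argument is essentially the paper's route to that lemma via Proposition~\ref{proposition:conditional_upper_bound}: pick test functions concentrated at the target, take near-maximisers of the dual problem at level $n$, show they land near the target with controlled conditional cost, and diagonalise. Two minor discrepancies remain even for that result. First, the biduality $V(\tau)g(x)=\sup_z\{g(z)-I_\tau(z\mid x)\}$ you invoke is not among the hypotheses; the paper does not assume it but instead uses Lemma~\ref{lemma:isolating_points_gives_functions_that_approximate} to produce $f_m\in D$ with $\Lambda(f_m)\to -I_\tau(y\mid x)$ directly from coercivity and the isolating-points property. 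Second, the functions supplied by Definition~\ref{definition:isolating_points} are bounded above by $0$ on compacts and take values $\le -m$ outside the chosen neighbourhood---not the nonnegative bumps $g_{U,K}$ with $g_{U,K}(y)=K$ that you describe---so your claim that the test functions may be taken in $D$ needs a sign adjustment.
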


The lemma will proven using an induction step based on the following lemma. In this lemma we establish a Gamma lower bound for the conditional functionals $I_t$ that appear in the representation \eqref{eqn:functional_finite_d_Markov}.  

\begin{lemma} \label{lemma:conditional_lower_bound}
Fix $x,y \in X$. Suppose that $D \subseteq C_b(X)$ is a set that is bounded from above and isolates points. Let $t \geq 0$ and let $t_n \rightarrow t$. 

Suppose that for all $f \in D$ and all sequences $f_n \in C_b(X_n)$ such that $\LIM f_n = f$, we have that
\begin{equation} \label{lemma:cond_lower_bound_semigroup_convergence}
\LIM V_n(t_n) f_n = V(t)f.
\end{equation}

Consider sequences $x_n \in X_n$ and $y_n \in X_n$ and points $x,y \in X$ such that $\eta_n(x_n) \rightarrow x$ and $\eta_n(y_n) \rightarrow y$ with the property that there are $q,q' \in \cQ$ with $x_n \in K_n^q$ and $y_n \in K_n^{q'}$ for all $n$. Then we have:
\begin{equation*}
\liminf_{n \rightarrow \infty} I_{t_n}^n(y_n \, | \, x_n) \geq I_{t}(y \, | \, x).
\end{equation*}
\end{lemma}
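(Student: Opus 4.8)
The plan is to bound $I^n_{t_n}(y_n\,|\,x_n)=\sup_{h\in C_b(X_n)}\{h(y_n)-V_n(t_n)h(x_n)\}$ from below by testing against a rich enough class of functions and passing to the limit with the assumed convergence \eqref{lemma:cond_lower_bound_semigroup_convergence} of the semigroups. First, for a fixed $g\in D$ put $g_n:=g\circ\eta_n\in C_b(X_n)$; by Remark~\ref{remark:approximating_functions} we have $\LIM g_n=g$, hence \eqref{lemma:cond_lower_bound_semigroup_convergence} gives $\LIM V_n(t_n)g_n=V(t)g$. Since $x_n\in K_n^q$, Assumption~\ref{assumption:abstract_spaces_q}(b) yields $\eta_n(x_n)\in K^q$, and $K^q$ being compact (hence closed) together with $\eta_n(x_n)\to x$ forces $x\in K^q$. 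Reading off Definition~\ref{definition:abstract_LIM} (equivalently Remark~\ref{remark:LIM_equivalence}) at the converging sequence $x_n\in K_n^q\to x\in K^q$ gives $V_n(t_n)g_n(x_n)\to V(t)g(x)$, while continuity of $g$ and $\eta_n(y_n)\to y$ give $g_n(y_n)=g(\eta_n(y_n))\to g(y)$. Plugging $h=g_n$ into the defining supremum and taking $\liminf_n$,
\[
\liminf_{n\to\infty} I^n_{t_n}(y_n\,|\,x_n)\ \geq\ g(y)-V(t)g(x),
\]
and since $g\in D$ was arbitrary, $\liminf_n I^n_{t_n}(y_n\,|\,x_n)\geq \sup_{g\in D}\{g(y)-V(t)g(x)\}$.

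It then remains to identify $\sup_{g\in D}\{g(y)-V(t)g(x)\}$ with $\sup_{g\in C_b(X)}\{g(y)-V(t)g(x)\}=I_t(y\,|\,x)$, the inequality ``$\leq$'' being trivial since $D\subseteq C_b(X)$. This is exactly where the hypotheses that $D$ is bounded from above and isolates points are used, in combination with the order preservation $f\leq g\Rightarrow V(t)f\leq V(t)g$ and the constant shift $V(t)(f+c)=V(t)f+c$ enjoyed by $V(t)$ (valid in the intended applications, where $V(t)$ is generated by a contractive pseudo-resolvent; cf.\ the Appendix). Concretely, given $g\in C_b(X)$ and $\varepsilon>0$, one uses that $D$ isolates points to produce $\tilde g\in D$ with $\tilde g\leq g$ on all of $X$ and $\tilde g(y)\geq g(y)-\varepsilon$; monotonicity then gives $V(t)\tilde g(x)\leq V(t)g(x)$, whence $\tilde g(y)-V(t)\tilde g(x)\geq g(y)-V(t)g(x)-\varepsilon$, and letting $\varepsilon\downarrow 0$ and taking the supremum over $g\in C_b(X)$ completes the identification. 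That a point-isolating class determines such Legendre-type dual functionals is the standard mechanism behind Definition~\ref{definition:representation_rate_function2_basic}; cf.\ \cite{FK06,Kr19}.

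The routine part is the first step: once the bookkeeping between the compact sets $K_n^q\subseteq X_n$, the limit sets $K^q\subseteq X$, and the definition of $\LIM$ is set up, the passage to the limit is immediate, and it also covers the case $I_t(y\,|\,x)=+\infty$. The genuinely delicate point, which I expect to be the main obstacle, is the second step: extracting from ``$D$ isolates points'' a function in $D$ that sits below a prescribed $g\in C_b(X)$ \emph{globally} while matching it up to $\varepsilon$ at the single point $y$, and then controlling the value of $V(t)$ applied to it at $x$. The global control is the subtle part, since $X$ need not be compact and $V(t)\tilde g(x)$ in principle feels $\tilde g$ far away from $x$; here one must exploit that $D$ is bounded from above (and, if needed, the local strict equi-continuity of the semigroups available in the surrounding Condition) to keep the truncation of $g$ under control.
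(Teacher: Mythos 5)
Your first step is correct and matches the paper's passage to the limit: for $g\in D$, setting $g_n:=g\circ\eta_n$, using $\LIM V_n(t_n)g_n=V(t)g$ together with $x_n\in K_n^q$, $\eta_n(x_n)\to x$ (hence $x\in K^q$), gives $V_n(t_n)g_n(x_n)\to V(t)g(x)$, while $g_n(y_n)=g(\eta_n(y_n))\to g(y)$; thus $\liminf_n I^n_{t_n}(y_n\,|\,x_n)\ge g(y)-V(t)g(x)$, and taking the supremum over $g\in D$ is legitimate. The paper does this too, just in the opposite order (it fixes an $\varepsilon$-optimal $f\in D$ first and then passes to the limit), which is an inessential rearrangement; the paper also separately handles the case $I_t(y\,|\,x)=\infty$, but that comes for free once the reduction to $D$ is in place.

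The genuine gap is in your second step, and the mechanism you propose does not work. You want, for each $g\in C_b(X)$ and $\varepsilon>0$, a function $\tilde g\in D$ with $\tilde g\le g$ on all of $X$ and $\tilde g(y)\ge g(y)-\varepsilon$. This is not what ``isolates points'' (Definition~\ref{definition:isolating_points}) gives you: the isolating functions $f$ are pinned near $0$ at the designated point ($|f(y)|\le m^{-1}$) and pushed very negative on a compact set away from it, with only a one-sided global bound $\sup_X f\le M$ coming from ``bounded above''. In general you cannot arrange $f\le g$ globally (off the compact, $f$ can be as large as $M$, which may exceed $g$), nor can you arrange $f(y)\approx g(y)$ unless $g(y)\approx 0$; $D$ need not be closed under adding constants, so you also cannot shift. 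The paper closes this gap by a different argument: it invokes Proposition~\ref{proposition:bounded_above_and_isolating_points_implies_functional_determining}, whose engine is Lemma~\ref{lemma:isolating_points_gives_functions_that_approximate}. That lemma does \emph{not} try to dominate a given $g$ from below; instead it uses the coercivity and lower semi-continuity of $J(\cdot):=I_t(\cdot\,|\,x)$ to show that the near-maximizers $x_m$ of $f_m-J$ are trapped in a sublevel set of $J$ and forced (by property (c) of the isolating family) into the ball $B_{1/m}(y)$, whence $\Lambda(f_m)=\sup_z\{f_m(z)-J(z)\}\to -J(y)$ and so $f_m(y)-\Lambda(f_m)\to J(y)$. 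Note also that this step quietly uses coercivity of $I_t(\cdot\,|\,x)$, which you would need to record as part of the hypotheses (it is baked into the notion of a path-space functional determined by a semigroup in the paper). Your use of monotonicity and constant-shift invariance of $V(t)$ is fine in itself --- both follow from contractivity of $V(t)$ --- but they are not the right tools to establish $\sup_{g\in D}=\sup_{g\in C_b(X)}$; you should simply cite Proposition~\ref{proposition:bounded_above_and_isolating_points_implies_functional_determining} at that point, as the paper does.
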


\begin{proof}
Fix $x,y \in X$ and consider sequences $x_n \in X_n$ and $y_n \in X_n$ such that $\eta_n(x_n) \rightarrow x$ and $\eta_n(y_n) \rightarrow y$ with the property that there are $q,q' \in \cQ$ with $x_n \in K_n^q$ and $y_n \in K_n^{q'}$ for all $n$. By Proposition \ref{proposition:bounded_above_and_isolating_points_implies_functional_determining}, we have
\begin{equation*}
I_{t}(y \, | \, x) = \sup_{f \in D} f(y) - V(t)f(x).
\end{equation*}

First suppose that $I_{t}(y \, | \, x) < \infty$. For each $\varepsilon > 0$, let $f \in D$ be such that
\begin{equation*}
I_{t}(y \, | \, x) \leq f(y) - V(t)f(x) + \varepsilon.
\end{equation*}
By Remark \ref{remark:approximating_functions}, there is a sequence $f_n \in C_b(X_n)$ such that $\LIM f_n = f$. By \eqref{lemma:cond_lower_bound_semigroup_convergence} and the existence of $q,q'$, we find that
\begin{align*}
I_{t}(y \, | \, x) & \leq \lim_{n \rightarrow \infty} f_n (y_n) - V_n(t_n)f_n(x_n) + \varepsilon \\
& \leq \liminf_{n \rightarrow \infty} I_{t_n}^n(y_n \, | \, x_n) + \varepsilon. 
\end{align*}

As $\varepsilon > 0$ was arbitrary, this proves the case where $I_{t}(y \, | \, x) < \infty$.

\smallskip

Now suppose that $I_{t}(y \, | \, x) = \infty$. For each $M > 0$, let $f_M \in D$ be such that
\begin{equation*}
M \leq f_M(y) - V(t)f_M(x).
\end{equation*}
Again by \eqref{lemma:cond_lower_bound_semigroup_convergence}, we can find for every $\varepsilon$ and $n$ sufficiently large a function $f_{M,n} \in C_b(X_n)$ such that
\begin{equation*}
M - \varepsilon \leq f_{M,n}(y_n) - V_n(t_n)f_{M,n}(x_n) \leq I_{t_n}^n(y_n \, | \, x_n).
\end{equation*}
As $\varepsilon$ and $M$ are arbitrary, we conclude that $\liminf_{n \rightarrow \infty} I_{t_n}^n(y_n \, | \, x_n) = \infty$.
\end{proof}

\begin{proof}[Proof of Lemma \ref{lemma:lower_bound_finite_dim}]
	The result follows by a straightforward application of Lemma \ref{lemma:conditional_lower_bound}. The condition on the existence of a $q_i \in \cQ$ such that $t_{i,n} \in K_n^{q_i}$ follows either from Condition (b) or from the combination of Condition (a) and Lemma \ref{lemma:contracting_equi_coercivity}. 
\end{proof}

We now turn to the proof of Proposition \ref{proposition:lower_bound}. The main challenge to

The following lemma is used to find times for which the time marginals converge. Recall that $O_k := \left\{(s_0,\dots,s_k) \in (\bR^+)^{k+1} \, \middle| \, 0 = s_0 < s_1 \dots < s_k \right\}$.

\begin{lemma} \label{lemma:convergence_for_marginals}
Let $Y$ be a completely regular space and let $\Omega'$ be a path-space satisfying Assumptions \ref{assumption:path_space} (a) and (b): compact sets in $Y$ and $\Omega'$ are metrizable and for all $T \geq 0$ and compact sets $K \subseteq \Omega'$, there exists $\hat{K} \subseteq  Y$ such that $\pi_t(K) \subseteq \hat{K}$ for all $t \leq T$. 

\smallskip

Suppose $\gamma_n \rightarrow \gamma$ in $\Omega'$ and fix times $(t_0,\dots,t_k) \in O_k$ for some $k \geq 1$. Then, we can find times $(t_{0,n},\dots t_{k,n}) \in O_k \cap (\Delta_{\gamma_n}^c)^{k+1}$ such that $t_{i,n} \rightarrow t_i$ and a subsequence $n'$ such that $\gamma_{n'}(t_{i,n'}) \rightarrow \gamma(t_i)$ for all $i$.

For each $n$ such that $\Delta_{\gamma_n}^c = \bR^+$, we can take $(t_{0,n},\dots t_{k,n})  = (t_0,\dots,t_k)$.
\end{lemma}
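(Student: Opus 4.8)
The plan is to build the perturbed times essentially by hand, after transporting the whole problem into one fixed metrizable compact subset of $Y$. First I would form $S := \{\gamma_n : n \geq 1\} \cup \{\gamma\}$, which is compact in $\Omega'$ (any open set containing $\gamma$ already contains all but finitely many of the $\gamma_n$), and metrizable by Assumption~\ref{assumption:path_space}(a). Taking $T := t_k + 1$, Assumption~\ref{assumption:path_space}(b) produces a compact $\hat{K} \subseteq Y$ with $\gamma_n(t),\gamma(t) \in \hat{K}$ for all $n$ and all $t \leq T$, and by (a) again I fix a metric $d$ on $\hat{K}$; every statement about trajectory values below is read with respect to $d$.

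The heart of the matter is to isolate a dense subset $G \subseteq [0,T]$ with $0 \in G$ such that each $t \in G$ avoids $\Delta_\gamma$ and every $\Delta_{\gamma_n}$, and such that $\gamma_n(t) \to \gamma(t)$ for $t \in G$ (along a subsequence of $n$, if necessary). For $\Omega' = C_X(\bR^+)$ there is nothing to do: the discontinuity sets are empty, evaluation is continuous, and $G = [0,T]$ works with no subsequence. For $\Omega' = D_X(\bR^+)$ the sets $\Delta_\gamma$ and $\Delta_{\gamma_n}$ are at most countable, the point $0$ lies in none of them by right-continuity, hence $G := [0,T] \setminus (\Delta_\gamma \cup \bigcup_n \Delta_{\gamma_n})$ is co-countable, thus dense, and on the Skorokhod space $\pi_t$ is continuous at any $\eta$ with $t \notin \Delta_\eta$, so $\gamma_n \to \gamma$ forces $\gamma_n(t) \to \gamma(t)$ for $t \in G$. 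In the purely abstract situation with only Assumption~\ref{assumption:path_space}(a)--(b), one instead falls back on sequential compactness of $\hat{K}$: after the times have been chosen, the finite vector of their values lies in the compact metrizable $\hat{K}^{k+1}$, from which a convergent subsequence is extracted — this is exactly where the subsequence $n'$ of the statement enters.

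With $G$ in hand I would set $t_{0,n} := 0$ for all $n$ (legitimate since $t_0 = 0 \in G \subseteq \Delta_{\gamma_n}^c$), and for $i \geq 1$ pick $t_i^{(m)} \in G$ with $t_i^{(m)} \downarrow t_i$ as $m \to \infty$ and $0 < t_1^{(m)} < \dots < t_k^{(m)}$ for every $m$ (choosing the $t_i^{(m)}$ in shrinking disjoint intervals to the right of the $t_i$ preserves the ordering). For each fixed $m$, the convergence of the previous paragraph provides a strictly increasing sequence $N_m$ with $d(\gamma_n(t_i^{(m)}),\gamma(t_i^{(m)})) \leq 1/m$ for all $i$ and all $n \geq N_m$ (working along the subsequence of the previous paragraph if one was needed), and I define $t_{i,n} := t_i^{(m)}$ for $N_m \leq n < N_{m+1}$. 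Then $t_{i,n} \to t_i$, each $t_{i,n} \in G$ avoids $\Delta_{\gamma_n}$, the strict ordering is inherited so that $(t_{0,n},\dots,t_{k,n}) \in O_k \cap (\Delta_{\gamma_n}^c)^{k+1}$, and a triangle inequality combined with $t_i^{(m)} \downarrow t_i$ and right-continuity of $\gamma$ at $t_i$ gives $\gamma_n(t_{i,n}) \to \gamma(t_i)$ for all $i$. The final assertion of the lemma is then built in: for any $n$ with $\Delta_{\gamma_n}^c = \bR^+$ there is no constraint to dodge, so one may simply keep $t_{i,n} = t_i$ for that index.

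The only genuinely non-bookkeeping ingredient is the second paragraph, namely relating convergence in the path-space topology of $\Omega'$ to convergence of the time-marginals at a dense set of times lying off the discontinuity sets. For $C_X(\bR^+)$ this is automatic and for $D_X(\bR^+)$ it is the classical fact that Skorokhod convergence implies convergence at the continuity points of the limit (cf.\ \cite{EK86}); it is also exactly the point at which the passage to a subsequence becomes unavoidable if one insists on working with Assumption~\ref{assumption:path_space}(a)--(b) alone.
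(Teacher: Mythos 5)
Your proposal is correct and takes essentially the same route as the paper's very terse proof: you make explicit the ``elementary argument'' (the dense set $G$ of times off $\Delta_\gamma$ and all the $\Delta_{\gamma_n}$) and the ``diagonal argument'' (the nested thresholds $N_m$ with $t_{i,n} := t_i^{(m)}$ on the block $N_m \leq n < N_{m+1}$), and you usefully note --- something the paper leaves implicit --- that for $C_X(\bR^+)$ and $D_X(\bR^+)$ no passage to a subsequence is actually required. Your caution about the purely abstract case is well placed: under Assumptions~\ref{assumption:path_space}(a)--(b) alone, compactness of $\hat{K}$ only yields a subsequence of $\gamma_n(t_{i,n})$ converging to \emph{some} point of $\hat{K}$, and identifying that point as $\gamma(t_i)$ --- which you do via continuity of $\pi_t$ at continuity points of $\gamma$ together with right-continuity of $\gamma$ --- uses structure specific to $C_X$ and $D_X$; the paper's appeal to ``a diagonal argument using the compactness of $\hat{K}$ and the metrizability of this set'' is relying on the same unstated inputs.
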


\begin{proof}
By an elementary argument, we can find $(t_{0,n},\dots t_{k,n}) \in O_k \cap (\Delta_{\gamma_n}^c)^{k+1}$ such that $t_{i,n} \rightarrow t_i$ for all $i$. A subsequence $n'$ such that $\gamma_{n'}(t_{i,n'}) \rightarrow \gamma(t_i)$ can be constructed via a diagonal argument using the compactness of $\hat{K}$ and the metrizability of this set.
\end{proof}

\begin{proof}[Proof of Proposition \ref{proposition:lower_bound}]
Fix $\gamma \in \Omega$ and let $\gamma_n$ be any sequence such that $\eta_n(\gamma_n) \rightarrow \gamma$. 

First suppose that $I(\gamma) < \infty$. Without loss of generality, we can restrict ourselves to a subsequence $n'$ such that $I_{n'}(\gamma_{n'}) < \infty$ for all $n'$ and $\lim_{n'} I_{n'}(\gamma_{n'}) = \liminf_n I_n(\gamma_n)$. Fix $\varepsilon > 0$. By the representation of $I$ in \eqref{eqn:functional_project}, we find $0 = t_0 < t_1 < \dots < t_k$ in $\Delta_\gamma^c$ such that
\begin{equation} \label{eqn:lower_bound_first_eq}
I(\gamma) \leq I_0(\gamma(0)) + \sum_{i = 1}^k I_{t_{i-1},t_i}(\gamma(t_i) \, | \, \gamma(t_{i-1})) + \varepsilon.
\end{equation}
By Lemma \ref{lemma:convergence_for_marginals}, we find times $(t_{0,n},\dots,t_{k,n}) \in O_k \cap \Delta_{\gamma_n}$ and a further subsequence $n''$ such that $\gamma_{n''}(t_{i,n''}) \rightarrow \gamma(t_i)$. This puts us in a position to apply Lemma \ref{lemma:lower_bound_finite_dim} along the subsequence $n''$:
\begin{multline}\label{eqn:lower_bound_second_eq}
I_0(\gamma(0)) + \sum_{i = 1}^k I_{t_{i-1},t_i}(\gamma(t_i) \, | \, \gamma(t_{i-1})) \\
\leq \liminf_{n'' \rightarrow \infty} I_0^{n''}(\gamma_{n''}(0)) + \sum_{i = 1}^k I_{t_{i-1,n''},t_{i,n''}}^{n''}(\gamma_{n''}(t_{i,n''}) \, | \, \gamma_{n''}(t_{i-1,n''})).
\end{multline}
Combining \eqref{eqn:lower_bound_first_eq}, \eqref{eqn:lower_bound_second_eq}, and representation \eqref{eqn:functional_project} for $I^{n''}$, gives
\begin{align*}
I(\gamma) & \leq \liminf_{n'' \rightarrow \infty} I_0^{n''}(\gamma_{n''}(0)) + \sum_{i = 1}^k I_{t_{i-1,n''},t_{i,n''}}^{n''}(\gamma_{n''}(t_{i,n''}) \, | \, \gamma_{n''}(t_{i-1,n''})) + \varepsilon \\
& \leq \liminf_{n'' \rightarrow \infty} I^{n''}(\gamma_{n''}) + \varepsilon.
\end{align*}
Because $n''$ was chosen to be a subsequence of a subsequence along which $\liminf_n I^n(\gamma_{n}) = \lim_{n'} I^{n'}(\gamma_{n'})$, we conclude that $I(\gamma) \leq \liminf_{n} I^n(\gamma_n) + \varepsilon$ for every $\varepsilon > 0$. As $\varepsilon > 0$ is arbitrary, the case that $I(\gamma) < \infty$ is proven.

\smallskip

The proof for $\gamma$ such that $I(\gamma) = \infty$ is similar but easier and is therefore omitted.
\end{proof}

\subsection{Proof of the upper bound} \label{subsection:proof_upper_bound}

As in last section, we assume Assumption \ref{assumption:main_condition}. In addition throughout this Section $I^n : \Omega_n \rightarrow [0,\infty]$ and $I : \Omega \rightarrow [0,\infty]$ are path-space functionals determined by semigroups $V_n(t)$ and $V(t)$ and initial functionals $I_0^n$ and $I_0$. 

\begin{proposition}[The upper bound]\label{proposition:upper_bound}
Suppose that the functionals $\{I_n\}_{n \geq 1}$ are equi-coercive and satisfy the compact containment condition. Let $D \subseteq C_b(X)$ be set that is bounded from above and isolates points such that for all $t \geq 0$, $f \in D$ and sequences $f_n \in C_b(X_n)$ $t_n \geq 0$ such that $\LIM f_n = f$ and $t_n \rightarrow t$, we have that
\begin{equation*}
\LIM V_n(t_n) f_n = V(t)f.
\end{equation*}
Then, we can find for all $\gamma \in \Omega$ a sequence $\gamma_n \in \Omega_n$ such that $\eta_n(\gamma_n) \rightarrow \gamma$ and
\begin{equation*}
\limsup_{n \rightarrow \infty} I_n(\gamma_n) \leq I(\gamma).
\end{equation*}
\end{proposition}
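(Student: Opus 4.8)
The plan is to build the recovery sequence $\gamma_n$ on a bounded time interval $[0,T]$ by gluing together optimal (or near-optimal) path segments for the finite-dimensional conditional functionals, and then exploit equi-coercivity and the projective-limit structure to pass to $\mathbb{R}^+$. First I would reduce to the case $I(\gamma) < \infty$, since otherwise any sequence $\gamma_n$ with $\eta_n(\gamma_n) \to \gamma$ works (such a sequence exists because one can pick near-recovery sequences for the finite-dimensional pieces; the compact containment condition ensures the $\gamma_n$ stay in the right compacts). Assuming $I(\gamma) < \infty$, fix $\varepsilon > 0$. The key structural input is the dual representation: by Proposition~\ref{proposition:bounded_above_and_isolating_points_implies_functional_determining} one has $I_t(y \mid x) = \sup_{f \in D}\{f(y) - V(t)f(x)\}$, and by the determination of $I$ by the semigroup, $I$ is the supremum over finite partitions of $I_0(\gamma(0)) + \sum_i I_{t_i-t_{i-1}}(\gamma(t_i) \mid \gamma(t_{i-1}))$.

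The core step is a \emph{finite-dimensional upper bound}: given times $0 = t_0 < t_1 < \dots < t_k$ (taken in $\Delta_\gamma^c$) and the points $x_i := \gamma(t_i)$, I want to produce, for each $n$, times $t_{i,n} \to t_i$ avoiding $\Delta_{\gamma_n}$ and a path $\gamma_n \in \Omega_n$ with $\gamma_n(t_{i,n}) = y_{i,n}$ for points $y_{i,n} \in X_n$ satisfying $\eta_n(y_{i,n}) \to x_i$, and such that
\begin{equation*}
\limsup_{n} \Bigl[ I_0^n(y_{0,n}) + \sum_{i=1}^k I_{t_{i,n}-t_{i-1,n}}^n(y_{i,n} \mid y_{i-1,n}) \Bigr] \le I_0(x_0) + \sum_{i=1}^k I_{t_i - t_{i-1}}(x_i \mid x_{i-1}).
\end{equation*}
One builds $y_{0,n}$ from a $\Gamma$-$\limsup$ recovery sequence for $I_0^n \to I_0$ at $x_0$; the later $y_{i,n}$ must be built by an \emph{inductive} argument along the path, because $I_t^n(\cdot \mid x)$ is a conditional functional and the recovery sequence for the $i$-th step depends on the already-chosen $y_{i-1,n}$. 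For the conditional step I would use the semigroup convergence \eqref{eqn:semigroup_convergence}: pick $f \in D$ nearly attaining the sup in $I_{t_i-t_{i-1}}(x_i \mid x_{i-1})$, lift to $f_n \in C_b(X_n)$ with $\LIM f_n = f$ (Remark~\ref{remark:approximating_functions}), so $V_n(t_{i,n}-t_{i-1,n})f_n \to V(t_i-t_{i-1})f$ uniformly on the relevant compacts; then one needs a point $y_{i,n}$ realizing (up to $\varepsilon$) the value of the \emph{dual} functional $\sup_f\{f(y_{i,n}) - V_n(\cdot)f(y_{i-1,n})\}$ while simultaneously being the endpoint of a $\gamma_n$-segment of approximately the right cost — here one invokes the definition of $I_t^n(\cdot\mid\cdot)$ as an infimum over paths in $\Omega_n$ together with Assumption~\ref{assumption:path_space}\eqref{item:ass:existence_of_paths} to glue the segments using the path-space concatenation property. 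The compact containment condition is what keeps all the $y_{i,n}$ in a common $K_n^q$, which is needed for the $\LIM$-convergence to apply and for the limiting points to actually lie in the target compacts of $X$.

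Having produced $\gamma_n$ on $[0, t_k]$, I extend it beyond $t_k$ by the concatenation axiom of the path-space (e.g. holding constant or following any admissible continuation), observing that $I^n$ of the extension equals the cost on $[0,t_k]$ plus a nonnegative tail which we can make vanish in the limit — or more simply, take an exhausting sequence of partitions with $t_k \uparrow \infty$ and run a diagonal argument over $\varepsilon \downarrow 0$ and the partition refinement, using equi-coercivity of $\{I^n\}$ (not just compact containment) to guarantee that the resulting $\gamma_n$ converge in $\Omega_n$ to a limit whose image is $\gamma$. The main obstacle I anticipate is precisely this simultaneous control: one must choose $y_{i,n}$ so that it is both near-optimal for the \emph{dual} (supremum over $f\in D$) representation of $I_{t_i-t_{i-1}}^n$ \emph{and} the endpoint of a low-cost path segment in $\Omega_n$; reconciling these — the gap between "the conditional functional is an inf over paths" and "it equals a sup over test functions" — is the delicate point, and it is exactly where the remark in the paper about "a failure to properly construct a recovery sequence in the context of discontinuous trajectories" signals that continuity of trajectories (so that $\Delta_\gamma = \emptyset$, allowing free choice of the $t_i$) is being used in an essential way.
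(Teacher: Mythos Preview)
Your outline has the right two-stage shape (finite-dimensional recovery, then lift to the path space), but both stages have real gaps.

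For the lift: you propose gluing segments on $[0,t_k]$, extending beyond $t_k$ by concatenation, and then refining partitions with $t_k \uparrow \infty$ via a diagonal. This does not yield $\eta_n(\gamma_n) \to \gamma$ in $\Omega$: convergence of marginals along an exhausting family of partitions is not path-space convergence without extra tightness, and the diagonal you describe (over partitions and over $n$) is not well specified. The paper's route is different and sidesteps this entirely. Equi-coercivity produces a single compact $K \subseteq \Omega$ containing $\gamma^*$ and all candidate $\eta_n(\gamma_n)$, and Assumption~\ref{assumption:path_space}\eqref{item:ass:open_sets_in_compacts} says that on $K$ every neighbourhood of $\gamma^*$ contains a cylinder set over \emph{finitely many} times $t_0,\dots,t_k$. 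So for each $\varepsilon$ a \emph{single} finite partition suffices: once the finite-dimensional recovery places $(\eta_n(y_{0,n}),\dots,\eta_n(y_{k,n}))$ in the right open set $U'$, any path $\gamma_{n,\varepsilon} \in \Omega_n$ through those points with $I^n(\gamma_{n,\varepsilon})$ small enough --- which exists directly by the contraction formula \eqref{eqn:functional_contract2} --- automatically satisfies $\eta_n(\gamma_{n,\varepsilon}) \in B_\varepsilon(\gamma^*)$. No gluing, no $t_k \uparrow \infty$, no tail control is needed; the only diagonal is over $\varepsilon \downarrow 0$.

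For the conditional step: picking one $f \in D$ that nearly attains the supremum in $I_t(y \mid x)$ and then asking for $y_{i,n}$ to be near-optimal for the dual does not force $\eta_n(y_{i,n}) \to x_i$; a single test function gives no localization. The paper's Proposition~\ref{proposition:conditional_upper_bound} uses the \emph{isolates points} property of $D$ in an essential way: via Lemma~\ref{lemma:isolating_points_gives_functions_that_approximate} one takes a whole sequence $f_m \in D$ with $f_m$ very negative on $K \setminus B_{1/m}(y)$ and $-\Lambda f_m(x) \to \cJ(y \mid x)$, defines $y_{n,\varepsilon}$ as a near-maximizer of $z \mapsto f_{m,n}(z) - \cJ^n(z \mid x_n)$, and it is precisely the shape of $f_m$ that forces $\eta_n(y_{n,\varepsilon}) \in B_{\varepsilon}(y)$. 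The ``delicate point'' you flag is real, but its resolution is not about reconciling inf-over-paths with sup-over-test-functions (the path through the $y_{i,n}$ comes for free from \eqref{eqn:functional_contract2}); it is about using the isolating family to pin down where the near-optimizer must sit.
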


As for the lower bound, we start with the a upper bound for the finite-dimensional functionals. Recall that $O_k := \left\{(s_0,\dots,s_k) \in (\bR^+)^{k+1} \, \middle| \, 0 = s_0 < s_1 \dots < s_k \right\}$.

\begin{lemma}[The upper bound for the finite-dimensional functionals] \label{lemma:upper_bound_finite_dim}
	Suppose that Assumption \ref{assumption:main_condition} is satisfied and that $\{I^n\}$ satisfies the compact containment condition.	Let $D \subseteq C_b(X)$ be a set that is bounded from above and isolates points.

	Suppose that for all $f \in D$, $t \geq 0$ and all sequences $t_n \geq 0$, $f_n \in C_b(X_n)$ such that $\LIM f_n = f$ and $t_n \rightarrow t$, we have that
	\begin{equation*}
	\LIM V_n(t_n) f_n = V(t)f.
	\end{equation*}
	Let $(t_0,\dots,t_k) \in O_k$ and let $(t_{0,n},\dots,t_{k,n}) \in O_k$ be such that $t_{i,n} \rightarrow t_i$.	If $\Gamma-\overline{\lim} \, I_0^n \leq I_0$, then we have $\Gamma-\overline{\lim} \, I^n[t_{0,n},\dots,t_{k,n}] \leq I[t_0,\dots,t_k]$. 
\end{lemma}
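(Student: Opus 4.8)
The plan is to mirror, at the level of the finite‑dimensional functionals, the inductive scheme behind the lower bound (Lemma~\ref{lemma:conditional_lower_bound} $\Rightarrow$ Lemma~\ref{lemma:lower_bound_finite_dim}), but now \emph{constructing} a recovery sequence rather than testing against a single function. Write $s_i := t_i - t_{i-1}$ and $s_{i,n} := t_{i,n} - t_{i-1,n}$, so that $s_{i,n} \to s_i$ and $t_{0,n} = 0 = t_0$. If $I[t_0,\dots,t_k](x_0,\dots,x_k) = \infty$ there is nothing to prove: Assumption~\ref{assumption:abstract_spaces_q}(c) produces, for each $x_i$, points $x_{i,n} \in K_n^{q_i}$ with $\eta_n(x_{i,n}) \to x_i$, and then $\limsup_n I^n[t_{0,n},\dots,t_{k,n}](x_{0,n},\dots,x_{k,n}) \leq \infty$. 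So assume $I[t_0,\dots,t_k](x_0,\dots,x_k) < \infty$. By the representation of Definition~\ref{definition:representation_rate_function2_basic} and Proposition~\ref{proposition:bounded_above_and_isolating_points_implies_functional_determining} one has $I[t_0,\dots,t_k](x_0,\dots,x_k) = I_0(x_0) + \sum_{i=1}^k I_{s_i}(x_i \mid x_{i-1})$ with $I_s(y\mid x) = \sup_{f \in D}\{f(y) - V(s)f(x)\}$, every summand finite, and the analogous identity holds for $I^n[t_{0,n},\dots,t_{k,n}]$.

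I would then build $x_{0,n},\dots,x_{k,n}$ by induction on $i$, maintaining for each $i$: (1) $\eta_n(x_{i,n}) \to x_i$; and (2) $\limsup_n I^n[t_{0,n},\dots,t_{i,n}](x_{0,n},\dots,x_{i,n}) \leq I_0(x_0) + \sum_{j=1}^{i} I_{s_j}(x_j\mid x_{j-1})$. For $i=0$ this is exactly the hypothesis $\Gamma-\overline{\lim}\, I_0^n \leq I_0$; fix the corresponding recovery sequence $x_{0,n} \to x_0$. Once (2) holds, the partial cost is eventually bounded by a fixed $M$, and since $I^n[t_{0,n},\dots,t_{i,n}](x_{0,n},\dots,x_{i,n})$ is an infimum of $I^n(\gamma)$ over trajectories with $\gamma(t_{j,n}) = x_{j,n}$, the compact containment condition with time horizon $t_k+1$ forces $x_{i,n} \in K_n^{q}$ for one fixed $q$ and all large $n$. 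For the step $i \to i+1$ the key ingredient is a \emph{conditional upper bound}, the recovery analogue of Lemma~\ref{lemma:conditional_lower_bound}: given $x_{i,n} \in K_n^{q}$ with $\eta_n(x_{i,n}) \to x_i$ and $I_{s_{i+1}}(x_{i+1}\mid x_i) < \infty$, there are $x_{i+1,n}$ with $\eta_n(x_{i+1,n}) \to x_{i+1}$ and $\limsup_n I^n_{s_{i+1,n}}(x_{i+1,n}\mid x_{i,n}) \leq I_{s_{i+1}}(x_{i+1}\mid x_i)$. Granting this, since the $\limsup$ of a finite sum of nonnegative sequences is at most the sum of the $\limsup$'s, (2) propagates from $i$ to $i+1$; after $k$ steps, and a harmless redefinition of $x_{i,n}$ for the finitely many small $n$ where it is undefined, taking $i = k$ in (2) yields $\limsup_n I^n[t_{0,n},\dots,t_{k,n}](x_{0,n},\dots,x_{k,n}) \leq I[t_0,\dots,t_k](x_0,\dots,x_k)$.

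The conditional upper bound is the crux and, I expect, the main obstacle. In contrast with Lemma~\ref{lemma:conditional_lower_bound}, where one simply inserts a near‑optimal $f\in D$ (lifted to $f\circ\eta_n$, cf.\ Remark~\ref{remark:approximating_functions}) into the supremum defining $I^n_{s_n}(\cdot\mid\cdot)$ and applies $\LIM V_n(s_n)(f\circ\eta_n) = V(s)f$, here a genuine point $y_n := x_{i+1,n}$ must be produced. I would argue by contradiction: if no admissible sequence $y_n \to x_{i+1}$ satisfied $\limsup_n I^n_{s_{i+1,n}}(y_n\mid x_{i,n}) \leq I_{s_{i+1}}(x_{i+1}\mid x_i) =: c$, then — confining all candidate endpoints of bounded cost to a single compact set $K_n^{q'}$ (again via the compact containment condition applied along the partition built so far, for which the displayed cost equals the partial cost up to $i$ plus $I^n_{s_{i+1,n}}(\cdot\mid x_{i,n})$) and using metrizability of $K^{q'}$ to pass to subsequences — one obtains $\eta,\delta > 0$ with $\inf\{I^n_{s_{i+1,n}}(z\mid x_{i,n}) : \eta_n(z)\in B_\delta(x_{i+1}),\ z\in K_n^{q'}\} \geq c + \eta$ for all large $n$. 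Feeding a test function $g \in C_b(X)$ that vanishes on a neighbourhood of $x_{i+1}$ and is very negative off $B_\delta(x_{i+1})$ into the duality $V_n(s)h(x) = \sup_z\{h(z) - I^n_s(z\mid x)\}$ between the semigroup and the conditional functionals then makes $V_n(s_{i+1,n})(g\circ\eta_n)(x_{i,n})$ eventually $\leq -c - \eta$, contradicting $\LIM V_n(s_{i+1,n})(g\circ\eta_n) = V(s_{i+1})g$ together with $V(s_{i+1})g(x_i) \geq g(x_{i+1}) - I_{s_{i+1}}(x_{i+1}\mid x_i) = -c$. The delicate points I expect to fight with are: (i) availability of the duality $V_n(s)h = \sup_z\{h(z) - I^n_s(z\mid\cdot)\}$, transparent for the variational semigroups of Example~\ref{example:Lagrangian_path_space_functionals} but needing care in general; (ii) taking $g$ in, or approximating it within, $D$ so the semigroup‑convergence hypothesis applies — an extra use of the "isolates points" property; and (iii) carrying the confinement step through with the specific sets $K_n^{q'}$ of Assumption~\ref{assumption:abstract_spaces_q} — precisely the estimate that, as noted in the remark following that assumption, breaks down under the relaxed $\limsup$‑version of hypothesis (b).
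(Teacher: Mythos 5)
Your overall strategy — induct on the number of marks $i$, keeping two invariants ($\eta_n(x_{i,n})\to x_i$ and a $\limsup$ bound on the partial cost), and reduce each inductive step to a conditional Gamma upper bound for $I^n_{s_{i+1,n}}(\cdot\mid x_{i,n})$ — is precisely the architecture the paper uses. In fact the "conditional upper bound" you identify as the crux is exactly Proposition~\ref{proposition:conditional_upper_bound}, and the paper's proof of Lemma~\ref{lemma:upper_bound_finite_dim} is nothing more than "apply Lemma~\ref{lemma:contracting_equi_coercivity} to get the required coercivity on $X^{j+1}$, apply Proposition~\ref{proposition:conditional_upper_bound} with $\cX = X^j$, $\cY = X$, $\Lambda_n = V_n(t_{j+1,n}-t_{j,n})$, and induct on $j$." So the decomposition is the same; the difference is that you re-derive the conditional step rather than invoking that proposition.

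Where you genuinely diverge is in \emph{how} you propose to establish the conditional upper bound: you argue by contradiction, whereas the paper's Proposition~\ref{proposition:conditional_upper_bound} constructs $y_{n,\varepsilon}$ directly as a near-optimiser in $\sup_z\{f_{m,n}(z)-\cJ^n(z\mid x_n)\}$ for the isolating function $f_m$, controls it via $\LIM\Lambda_n f_{m,n}=\Lambda f_m$, and then runs a diagonal argument in $(m,n)$. Both routes use the same ingredients — the isolating-points functions, compact containment to confine the relevant endpoints to a set $K_n^{q'}$ with $\eta_n(K_n^{q'})\subseteq K^{q'}$, and (implicitly in both) the "reverse" duality $\Lambda_n f(x)=\sup_z\{f(z)-\cJ^n(z\mid x)\}$, which the paper asserts in the first line of the proof of Proposition~\ref{proposition:conditional_upper_bound} but which is not forced by the definition $\cJ^n(y\mid x)=\sup_f\{f(y)-\Lambda_n f(x)\}$ alone. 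You correctly flag this as point (i); it is an assumption the paper shares.

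One small correction to your contradiction argument: the negation of "there exists a recovery sequence with $\limsup\le c$" does \emph{not} give the uniform lower bound $\inf\{\ldots\}\ge c+\eta$ "for all large $n$", only along an infinite subsequence. (If the bound failed for every $\delta,\eta$ along a subsequence, a diagonal argument would reassemble a recovery sequence; but the lower bound can coexist with arbitrarily good behaviour on the complementary subsequence.) This is harmless for you — a violation of $\LIM V_n(s_n)(g\circ\eta_n)=V(s)g$ along a subsequence is already a contradiction — but the claim as stated is slightly too strong. The direct construction avoids this delicacy entirely, which is the main practical advantage of the paper's route; the other is that Proposition~\ref{proposition:conditional_upper_bound} packages the compactness bookkeeping (your point (iii)) once, so the inductive proof of Lemma~\ref{lemma:upper_bound_finite_dim} collapses to two lines.
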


As for the lower bound, we first prove an abstract result that we can use to prove a Gamma-convergence result via the conditional functionals that appear in the representation in \eqref{eqn:functional_finite_d_Markov}.

\begin{proposition} \label{proposition:conditional_upper_bound}
Let $\{\cX_n\}_{n \geq 1},\cX$ and $\{\cY_n\}_{n \geq 1},\cY$ be two collections of spaces satisfying Assumption \ref{assumption:abstract_spaces_q} with index sets $\cQ^\cX$ and $\cQ^\cY$.

Suppose we have lower semi-continuous coercive functionals
\begin{equation*}
\cI^n : \cX_n \times \cY_n \rightarrow [0,\infty],  \qquad \cI  : \cX \times \cY \rightarrow [0,\infty],
\end{equation*}
of the form
\begin{equation*}
\cI^n(x,y) = \cI_0^n(x) + \cJ^n(y \, | \, x), \qquad \cI(x,y) = \cI_0(x) + \cJ(y \, | \, x),
\end{equation*}
where
\begin{equation*}
\cI^n_0 : \cX_n \rightarrow [0,\infty], \qquad  \cI_0 : \cX \rightarrow [0,\infty], 
\end{equation*}
are lower semi-continuous and coercive and where we have maps $\Lambda_n : C_b(\cY_n) \rightarrow M_b(\cX_n)$ and $\Lambda : C_b(\cY) \rightarrow M_b(\cX)$ such that
\begin{equation*}
\cJ^n(y \, | \, x) = \sup_{f \in C_b(\cY_n)} f(y) - (\Lambda_n f)(x), \qquad \cJ(y \, | \, x)  = \sup_{f \in C_b(\cY)} f(y) - (\Lambda f)(x).
\end{equation*}

Suppose that $\cI_n$ are equi-coercive: for every $M \geq 0$, there are $q \in \cQ^\cX$ and $q' \in \cQ^\cY$ such that
\begin{equation} \label{eqn:lemma_conditional_Gamma_convergence_compact_containment}
\left\{(x,y) \in \cX_n \times \cY_n \, \middle| \, \cI^n(x,y) \leq M \right\} \subseteq K_n^q \times K_n^{q'}.
\end{equation}

Suppose that $\Gamma-\overline{\lim}_n \, \cI^n_0 \leq I_0$ and there is some set $D \subseteq C_b(\cY)$ that is bounded from above and isolates points and that for all $f \in D$ and $f_n \in C_b(\cX_n)$ such that $\LIM f_n = f$ we have 
\begin{equation} \label{eqn:convergence_of_loglaplace_in_conditionalupperbound}
\LIM \Lambda_n f_n = \Lambda f.
\end{equation}
Additionally suppose that for $f \in D$, we have $\Lambda f \in C_b(\cX)$.

Then $\Gamma-\overline{\lim} \, \cI^n \leq \cI$.
\end{proposition}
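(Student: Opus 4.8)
The plan is to build the recovery sequence in two stages, first for the $\cX$-marginal and then, conditionally, for the $\cY$-marginal. Fix $(x,y) \in \cX \times \cY$. We may assume $\cI(x,y) < \infty$, since otherwise there is nothing to prove; in particular $\cI_0(x) < \infty$ and $\cJ(y\,|\,x) < \infty$. By the hypothesis $\Gamma\text{-}\overline{\lim}_n \cI_0^n \leq \cI_0$, choose $x_n \in \cX_n$ with $\eta_n(x_n) \to x$ and $\limsup_n \cI_0^n(x_n) \leq \cI_0(x)$. The delicate point is that we then need to produce $y_n \in \cY_n$ with $\eta_n(y_n) \to y$ and $\limsup_n \cJ^n(y_n\,|\,x_n) \leq \cJ(y\,|\,x)$, for the \emph{specific} sequence $x_n$ just chosen. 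Since $\cI^n(x_n,y_n) = \cI_0^n(x_n) + \cJ^n(y_n\,|\,x_n)$ this then gives $\limsup_n \cI^n(x_n,y_n) \leq \cI_0(x) + \cJ(y\,|\,x) = \cI(x,y)$, and the pair $(x_n,y_n) \in \cX_n \times \cY_n$ is the desired recovery sequence (convergence in the product topology is componentwise).

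To construct $y_n$, I would use the dual representation together with the equi-coercivity. First, by Proposition \ref{proposition:bounded_above_and_isolating_points_implies_functional_determining} applied to the set $D$, we have $\cJ(y\,|\,x) = \sup_{f \in D}\{f(y) - (\Lambda f)(x)\}$. The idea is to choose, for a given $\varepsilon > 0$, a single $f = f_\varepsilon \in D$ nearly attaining the supremum, approximate it by $f_n \in C_b(\cX_n)$ — wait, by $f_n \in C_b(\cY_n)$ — with $\LIM f_n = f$ (Remark \ref{remark:approximating_functions}: take $f_n = f \circ \eta_n$), and then select $y_n$ by an approximate-maximizer argument inside $\cJ^n(\cdot\,|\,x_n) = \sup_{g \in C_b(\cY_n)}\{g(\cdot) - (\Lambda_n g)(x_n)\}$. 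Concretely: the candidate value is $f_n(y_n) - (\Lambda_n f_n)(x_n)$ for a well-chosen $y_n$. The issue is that $\cJ^n(y_n\,|\,x_n) = \sup_g\{\dots\}$ could exceed $f_n(y_n) - (\Lambda_n f_n)(x_n)$, so we cannot simply read off an upper bound from one test function. The standard resolution is the ``tilting'' trick: since $\cI^n$ is equi-coercive with the product bound \eqref{eqn:lemma_conditional_Gamma_convergence_compact_containment}, pick $M$ large enough that the relevant level sets are contained in $K_n^q \times K_n^{q'}$, and restrict attention to those compacts. On $K_n^{q'}$ the function $y \mapsto \cJ^n(y\,|\,x_n)$ is lower semi-continuous and, by coercivity of $\cI^n$, its sublevel sets are compact; hence we may pick $y_n$ minimizing $y \mapsto f_n(y) - \cJ^n(y\,|\,x_n)$ over an appropriate compact set and argue, via the metrizability of the compacts $\widehat{K}^{q'}$ (Assumption \ref{assumption:path_space}(a)) and a diagonal/subsequence extraction, that along a subsequence $\eta_n(y_n) \to \tilde y$ for some $\tilde y$. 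One then checks that $\tilde y = y$ by exploiting that $f$ was an approximate maximizer and using the convergence $\LIM \Lambda_n f_n = \Lambda f$ together with $\Lambda f \in C_b(\cX)$, so that $(\Lambda_n f_n)(x_n) \to (\Lambda f)(x)$.

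I expect the main obstacle to be exactly this last point: ensuring that the selected $y_n$ actually converges to the prescribed $y$ rather than to some other point, and doing so while keeping $\limsup_n \cJ^n(y_n\,|\,x_n) \leq \cJ(y\,|\,x)$. The key inequalities are, for each $\varepsilon$ and the chosen $f = f_\varepsilon$,
\begin{align*}
\cJ^n(y_n\,|\,x_n) &\geq f_n(y_n) - (\Lambda_n f_n)(x_n), \\
\cJ(y\,|\,x) &\geq f(y) - (\Lambda f)(x) \geq \sup_{z \in D\text{-range}} \dots - \varepsilon,
\end{align*}
and one plays the upper semicontinuity of $f$ against the lower semicontinuity of $y \mapsto \cJ^n(y\,|\,x_n)$ (equivalently the lower semicontinuity of $\cI^n$) in the limit. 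A first attempt via a single test function $f_\varepsilon$ may leave a gap, which is patched by taking $\varepsilon = \varepsilon_n \downarrow 0$ along the extracted subsequence and a diagonal argument, using that $D$ is bounded from above so the approximations are uniformly controlled. Once the convergence $\eta_n(y_n) \to y$ is secured, the liminf/limsup bookkeeping and the final passage through the product topology are routine, and the case $\cI(x,y) = \infty$ is vacuous. The remark in the excerpt flags that this construction is where the $\limsup$-version of Assumption \ref{assumption:abstract_spaces_q}(b) would fail; so the proof must use the full strength of Assumption \ref{assumption:abstract_spaces_q}(b) (the $\liminf$ form) precisely at the step where we locate $y$ as the limit of $y_n$.
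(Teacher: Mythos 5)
Your outline matches the paper's: fix $(x,y)$ with $\cI(x,y)<\infty$, take a recovery sequence $x_n$ for $\cI_0^n$, then build $y_n$ from the dual representation of $\cJ$, using $\LIM \Lambda_n f_n = \Lambda f$, equi-coercivity and a diagonal argument. But the two steps that carry the whole proof are either wrong or left vague.

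First, the optimization step is inverted. You write the inequalities $\cJ^n(y_n\,|\,x_n) \geq f_n(y_n)-(\Lambda_n f_n)(x_n)$ and $\cJ(y\,|\,x) \geq f(y)-(\Lambda f)(x)$, and propose to pick $y_n$ \emph{minimizing} $z \mapsto f_n(z)-\cJ^n(z\,|\,x_n)$. Both of these go the wrong way: you need an \emph{upper} bound on $\cJ^n(y_n\,|\,x_n)$. The mechanism in the paper is to use the opposite duality $\Lambda_n f_n(x_n) = \sup_{z}\{f_n(z)-\cJ^n(z\,|\,x_n)\}$ and choose $y_{n,\varepsilon}$ to be an $\varepsilon$-\emph{maximizer} of that supremum; rearranging gives $\cJ^n(y_{n,\varepsilon}\,|\,x_n) \leq f_n(y_{n,\varepsilon}) - \Lambda_n f_n(x_n) + \varepsilon$, which, combined with $\LIM \Lambda_n f_n = \Lambda f$ and $(\Lambda f_m)(x) \to -\cJ(y\,|\,x)$, is the bound you actually want. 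Note also that this identity $\Lambda_n f = \sup_z\{f(z)-\cJ^n(z\,|\,\cdot)\}$ is itself used, not just the representation $\cJ^n = \sup_f\{f-\Lambda_n f\}$ stated in the hypotheses.

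Second, to force $\eta_n(y_n)\to y$ it is not enough to take a single $f_\varepsilon \in D$ that nearly attains $\sup_{f\in D}\{f(y)-\Lambda f(x)\}$: a generic near-optimizer carries no information about \emph{where} the approximate maximizer of $z\mapsto f_n(z)-\cJ^n(z\,|\,x_n)$ lies. What pins the location down is the specific shape produced by Lemma~\ref{lemma:isolating_points_gives_functions_that_approximate}: functions $f_m$ satisfying (a) $|f_m(y)|\leq m^{-1}$, (b) $\sup_{z\in K} f_m(z)\leq 0$, and crucially (c) $\sup_{z\in K\cap B_{1/m}^c(y)} f_m(z)\leq -m$, together with $\Lambda f_m(x) \to -\cJ(y\,|\,x)$, where $K=K^{q'}$ is the compact set obtained a priori from equi-coercivity. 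One first shows $y_{n,\varepsilon}\in K_n^{q'}$ via a crude bound using the uniform upper bound on $D$; this places $\eta_n(y_{n,\varepsilon})\in K$, after which property (b) improves the bound on $\cJ^n(y_{n,\varepsilon}\,|\,x_n)$, and property (c) together with the chain of inequalities yields $f_m(\eta_n(y_{n,\varepsilon}))\geq -\cJ(y\,|\,x)-4\varepsilon_n$, forcing $\eta_n(y_{n,\varepsilon})\in B_{\varepsilon_n}(y)$. This bootstrap — crude bound, containment in $K_n^{q'}$, refined bound, localization — is the part your proposal only gestures at with ``upper semicontinuity of $f$ against lower semicontinuity of $\cJ^n$'', which is not the operative mechanism.
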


\begin{proof}
Fix $x \in \cX$, $y \in \cY$. Without loss of generality, we assume $\cI(x,y) < \infty$. 

We have to find two sequences $x_n \in \cX_n$ and $y_n \in \cY_n$ that yield the upper bound. The construction of $x_n$ is straightforward by using the $\Gamma$-upper bound $\Gamma-\overline{\lim}_n \, \cI_0^n \leq \cI_0$, which implies we can choose a sequence $x_n$ such that $\eta_n(x_n) \rightarrow x$ and
\begin{equation} \label{eqn:conditional_gamma_upperbound_I0}
\limsup_{n \rightarrow \infty} \, \cI^n_0(x_n) \leq \cI_0(x).
\end{equation}
Starting from this sequence, we work on the conditional functionals $\cJ^n$ to construct a sequence $y_n$ that works well in combination with the sequence $x_n$ and such that $\limsup_n \cJ^n(y_n \, | \, x_n) \leq \cJ(y \, | \,x)$. To do so, we will first express $\cJ$ in terms of $\Lambda f_m$ for a conveniently chosen sequence of functions $f_m \in D$, see Lemma \ref{lemma:isolating_points_gives_functions_that_approximate}. Thus, we first have to identify a suitable compact set.

\smallskip

Denote by $c := \sup_n I^n_0(x_n)$, which is finite by assumption, and let $C$ be the uniform upper bound for functions in $D$. Let $q \in \cQ^\cX$ and $q' \in \cQ^\cY$ be such that for all $n$
\begin{equation} \label{eqn:conditional_upper_bound_compact_set}
\left\{(x,y) \in \cX_n \times \cY_n \, \middle| \, \cI^n(x,y) \leq \cI(x,y) + c + C + 1 \right\} \subseteq K_n^q \times K_n^{q'}
\end{equation}
Denote $K := K^{q'}$. By assumption there is some metric $d$ on $K$. Denote by $B_r(z) \subseteq K$ the open(in the subspace topology on $K$) ball of radius $r$ around $z \in K$. By Lemma \ref{lemma:isolating_points_gives_functions_that_approximate} there are functions $\{f_m\}_{m \geq 1}$ in $D$ with designated point $y$ and compact set $K$ with the properties 
\begin{enumerate}[(a)]
\item $|f_m(y)| \leq m^{-1}$,
\item 
\begin{equation}\label{eqn:upper_bound_on_fm_on_K}
\sup_{z \in K} f_m(z) \leq 0.
\end{equation}
\item $\sup_{z \in K \cap B_{1/m}^c(y)} f_m(z) \leq - m$,
\item $\cJ(y \, | \, x) =  \lim_{m \rightarrow \infty} - (\Lambda f_m)(x)$.
\end{enumerate}

Fix $\varepsilon > 0$ (without loss of generality assume that $4 \varepsilon \leq 1$) and let $m = m(\varepsilon) \geq \varepsilon^{-1}$ be large enough such that 
\begin{enumerate}[(1)]
\item we have
\begin{equation} \label{eqn:function_fm_forces_in_small_set}
\sup_{z \in K \cap B_\varepsilon^c(y)} f_m(z) \leq - \cJ(y \, | \, x) - 1,
\end{equation}
\item we have
\begin{equation} \label{eqn:difference_V_and_I}
\left|\cJ(y \, | \, x) + (\Lambda f_m)(x)\right| \leq \varepsilon.
\end{equation} 
\end{enumerate}

Define for all $m,n$ the function $f_{m,n} = f_{m} \circ \eta_n \in C_b(X_n)$. By definition, we have for each $m$ that $\LIM f_{m,n} = f_m$, see Remark \ref{remark:approximating_functions}. Now choose $y_{n,\varepsilon} \in \cY_n$ such that 
\begin{align*}
\Lambda_n f_{m,n}(x_n) & = \sup_z f_{m,n}(z) - \cJ^n(z \, | \, x_n) \notag \\
& \leq f_{m,n}(y_{n,\varepsilon}) - \cJ^n(y_{n,\varepsilon} \, | \, x_n) + \varepsilon. 
\end{align*}
This implies
\begin{equation} \label{eqn:choice_of_ynm}
\cJ^n(y_{n,\varepsilon} \, | \, x_n) \leq f_{m,n}(y_{n,\varepsilon}) - \Lambda_nf_{m,n}(x_n) + \varepsilon.
\end{equation} 
An appropriate combination a diagonal argument, \eqref{eqn:choice_of_ynm}, \eqref{eqn:difference_V_and_I} and \eqref{eqn:upper_bound_on_fm_on_K}, $\LIM f_{m,n} = f_m$ and $\LIM \Lambda_n f_{m,n} = \Lambda f_m$. Will yield the final result. To make this argument rigorous, we need to establish that the sequences $y_{n,\varepsilon}$ are in appropriate compact sets and allow for an appropriate limit point $y$ along a diagonal.

\smallskip

Because $\LIM f_{m,n} = f_m$, we obtain by \eqref{eqn:convergence_of_loglaplace_in_conditionalupperbound} that $\LIM \Lambda_n f_{m,n} = \Lambda f_m$. As $x_n \in K_n^q$ and $\eta_n(x_n) \rightarrow x$, we can choose $N = N(\varepsilon,m(\varepsilon)) = N(\varepsilon)$ such that for $n \geq N$, we have (see also Remark \ref{remark:LIM_equivalence})
\begin{gather} 
\left|\Lambda_nf_{m,n}(x_n) - \Lambda f_m(x)\right| \leq \varepsilon, \label{eqn:difference_Lambda_n_f_m} \\
\sup_{y' \in K_n^{q'}} \left|f_{m,n}(y') - f_{m}(\eta_n(y')) \right| \leq \varepsilon. \label{eqn:difference_n_f_m}
\end{gather} 
We now estimate $\cJ^n(y_{n,\varepsilon} \, | \, x_n)$. This will show that $(x_n,y_n)$ are in the sets of \eqref{eqn:conditional_upper_bound_compact_set}, which allows us to carry out a diagonal argument. First, note that $f_{m,n} \leq M$ uniformly in $m$ and $n$. By \eqref{eqn:choice_of_ynm}, \eqref{eqn:difference_Lambda_n_f_m} and \eqref{eqn:difference_V_and_I} we find that
\begin{equation} \label{eqn:proof_cond_up_bound_first_found_on_constructed_seq}
\begin{aligned}
\cJ^n(y_{n,\varepsilon} \, | \, x_n) & \leq f_{m,n}(y_{n,\varepsilon}) - \Lambda_n f_{m,n}(x_n) + \varepsilon \\
& \leq  - \Lambda f_m(x) + 2\varepsilon + C   \\
&  \leq \cJ(y \, | \, x) + 3\varepsilon + C 
\end{aligned}
\end{equation}
for $n \leq N$. Because $\cI^n(x_n,y_{n,\varepsilon}) \leq c + \cJ^n(y_{n,\varepsilon} \, | \, x_n)$, this implies that $y_{n,\varepsilon} \in K_n^{q'}$. Note that because of this the bound in \eqref{eqn:proof_cond_up_bound_first_found_on_constructed_seq} can be improved by using $\sup_{z \in K} f_m(z) \leq 0$ obtained in \eqref{eqn:upper_bound_on_fm_on_K} instead of $\sup_{m} f_{m} \leq C$. Indeed by \eqref{eqn:difference_n_f_m}, we find for $n \geq N$ that we can replace $C$ by $\varepsilon$ in the chain of inequalities of \eqref{eqn:proof_cond_up_bound_first_found_on_constructed_seq} and obtain
\begin{equation} \label{eqn:proof_basic_conditional_limsup_bound}
\cJ^n(y_{n,\varepsilon} \, | \, x_n) \leq \cJ(y \, | \, x) + 4\varepsilon.
\end{equation}

We now extract a sequence $y_n$ by a diagonal argument.	 For $\varepsilon$ taking values in the sequence $\{1/k\}_{k \geq 4}$, we find as above constants $N(1/k)$ which we assume without loss of generality to satisfy $N(1/(k+1)) \geq N(1/k) + 1$ making sure that the sequence $k \mapsto N(1/k)$ diverges to infinity. Define $y_n := y_{n,\epsilon_n}$, where
\begin{equation} \label{eqn:conditional_upper_bound_choice_kn}
\epsilon_n := \begin{cases}
\frac{1}{4} & \text{if } n \leq N(1/4) \\
\frac{1}{k} & \text{if } N\left(\frac{1}{k} \right) < n \leq N\left(\frac{1}{k+1}\right), k \geq 4.
\end{cases}
\end{equation}

By \eqref{eqn:proof_basic_conditional_limsup_bound}
\begin{equation*}
\limsup_{n \rightarrow \infty} \cJ^n(y_n \, | \, x_n) \leq \cJ(y \, | \, x),
\end{equation*}
so that by \eqref{eqn:conditional_gamma_upperbound_I0}, we have
\begin{equation*}
\limsup_{n \rightarrow \infty}  \cI^n(x_n,y_n) \leq \cI(x,y).
\end{equation*}
Thus, we are left to prove that $y_n$ converges to $y$.

\smallskip

As $y_n \in K_n^q$ for all $n$, Assumption \ref{assumption:abstract_spaces_q} (b) implies that $\eta_n(y_n) \in K$.T Thus, every subsequence of $\eta_n(y_n)$ contains a further subsequence that converges to a limit in $K^{q'}$. Indeed let $\eta_{n'}(y_{n'})$ be such a converging subsequence and denote its limit by $\hat{y} \in K^{q'}$ as $n' \rightarrow \infty$. If for all these subsequences we have that $\hat{y} = y$ then it follows that $y_n$ converges to $y$ by general arguments.

\smallskip

Without loss of generality, we assume that the sequence $\eta_n(y_n)$ itself converges to $\hat{y}$. Fix $n \geq N(1/4)$. We prove that $\eta_n(y_n) \in B_{\varepsilon_n}(y)$. As $\varepsilon_n \rightarrow 0$ ans $k \mapsto N(1/k)$ diverges, this will establish that $\hat{y} = y$.

By \eqref{eqn:conditional_upper_bound_choice_kn}, we have that $n \geq N(\varepsilon_n)$. Thus, we are in a situation that we can apply all bounds from the first part of the proof. In the first part of the proof the choice of $m$ depended on an arbitrary $\varepsilon$ and was chosen such that $m(\varepsilon) \geq \varepsilon^{-1}$. We now make the choice of $m$ dependent on $\varepsilon_n$ and thus on $n$. Note, however, that in the inequalities below, we work for \textit{fixed} $n$. We obtain that
\begin{align*}
f_{m}(\eta_n(y_n)) & \geq f_{m,n}(y_{n}) - \varepsilon_n & & \text{by \eqref{eqn:difference_n_f_m}} \\
& = f_{m,n}(y_{n,\epsilon_n}) - \varepsilon_n & & \text{by \eqref{eqn:conditional_upper_bound_choice_kn}}  \\
& \geq \Lambda_n f_{m,n}(x_n) + \cJ^n(y_{n,\epsilon_n} \, | \, x_n) - 2 \epsilon_n & & \text{by \eqref{eqn:choice_of_ynm}} \\
& \geq \Lambda_n f_{m,n}(x_n) - 2 \epsilon_n & & \text{as $\cJ^n \geq 0$} \\
& \geq \Lambda f_m(x) - 3 \epsilon_n & & \text{by \eqref{eqn:difference_Lambda_n_f_m}} \\
& \geq - \cJ(y \, | \, x) - 4 \epsilon_n & & \text{by \eqref{eqn:difference_V_and_I}}
\end{align*}
As $4\varepsilon_n \leq 1$, we find by \eqref{eqn:function_fm_forces_in_small_set} that $\eta_n(y_n) \in B_{\varepsilon_n} \cap K$. As this holds for all $n$, we infer that $\hat{y} = y$.

\end{proof}

\begin{proof}[Proof of Lemma \ref{lemma:upper_bound_finite_dim}]
	Fix $y_0,\dots,y_k \in X$. We show that there are $y_{0,n},\dots,y_{k,n} \in X_n$ such that $\eta_n(y_{i,n}) \rightarrow y_i$ and 
	\begin{equation*}
	\limsup_n I^n[t_{0,n},\dots,t_{k,n}](y_{0,n},\dots,y_{k,n}) \geq I[t_0,\dots,t_k](y_0,\dots,y_k).
	\end{equation*}
	By Lemma \ref{lemma:contracting_equi_coercivity}, Proposition \ref{proposition:conditional_upper_bound} taking $\cX = X^j, \cY = X$ and $\Lambda_n f_n := V_n(t_{j+1,n}-t_{j,n})f_n$, and induction on the dimension $j$, we find $(y_{0,n},\dots,y_{k,n})$ such that $y_{i,n} \rightarrow y_i$ for all $i$ and
	\begin{equation*}
	\limsup_{n \rightarrow \infty} I^n[t_{0,n},\dots,t_{k,n}](y_{0,n},\dots,y_{k,n}) \leq I[t_0,\dots,t_k](y_0,\dots,y_k).
	\end{equation*}
\end{proof}

\begin{proof}[Proof of Proposition \ref{proposition:upper_bound}]
Fix $\gamma^* \in \Omega$. Without loss of generality assume that $I(\gamma^*) < \infty$. We construct a sequence $\gamma_n \in \Omega_n$ such that $\eta_n(\gamma_n) \rightarrow \gamma^*$ and $\limsup_n I^n(\gamma_n) \leq I(\gamma^*)$. Fix $\varepsilon > 0$. 

Set 
\begin{equation*}
K := \{\gamma^*\} \cup \text{closure} \bigcup_{n \geq 1} \left\{\eta_n(\gamma) \, | \, \gamma \in \Omega_n, \, I^n(\gamma) \leq I(\gamma^*) + 1 \right\}.
\end{equation*}
which is a compact set by the equi-coercivity of $\{I^n\}_{n \geq 1}$. Let $d$ be a metric on $K$, which exists by Assumption \ref{assumption:path_space} (a).

By Assumption \ref{assumption:path_space} (c), we find times $0 = t_0 < t_1 < \dots < t_k$ in $\Delta_{\gamma^*}^c$ and an open set $U' \subseteq \hat{X}^{k+1}$ containing $(\gamma^*(t_0),\dots,\gamma^*(t_k))$ such that
\begin{equation} \label{eqn:proof_upper_bound_reduction_to_finite_nr_of_coordinates}
\left\{y \in \Omega \, | \, (y(t_0),\dots,y(t_k)) \in U' \right\} \cap K \subseteq B_\varepsilon(\gamma^*)\cap K.
\end{equation}

The representation of $I$ in \eqref{eqn:functional_project2}, using that $(t_0,\dots,t_k) \in \Delta_{\gamma^*}^c$, yields
\begin{equation} \label{eqn:proof_upper_bound_estimate_of_I}
I(\gamma^*) \geq I[t_0,\dots,t_k](\gamma^*(t_0),\dots,\gamma^*(t_k)).
\end{equation}
By Lemma \ref{lemma:upper_bound_finite_dim}, taking $(t_{0,n},\dots,t_{k,n}) = (t_0,\dots,t_k)$ for each $n$, we find $(y_{0,n},y_{k,n}) \in X_n^k$ such that $\eta_n(y_{i,n}) \rightarrow y_i$ and 
\begin{equation}
\limsup_n I^n[t_0,\dots,t_k](y_{0,n},\dots,y_{k,n}) \leq I[t_0,\dots,t_k](\gamma^*(t_0),\dots,\gamma^*(t_k)). 
\end{equation}
Thus, there is some $N = N(\varepsilon)$ such that for $n \geq N$, we have
\begin{gather}
(\eta_n(y_{0,n}),\dots, \eta_n(y_{k,n})) \in U' \label{eqn:proof_upper_bound_openset} \\
I^n[t_0,\dots,t_k](y_{0,n},\dots,y_{k,n}) \leq I(\gamma^*) + \varepsilon \label{eqn:proof_upper_bound_finite_d_choice_bound_I}
\end{gather}

By \eqref{eqn:functional_contract2}, we can pick a curve $\gamma_{n,\varepsilon}$ in $K$ such that $\gamma_{n,\varepsilon}(t_i) = y_{n,i}$ and $t_i \in \Delta_{\gamma_{n,\varepsilon}}^c$ for all $i$ and
\begin{equation}  \label{eqn:upper_bound_choice_of_recovery_before_diagonal}
I^n(\gamma_{n,\varepsilon}) \leq  I^n[t_1,\dots,t_k](y_{n,0},\dots,y_{n,k}) + \frac{1}{n}
\end{equation}
We will construct the recovery sequence by using a diagonal argument. First note that by \eqref{eqn:proof_upper_bound_reduction_to_finite_nr_of_coordinates}, \eqref{eqn:upper_bound_choice_of_recovery_before_diagonal} and \eqref{eqn:proof_upper_bound_openset} we have for $n \geq N(\varepsilon)$ that
\begin{gather}
I^n(\gamma_{n,\varepsilon}) \leq I(\gamma^*) + \frac{1}{n} + \varepsilon, \label{eqn:proof_upper_bound_before_diagonal_bound} \\
\gamma_{n,\varepsilon} \in B_\varepsilon(\gamma^*). \label{eqn:proof_upper_bound_epsilon_close}
\end{gather}
We proceed by a diagonal argument. Without loss of generality, we can assume that $N(1/(m+1)) \geq N(1/m) + 1$. Define 
\begin{equation*}
m(n) := \begin{cases}
1 & \text{for } n \leq N(1) \\
m^{-1} & \text{for } N\left(\frac{1}{m} \right) < n \leq N\left(\frac{1}{m+1}\right), m \geq 1
\end{cases}
\end{equation*}
and $\gamma_n := \gamma_{n,m(n)}$. Note that as $m \mapsto N(1/m)$ diverges, we have that the constant $\lim_n m(n)^{-1} = 0$. Thus, we infer from \eqref{eqn:proof_upper_bound_epsilon_close} that $\eta_n(\gamma_n) \rightarrow \gamma^*$ and by \eqref{eqn:proof_upper_bound_before_diagonal_bound} that
\begin{equation*}
\limsup_{n \rightarrow \infty} I^n(\gamma_n) \leq I(\gamma^*).
\end{equation*}
\end{proof}

\subsection{Proofs of Theorem \ref{theorem:Gamma_convergence_via_semigroup_convergence} and \ref{theorem:Gamma_convergence}} \label{subsection:proof_main_result}

	\begin{proof}[Proof of Theorem \ref{theorem:Gamma_convergence_via_semigroup_convergence}]
		The lower bound follows from Proposition \ref{proposition:buc_cont_equiv_to_double_bound}. The assumptions of the theorem imply assumptions (i) and (a) of the proposition. The upper bound follows from Proposition \ref{proposition:upper_bound}.
	\end{proof}

	The following result is proven in Theorems 4.7 and 5.1 in \cite{Kr19}
	
	\begin{theorem} \label{theorem:CL_extend}
		Let Condition \ref{condition:convergence_of_generators_and_conditions_extended_supsuperlim} be satisfied. Suppose that 
		\begin{equation*}
		H_\dagger  \subseteq LSC_l(X) \times USC_u(Y),  \qquad H_\ddagger  \subseteq USC_u(X) \times LSC_l(Y).
		\end{equation*}
		are two operators such that $H_\dagger \subseteq ex-\subLIM H_{n,\dagger}$ and $H_\ddagger \subseteq ex-\superLIM H_{n,\ddagger}$
		
		\smallskip
		
		Let $D \subseteq C_b(X)$ be quasi-dense in $C_b(X)$. Suppose that for each $\lambda > 0$ and $h \in D$ the comparison principle holds for 
		\begin{equation} \label{eqn:HJ_semigroup_conv_result} 
		f - \lambda H_\dagger f = h, \qquad f - \lambda H_\ddagger f = h.
		\end{equation}
		
		Then there is a pseudo-resolvent $R(\lambda) : C_b(X) \rightarrow C_b(X)$ that is locally strictly equi-continuous on bounded sets with the same choice of $\hat{q}$ as in Condition \ref{condition:convergence_of_generators_and_conditions_extended_supsuperlim} \eqref{item:convH_strict_equicont_resolvents}. For each $h \in D$ and $\lambda >0$ the function $R(\lambda)h$ is the unique viscosity solution to \eqref{eqn:HJ_semigroup_conv_result}. Let $V(t)$ the semigroup generated by $R(\lambda)$ with generator
		\begin{equation} 
		\widehat{H} := \bigcup_{\lambda} \left\{\left(R(\lambda)h, \frac{R(\lambda) h - h}{\lambda}\right) \, \middle| \, h \in C_b(X) \right\}. \label{def:hatH}
		\end{equation}
		Denote by $\cD$ and the quasi-closure of the uniform closure of $\cD(\widehat{H})$.
		Then
		\begin{enumerate}[(a)]
			\item \label{item:semigroup_density_domains} We have $\widehat{H} \subseteq ex-\LIM \widehat{H}_n$ as in Definition \ref{definition:abstract_LIM}. That is, for all $(f,g) \in \widehat{H}$ there are $(f_n,g_n) \in \widehat{H}_n$ such that $\LIM f_n = f$ and $\LIM g_n = g$.
			\item \label{item:semigroup_extensionV} The semigroup $V(t)$ extends to the quasi-closure $\cD$ of $\cD(\widehat{H})$ on which it is locally strictly equi-continuous on bounded sets.
			\item  \label{item:semigroup_approx_of_domain} For each $f \in \cD$ there are $f_n$ in the uniform closures of $\cD(\widehat{H}_n)$ such that $\LIM f_n = f$.
			\item \label{item:semigroup_convergence_semigroups}  If $f_n$ are in the uniform closures of $\cD(\widehat{H}_n)$ and $f \in \cD$ such that $\LIM f_n =f$ and $t_n \rightarrow t$ then $\LIM V_n(t_n)f_n = V(t) f$. 
		\end{enumerate}
	\end{theorem}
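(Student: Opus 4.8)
The plan is to follow the generalized Barles--Perthame semi-relaxed limit method of \cite{Kr19} (extending \cite{BaPe88,BaPe90}): first construct the limiting pseudo-resolvent $R(\lambda)$ as the common value of the upper and lower semi-relaxed limits of the $R_n(\lambda)$, then generate the semigroup $V(t)$ from $R(\lambda)$ through a Crandall--Liggett \cite{CL71} exponential formula, and finally upgrade resolvent convergence to semigroup convergence by a Kurtz-type \cite{Ku73} approximation argument. Throughout, the non-compactness of $X$ is compensated by the local strict equi-continuity estimates of Condition \ref{condition:convergence_of_generators_and_conditions_extended_supsuperlim}, which play the role that local compactness plays in the classical setting.

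\smallskip

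\emph{Construction of $R(\lambda)$.} Fix $\lambda > 0$ and $h \in D$, and put $h_n := h \circ \eta_n \in C_b(X_n)$, so that $\LIM h_n = h$ by Remark \ref{remark:approximating_functions}; then $R_n(\lambda)h_n \in C_b(X_n)$ is a viscosity solution of $f - \lambda H_{n,\dagger} f = h_n$ and of $f - \lambda H_{n,\ddagger} f = h_n$ by part \ref{item:convH_pseudoresolvents_solve_HJ} of Condition \ref{condition:convergence_of_generators_and_conditions_extended_supsuperlim}. Set $\overline{u} := \hatsuperLIM_n R_n(\lambda)h_n$ and $\underline{u} := \hatsubLIM_n R_n(\lambda)h_n$. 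Contractivity of the $R_n(\lambda)$ together with estimate \ref{item:convH_strict_equicont_resolvents} shows that $\overline{u},\underline{u}$ are bounded, that $\overline{u} \in USC_u(X)$ and $\underline{u} \in LSC_l(X)$, and that $\underline{u} \leq \overline{u}$. Using $H_\dagger \subseteq ex-\subLIM H_{n,\dagger}$ and $H_\ddagger \subseteq ex-\superLIM H_{n,\ddagger}$ and the stability of viscosity sub-/supersolutions under semi-relaxed limits, $\overline{u}$ is a subsolution of $f - \lambda H_\dagger f = h$ and $\underline{u}$ a supersolution of $f - \lambda H_\ddagger f = h$. The comparison principle forces $\overline{u} \leq \underline{u}$, whence $\overline{u} = \underline{u} =: R(\lambda)h \in C_b(X)$ is the unique viscosity solution, and $\LIM_n R_n(\lambda)h_n = R(\lambda)h$. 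Extending $R(\lambda)$ from $D$ to all of $C_b(X)$ by quasi-density and the contraction and equi-continuity bounds yields an operator $R(\lambda) : C_b(X) \to C_b(X)$ inheriting estimate \ref{item:convH_strict_equicont_resolvents} with the same $\hat q$.

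\smallskip

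\emph{The semigroup and part \ref{item:semigroup_density_domains}.} Taking $\LIM$ in the resolvent identity and the contraction inequality for the $R_n(\lambda)$ shows that $\{R(\lambda)\}_{\lambda > 0}$ is a contractive pseudo-resolvent defined on all of $C_b(X)$, so the range condition required by Crandall--Liggett is automatic; hence $V(t)f := \lim_{k \to \infty} R(t/k)^k f$ exists, defines a contractive semigroup, and has generator $\widehat{H}$ as in \eqref{def:hatH}. For part \ref{item:semigroup_density_domains}, given $(f,g) = (R(\lambda)h, \lambda^{-1}(R(\lambda)h - h)) \in \widehat{H}$ with $h \in C_b(X)$, the pairs $(R_n(\lambda)h_n, \lambda^{-1}(R_n(\lambda)h_n - h_n)) \in \widehat{H}_n$ satisfy $\LIM$-convergence to $(f,g)$ by the convergence of the $R_n(\lambda)h_n$ established above, so $\widehat{H} \subseteq ex-\LIM \widehat{H}_n$.

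\smallskip

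\emph{Parts \ref{item:semigroup_extensionV}--\ref{item:semigroup_convergence_semigroups} and the main obstacle.} Estimate \ref{item:convH_strict_equicont_semigroups} allows $V(t)$ to be extended from $\cD(\widehat{H})$ to its quasi-closure $\cD$ with the stated local strict equi-continuity, giving part \ref{item:semigroup_extensionV}; part \ref{item:semigroup_approx_of_domain} follows because resolvent images are $\LIM$-approximated by level-$n$ resolvent images (part \ref{item:semigroup_density_domains}) and such images are dense in $\cD$. For part \ref{item:semigroup_convergence_semigroups}, fix $f \in \cD$, a sequence $f_n$ in the uniform closures of $\cD(\widehat{H}_n)$ with $\LIM f_n = f$, and $t_n \to t$; one compares $V_n(t_n)f_n$ with $R_n(t_n/k)^k f_n$ and $V(t)f$ with $R(t/k)^k f$, with discretization error uniform in $n$ and $k$ by the contraction and equi-continuity bounds, then iterates the resolvent convergence $\LIM R_n(\lambda)g_n = R(\lambda)g$ over the $k$ steps and lets $k \to \infty$. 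I expect this last, nonlinear Kurtz approximation step --- showing the Crandall--Liggett iterates converge uniformly in the step size while simultaneously absorbing the perturbation $t_n \to t$ and the loss of compactness of $X$ --- to be the principal obstacle; it is precisely here, and in ensuring finiteness of the semi-relaxed limits in the construction of $R(\lambda)$, that the local strict equi-continuity on bounded sets is indispensable, since without it neither the semi-relaxed limits nor the iterated resolvent estimates would close.
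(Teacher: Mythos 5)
The paper does not prove Theorem \ref{theorem:CL_extend} itself but cites it from Theorems 4.7 and 5.1 of \cite{Kr19}, so there is no in-paper proof to compare against line by line; your sketch follows exactly the strategy the paper attributes to \cite{Kr19}, namely the Barles--Perthame semi-relaxed-limit construction of the pseudo-resolvent, Crandall--Liggett generation of the semigroup via Proposition \ref{proposition:pseudo-resolvent_generation}, and a Kurtz-type iterated-resolvent argument for semigroup convergence, with the local strict equi-continuity bounds of Condition \ref{condition:convergence_of_generators_and_conditions_extended_supsuperlim} compensating for the lack of local compactness. One small gap to close: in part \ref{item:semigroup_density_domains} you invoke $\LIM_n R_n(\lambda)h_n = R(\lambda)h$ for $h \in C_b(X)$, but the comparison-principle construction only delivers this for $h \in D$; you need an explicit $\epsilon/3$-argument (quasi-density of $D$ plus the uniform resolvent equi-continuity of part \ref{item:convH_strict_equicont_resolvents}, along the lines of Lemma \ref{lemma:extension_operator}) to upgrade the convergence to all $h \in C_b(X)$ before the pairs $(R_n(\lambda)h_n,\lambda^{-1}(R_n(\lambda)h_n-h_n))$ can be used as the recovery sequence in $\widehat{H}_n$.
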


\begin{proof}[Proof of Theorem \ref{theorem:Gamma_convergence}]
	The result will follow from Theorem \ref{theorem:Gamma_convergence_via_semigroup_convergence} if we can prove convergence of semigroups. This, however, follows immediately from Theorem \ref{theorem:CL_extend} \ref{item:semigroup_convergence_semigroups}. Finally, by Assumption (f) of Theorem \ref{theorem:Gamma_convergence}, we find $\cD = C_b(X)$ in Theorem \ref{theorem:CL_extend} so that the semigroup $v(t)$ is defined on the whole of $C_b(X)$.
\end{proof}

\appendix

\section{Dual functionals}

The proofs of the main results on Gamma convergence results will use convergence of dual functionals. These results are somewhat standard and are therefore included as an Appendix.

\begin{definition}[A dual functional]
	For $J : X \rightarrow [0,\infty]$, define for every measurable $f$ the functional $\Lambda(f) = \sup_x f(x) - J(x)$.
\end{definition}

By definition, it follows that $J(x) \geq \sup_{f \in M_b(X)} \left\{f(x) - \Lambda(f) \right\}$.

\begin{definition}
	We say that a collection $D \subseteq M_b(X)$ is \textit{functional determining} if for any coercive lower semi-continuous functional $J$ and dual functional $\Lambda$, we have
	\begin{equation*}
	J(x) = \sup_{f \in D} f(x) - \Lambda(f).
	\end{equation*}
	We say that $D$ is \textit{bounded from above} if $\sup_{f \in D} \sup_{x \in X} f(x) < \infty$. 
\end{definition}

We will consider the following class of functions that can be used to `isolate points' by a functions that are close to $0$ in a neighbourhood of the point and very small further away. These classes of functions will turn out to be functional determining.

\begin{definition} \label{definition:isolating_points}
	A collection of functions $D \subseteq C_b(X)$ is said to \textit{isolate points} if for all $x \in X$, constants $m > 0$, compact sets $K \subseteq X$ and open sets $U \subseteq X$ with $x \in U$ there is a function $f \in D$,  such that 
	\begin{enumerate}[(a)]
		\item $|f(x)| \leq m^{-1}$,
		\item $\sup_{y \in K} f(y) \leq 0$,
		\item $\sup_{y \in K \cap U^c} f(y) \leq - m$.
	\end{enumerate} 
	We say that $D$ is \textit{bounded above by $M \in \bR$} if $\sup_{f \in D} \sup_{x \in X} f(x) \leq M$. We say that $D$ is \textit{bounded above} if there is some $M \in \bR$ such that $D$ is bounded above by $M$.
\end{definition}

\begin{lemma} \label{lemma:isolating_points_gives_functions_that_approximate}
	Let $X$ be some space with metrizable compact subsets. Let $J$ be a coercive lower semi-continuous functional on $X$. Suppose $D \subseteq M_b(X)$ is bounded from above and isolates points. 
	Let $x \in X$ be a point such that $J(x) < \infty$ and let $K$ be a compact set. Denote by $d_K$ a metric on $K$ and denote by $B_r(y) := \{z \in K \, | \, d_K(y,z) < r\}$.
	
	Then, there is a metric $d_K$ a metric on $K$ (denote by $B_r(y) := \{z \in K \, | \, d_K(y,z) < r\}$) and functions $f_m \in D$ such that
	\begin{enumerate}[(a)]
		\item $|f_m(x)| \leq m^{-1}$
		\item $\lim_{m \rightarrow \infty} \sup_{y \in K} f_m(y) \leq 0$
		\item 
		\begin{equation*}
		\sup_{y\in B_{1/m}^c(x) \cap K} f_m(y) \leq - m.
		\end{equation*}
	\end{enumerate}
	and $\lim_{m \rightarrow \infty} \Lambda(f_m) = -J(x)$
\end{lemma}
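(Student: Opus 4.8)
The plan is to read the functions $f_m$ off the definition of ``$D$ isolates points'', but applied on a compact set strictly larger than $K$, chosen so that $f_m$ is automatically very negative outside it, and then to compute $\Lambda(f_m)$ by splitting $X$ into three regions. First I would enlarge the compact set. Since $D$ is bounded from above, set $C := \sup_{g \in D}\sup_{z \in X} g(z) < \infty$ and $M_0 := |C| + 1 + J(x)$, which is finite because $J(x) < \infty$. By coercivity of $J$ the sublevel set $L := \{z \in X : J(z) \le M_0\}$ is compact and contains $x$. Put $\widehat{K} := K \cup \{x\} \cup L$, a compact set, and fix a metric $\widehat{d}$ on $\widehat{K}$ (available since compact subsets of $X$ are metrizable); let $d_K$ be the restriction of $\widehat{d}$ to $K \cup \{x\}$, so that the balls $B_r(x) = \{z \in K : d_K(x,z) < r\}$ are well defined.

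Next I would construct the $f_m$. For each $m \ge 1$ the set $\{z \in \widehat{K} : \widehat{d}(x,z) < 1/m\}$ is open in $\widehat{K}$, hence of the form $U_m \cap \widehat{K}$ for some open $U_m \subseteq X$ with $x \in U_m$. Applying the definition of ``isolates points'' with the point $x$, the constant $m$, the compact set $\widehat{K}$ and the open set $U_m$ yields $f_m \in D$ with $|f_m(x)| \le m^{-1}$, with $\sup_{y \in \widehat{K}} f_m(y) \le 0$, and with $\sup_{y \in \widehat{K} \cap U_m^c} f_m(y) \le -m$. Since $K \subseteq \widehat{K}$, property (a) is immediate, property (b) holds in the strong form $\sup_{y \in K} f_m(y) \le 0$ for every $m$, and property (c) follows from $B_{1/m}^c(x) \cap K = K \cap U_m^c \subseteq \widehat{K} \cap U_m^c$, the equality holding because $B_{1/m}(x) = U_m \cap K$.

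It remains to prove $\lim_{m \to \infty}\Lambda(f_m) = -J(x)$. The inequality $\Lambda(f_m) \ge f_m(x) - J(x) \ge -m^{-1} - J(x)$ gives $\liminf_m \Lambda(f_m) \ge -J(x)$. For the reverse, fix $\varepsilon > 0$ and estimate $\Lambda(f_m) = \sup_y (f_m(y) - J(y))$ over the three regions that partition $X$. On $B_{1/m}(x) \cap \widehat{K}$ one has $f_m \le 0$, and lower semicontinuity of $J$ at $x$ (taken inside $\widehat{K}$, whose subspace topology is that of $\widehat{d}$) gives $\inf_{B_{1/m}(x)} J \ge J(x) - \varepsilon$ for $m$ large, so the supremum there is at most $-J(x) + \varepsilon$. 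On $\widehat{K} \setminus B_{1/m}(x)$ one has $f_m \le -m$ and $J \ge 0$, so the supremum is at most $-m$, which lies below $-J(x)$ once $m > J(x)$. On $X \setminus \widehat{K} \subseteq X \setminus L$ one has $J(y) > M_0$, whence $f_m(y) - J(y) \le C - M_0 \le -1 - J(x)$. Taking the maximum over the three regions gives $\Lambda(f_m) \le -J(x) + \varepsilon$ for all large $m$, hence $\limsup_m \Lambda(f_m) \le -J(x)$, and the claim follows.

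The main obstacle is the region $X \setminus \widehat{K}$: the bound on $f_m(y) - J(y)$ there cannot use the ``isolates points'' property at all, and this is exactly why $\widehat{K}$ must absorb the sublevel set $\{J \le |C| + 1 + J(x)\}$, so that coercivity of $J$ and the uniform upper bound $C$ on $D$ together push $f_m - J$ below $-J(x)$ outside the chosen compact. A secondary technicality is the bookkeeping between the metric $d_K$ on $K$ and the metric $\widehat{d}$ on the larger compact $\widehat{K}$ (and the point $x$, which need not lie in $K$), handled by metrizing $\widehat{K}$ first and restricting afterwards.
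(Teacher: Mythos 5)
Your proof takes a genuinely different route from the paper's. The paper argues via a near\-/maximizer: it picks $x_m$ with $\Lambda(f_m) \leq f_m(x_m) - J(x_m) + 1/m$, shows $x_m$ is trapped in the enlarged compact (because otherwise $J(x_m)$ would be too large), deduces from property (c) that $x_m \to x$, and then uses lower semi-continuity and a chain of inequalities to get $J(x_m) \to J(x)$ and hence $\Lambda(f_m) \to -J(x)$. You instead bound $\Lambda(f_m) = \sup_y\{f_m(y) - J(y)\}$ directly by partitioning $X$ into a small neighbourhood of $x$ in the enlarged compact, the rest of the enlarged compact, and the exterior, controlling $f_m - J$ separately on each piece. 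Both approaches use essentially the same enlargement (you: $K \cup \{x\} \cup \{J \leq |C|+1+J(x)\}$; the paper: $K \cup \{J \leq M_1 + M_2 + 2\}$); your direct partition is arguably the more transparent of the two and avoids tracking a sequence of optimizers.

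One technical slip: you define $B_r(x) = \{z \in K : d_K(x,z) < r\}$ — a ball \emph{in $K$} — and then in the final estimate partition $\widehat K$ as $B_{1/m}(x) \cup (\widehat K \setminus B_{1/m}(x))$. But $\widehat K \setminus B_{1/m}(x)$ contains points of $\widehat K \setminus K$ lying inside $U_m$, where the only available bound is $f_m \leq 0$, not $f_m \leq -m$. The partition should use $U_m \cap \widehat K$ (the ball in $\widehat K$ with respect to $\widehat d$) in place of $B_{1/m}(x)$: on $U_m \cap \widehat K$ one has $f_m \leq 0$ and, by lower semi-continuity, $J \geq J(x) - \varepsilon$ for $m$ large; on $\widehat K \cap U_m^c$ one has $f_m \leq -m$; on $X \setminus \widehat K$ the coercivity bound applies. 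With that correction the estimate $\Lambda(f_m) \leq -J(x) + \varepsilon$ goes through exactly as you intend, so the slip is cosmetic rather than structural.
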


\begin{proof}
	Suppose that $J(x) = M_1 < \infty$ and suppose that $M_2 \geq 0$ is a global upper bound for the set $D$.  Let $K_1 := K \cup \{ y \in X \, | \, J(y) \leq M_1 + M_2 + 2\}$. Let $d$ be some metric on $K_1$ and denote by $d_K$ the restriction of $d$ to $K$.
	
	For $m \geq 1$ let $f_m$ be a function satisfying (a)-(c) of Definition \ref{definition:isolating_points} with compact set $K_1$, base-point $x$, and $U$ any open set in $X$ such that $U \cap K_1 =  B_{1/m}(x)$.
	
	First of all,
	\begin{equation} \label{eqn:isolating_points_basic_inequality}
	\Lambda(f_m) \geq f_m(x) - J(x) \geq - \frac{1}{m} - J(x).
	\end{equation}
	
	Second of all, let $x_m$ be a point such that 
	\begin{equation}\label{eqn:isolating_points_inequality_varying_points}
	\Lambda(f_m) \leq f_m(x_m) - J(x_m) + \frac{1}{m}.
	\end{equation}
	
	We conclude that
	\begin{equation}
	J(x_m) \leq f_m(x_m) - \Lambda(f_m) + \frac{1}{m} \leq M_2 + J(x) + \frac{2}{m},
	\end{equation}
	which implies that $x_m \in K_1$.
	
	\smallskip
	
	Additionally, by \eqref{eqn:isolating_points_basic_inequality} and \eqref{eqn:isolating_points_inequality_varying_points}, we have
	\begin{align*}
	f_m(x_m) & \geq \Lambda(f_m) + J(x_m) - \frac{1}{m} \\
	& \geq f_m(x) - J(x) + J(x_m) - \frac{1}{m}. 
	\end{align*}
	Using that $f_m(x) \geq - \frac{1}{m}$ and $J(x_m) \geq 0$, we find
	\begin{equation*}
	f_m(x_m) \geq -M_1 - \frac{2}{m} \geq - M_1 - 2.
	\end{equation*}
	By property (c) of Definition \ref{definition:isolating_points}, combined with the fact that $x_m \in K_1$, we find that if $m \geq M_1 + 2$, then $x_m \in B_{1/m}(x)$. We conclude that $\lim_{m \rightarrow \infty} x_m = x$.
	
	\smallskip
	
	By the lower semi-continuity of $J$ we find that $\liminf_{m \rightarrow \infty} J(x_m) \geq J(x)$. On the other hand,
	\begin{align*}
	J(x) & \geq f_m(x) - \Lambda(f_m) \\
	& \geq f_m(x) - f_m(x_m) + J(x_m) - \frac{1}{m} \\
	& \geq J(x_m) - \frac{2}{m}
	\end{align*} 
	by our choice of $f_m$. This yields that $\limsup_{m \rightarrow \infty} J(x_m) \leq J(x)$, which implies that 
	\begin{equation} \label{eqn:convergence_of_J}
	\lim_m J(x_m) = J(x).
	\end{equation}
	
	We will now prove that $J(x) = - \lim_m \Lambda(f_m)$. By \eqref{eqn:isolating_points_basic_inequality}, we find $J(x) \geq \limsup_m - \Lambda(f_m)$, whereas \eqref{eqn:isolating_points_inequality_varying_points}, combined with the fact that $x_m \in K$ implies $f_m(x_m) \leq 0$, yields
	\begin{equation*}
	J(x_m) \leq - \frac{1}{m} - \Lambda(f_m).
	\end{equation*}
	By \eqref{eqn:convergence_of_J} the limit of the left hand side exists and equals $J(x)$, which gives $J(x) \leq \liminf_m - \Lambda(f_m)$. This proves the claim.
	
\end{proof}

The following result is the analogue of Proposition 3.20 in \cite{FK06}.

\begin{proposition} \label{proposition:bounded_above_and_isolating_points_implies_functional_determining}
	Let $D \subseteq M_b(X)$ contain a set $D_0$ that is bounded above and isolating points. Then $D$ is functional determining.
\end{proposition}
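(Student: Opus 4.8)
The plan is to verify the identity $J(x) = \sup_{f \in D}\{f(x) - \Lambda(f)\}$ for an arbitrary coercive lower semi-continuous functional $J : X \to [0,\infty]$ with dual functional $\Lambda$. One inequality is immediate from the definition of $\Lambda$: since $\Lambda(f) \geq f(y) - J(y)$ for every $y$, in particular for $y = x$, we get $f(x) - \Lambda(f) \leq J(x)$, hence $\sup_{f \in D}\{f(x) - \Lambda(f)\} \leq \sup_{f \in M_b(X)}\{f(x)-\Lambda(f)\} \leq J(x)$. So the entire content is the reverse inequality, and I would split the argument according to whether $J(x)$ is finite.

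For the case $J(x) < \infty$ the work has essentially already been done: I would apply Lemma \ref{lemma:isolating_points_gives_functions_that_approximate} to the subset $D_0 \subseteq D$ (which is bounded from above and isolates points, and $X$ has metrizable compact sets by the standing assumptions). This yields functions $f_m \in D_0$ with $|f_m(x)| \leq m^{-1}$ and $\Lambda(f_m) \to -J(x)$, whence $f_m(x) - \Lambda(f_m) \to J(x)$. As each $f_m \in D_0 \subseteq D$, taking the supremum over $D$ gives $\sup_{f \in D}\{f(x)-\Lambda(f)\} \geq J(x)$, which completes this case.

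For the case $J(x) = \infty$ I would show the supremum equals $+\infty$ by producing, for each fixed $M > 0$, a function $f \in D_0$ with $f(x) - \Lambda(f) \geq M$. Set $C := \sup_{f \in D_0}\sup_{y} f(y) < \infty$ (assume $C \geq 0$), put $m := M + C + 1$, and let $K := \{y \in X : J(y) \leq m\}$, which is compact by coercivity and closed since $X$ is Hausdorff. Since $J(x) = \infty$, the point $x$ lies in the open set $X \setminus K$; applying the isolating-points property of $D_0$ with this open neighbourhood of $x$, the compact set $K$, and the constant $m$, we obtain $f \in D_0$ with $|f(x)| \leq m^{-1}$ and (because $K$ is contained in the complement of our neighbourhood of $x$) $\sup_{y \in K} f(y) \leq -m$. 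Now estimate $f(x) - \Lambda(f) = \inf_y\{f(x) - f(y) + J(y)\}$ in two cases: for $y \in K$, using $f(x) \geq -m^{-1}$, $f(y) \leq -m$ and $J \geq 0$, the bracket is $\geq m - m^{-1} \geq M$; for $y \notin K$, using $f(x) \geq -m^{-1}$, $f(y) \leq C$ and $J(y) > m$, the bracket is $> -m^{-1} - C + m \geq M$. Hence $f(x) - \Lambda(f) \geq M$, and letting $M \to \infty$ gives $\sup_{f \in D}\{f(x) - \Lambda(f)\} = \infty = J(x)$.

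I do not expect a genuine obstacle here: the only point requiring a little care is the bookkeeping in the second case, namely choosing the threshold $m$ defining $K$ large enough, in terms of both $M$ and the uniform upper bound $C$ of $D_0$, so that the estimate coming from condition (c) of the isolating-points definition on $K$ and the estimate coming from coercivity off $K$ both yield the same lower bound $M$. This is precisely where the hypothesis that $D_0$ is bounded above is used; everything else is elementary and reduces to the already-proved Lemma \ref{lemma:isolating_points_gives_functions_that_approximate}.
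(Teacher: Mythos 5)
Your proposal is correct and matches the paper's argument essentially step for step: the easy inequality from the definition of $\Lambda$, the finite case via Lemma~\ref{lemma:isolating_points_gives_functions_that_approximate}, and the infinite case via the isolating-points property applied with the compact sublevel set $K=\{J\leq m\}$ and the open neighbourhood $U=K^c$ of $x$. The only cosmetic difference is that the paper fixes $m$ and lets $m\to\infty$ to conclude $\Lambda(f_m)\leq M_2-m\to-\infty$, whereas you fix the target $M$ and solve for $m=M+C+1$; this is the same estimate.
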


Note that $C_b(X)$ always contains a set which isolates points and is bounded above.

\begin{proof}[Proof of Proposition \ref{proposition:bounded_above_and_isolating_points_implies_functional_determining}]
	Let $J$ be a coercive lower semi-continuous functional on $X$ and let $\Lambda$ be its dual functional. Using the definition of $\Lambda$, we find
	\begin{equation*}
	\sup_{f \in D}  f(x) - \Lambda(f) = \sup_{f \in D}  \inf_{y \in X} f(x) - f(y) + J(y) \leq J(x)
	\end{equation*}	
	by taking $y= x$. The converse inequality in the case that $J(x) < \infty$ follows by Lemma \ref{lemma:isolating_points_gives_functions_that_approximate}. 
	
	\smallskip
	
	Suppose now that $J(x) = \infty$. Let $M_2$ be the uniform upper bound on the functions in $D_0$. Fix some constant $m \geq 1$ and set $K = \{y \, | \, J(y) \leq m\}$. By the lower semi-continuity of $J$, there is some open set $U \subseteq X$ such that $x \in U$ and if $y \in U$, then $J(y) \geq m$. Choose $f_m \in D_0$ as in Definition \ref{definition:isolating_points} with point $x$, constant $m$, compact set $K$ and open set $U$. Then, we have the following three results.
	\begin{enumerate}[(a)]
		\item By the uniform bound on $f \in D_0$ and the bound for $J$ on $U$, we have
		\begin{equation*}
		\sup_{ y \in U} f_m(y) - J(y) \leq M_2 - m.
		\end{equation*}
		\item By choice of our function $f_m$ and the fact that $J(x) \geq 0$, we find
		\begin{equation*}
		\sup_{ y \in U^c(x) \cap K} f_m(y) - J(y) \leq - m \leq  M_2 - m.
		\end{equation*}
		\item By our choice of the set $K$ and the uniform bound on $f \in D_0$, we have
		\begin{equation*}
		\sup_{ y \in U^c(x)\cap K^c} f_m(y) - J(y) \leq M_2 - m.
		\end{equation*}
	\end{enumerate}
	We conclude from (a),(b) and (c) that
	\begin{equation*}
	\Lambda(f_m) = \sup_{ y } f_m(y) - J(y) \leq M_2 - m.
	\end{equation*}
	Therefore,
	\begin{align*}
	I(x) & \geq \limsup_{m \rightarrow \infty} f_m(x) - \Lambda(f_m) \\
	& \geq \limsup_{m \rightarrow \infty} - m^{-1} + m - M_2 \\
	& = \infty.
	\end{align*}
\end{proof}

\section{Viscosity solutions, operators, the strict topology and the convergence of spaces} \label{appendix:operators_topology_and_viscosity_solutions}

We repeat some of the basic notions of \cite{Kr19} to support the generator approach to the convergence of semigroups. All proofs can be found therein.

\subsection{Viscosity solutions}

Let $X$ and $Y$ be two spaces. Let $\gamma : Y \rightarrow X$ be continuous and surjective.

We consider operators $A \subseteq M(X) \times C(Y)$. If $A$ is single valued and $(f,g) \in A$, we write $Af := g$. We denote $\cD(A)$ for the domain of $A$ and $\cR(A)$ for the range of $A$.

\begin{definition}
	Let $A_\dagger \subseteq LSC_l(X) \times USC_u(Y)$ and $A_\ddagger \subseteq USC_u(X) \times LSC_l(Y)$. Fix $h_1,h_2 \in M(X)$. Consider the equations
	\begin{align} 
	f -  A_\dagger f & = h_1, \label{eqn:differential_equation_Adagger} \\
	f - A_\ddagger f & = h_2. \label{eqn:differential_equation_Addagger}
	\end{align}
	\begin{description}
		\item[Classical solutions] We say that $u$ is a \textit{classical subsolution} of equation \eqref{eqn:differential_equation_Adagger} if there is a $g$ such that $(u,g) \in A_\dagger$ and $u - g \leq h$. We say that $v$ is a \textit{classical supersolution} of equation \eqref{eqn:differential_equation_Addagger} if there is $g$ such that $(v,g) \in A_\ddagger$ and $v - g \geq h$. We say that $u$ is a \textit{classical solution} if it is both a sub- and a supersolution.
		\item[Viscosity subsolutions] We say that $u : X \rightarrow \bR$ is a \textit{subsolution} of equation \eqref{eqn:differential_equation_Adagger} if $u \in USC_u(X)$ and if, for all $(f,g) \in A_\dagger$ such that $\sup_x u(x) - f(x) < \infty$ there is a sequence $y_n \in Y$ such that
		\begin{equation} \label{eqn:subsol_optimizing_sequence}
		\lim_{n \rightarrow \infty} u(\gamma(y_n)) - f(\gamma(y_n))  = \sup_x u(x) - f(x),
		\end{equation}
		and
		\begin{equation} \label{eqn:subsol_sequence_outcome}
		\limsup_{n \rightarrow \infty} u(\gamma(y_n)) - g(y_n) - h_1(\gamma(y_n)) \leq 0.
		\end{equation}
		\item[Viscosity supersolution] 	We say that $v : X \rightarrow \bR$ is a \textit{supersolution} of equation \eqref{eqn:differential_equation_Addagger} if $v \in LSC_l(X)$ and if, for all $(f,g) \in A_\ddagger$ such that $\inf_x v(x) - f(x) > - \infty$ there is a sequence $y_n \in Y$ such that
		\begin{equation} \label{eqn:supersol_optimizing_sequence}
		\lim_{n \rightarrow \infty} v(\gamma(y_n)) - f(\gamma(y_n))  = \inf_x v(x) - f(x),
		\end{equation}
		and
		\begin{equation} \label{eqn:supersol_sequence_outcome}
		\liminf_{n \rightarrow \infty} v(\gamma(y_n)) - g(y_n) - h_2(\gamma(y_n)) \geq 0.
		\end{equation}
		\item[Viscosity solution] We say that $u$ is a \textit{solution} of the pair of equations \eqref{eqn:differential_equation_Adagger} and \eqref{eqn:differential_equation_Addagger} if it is both a subsolution for $A_\dagger$ and a supersolution for $A_\ddagger$.
		\item[Comparison principle] We say that  \eqref{eqn:differential_equation_Adagger} and \eqref{eqn:differential_equation_Addagger} satisfy the \textit{comparison principle} if for every subsolution $u$ to \eqref{eqn:differential_equation_Adagger} and supersolution $v$ to \eqref{eqn:differential_equation_Addagger}, we have
		\begin{equation} \label{eqn:comparison_estimate}
		\sup_x u(x) - v(x) \leq \sup_x h_1(x) - h_2(x).
		\end{equation}
		If $H = A_\dagger = A_\ddagger$, we will say that the comparison principle holds for $f - \lambda Af = h$, if for any subsolution $u$ for $f - \lambda Af = h_1$ and supersolution $v$ of $f - \lambda Af = h_2$ the estimate in \eqref{eqn:comparison_estimate} holds. 
	\end{description}
\end{definition}

Usually, $Y = X$ and $\gamma(x) = x$ simplifying the definitions above. $Y$ can be chosen distinct from $X$ for example in the setting that the operator $A$ is obtained as a limit from operators $A_n$ for which there is a natural separation of time-scales.

\subsection{Operators and the strict and buc topology}

In addition to normed spaces, we consider bounded and uniform convergence on compacts (buc-convergence). This notion of convergence for functions on $C_b(X)$ is more natural from an applications point of view. This is due to the fact that it is the restriction of the locally convex strict topology restricted to sequences, see e.g. \cite{Bu58,Se72}. Indeed, it is the strict topology for which most well known results generalize (under appropriate conditions on the topology, e.g. $X$ Polish): Stone-Weierstrass, Arzelà-Ascoli and the Riesz representation theorem. We define both notions.

\begin{definition}[buc convergence] \label{definition:buc_convergence}
	Let $f_n \in C_b(X)$ and $f \in C_b(X)$. We say that $f_n$ converges \textit{bounded and uniformly on compacts} (buc) if $\sup_n \vn{f_n} < \infty$ and if for all compact $K \subseteq X$:
	\begin{equation} \label{eqn:buc_uniform}
	\lim_n \sup_{x \in K} \left|f_n(x) - f(x)\right| = 0.
	\end{equation}
\end{definition}

Note \eqref{eqn:buc_uniform} can be replaced by $f_n(x_n) \rightarrow f(x)$ for all sequences $x_n \in K$ that converge to $x \in K$.

\begin{definition}
	The \textit{(sub) strict topology} $\beta$ on the space $C_b(X)$ for a completely regular space $X$ is generated by the collection of semi-norms
	\begin{equation*}
	p(f) := \sup_n a_n \sup_{x \in K_n} |f(x)|
	\end{equation*}
	where $K_n$ are compact sets in $X$ and where $a_n \geq 0$ and $a_n \rightarrow 0$.
\end{definition}

\begin{remark} \label{remark:strict_implies_buc}
	The (sub)strict topology is the finest locally convex topology that coincides with the compact open topology on bounded sets. Thus, a sequence converges strictly if and only if it converges buc.
	
	In the literature on locally convex spaces, the strict topology is usually referred to as the substrict topology, but on Polish spaces, amongst others, these topologies coincide, see \cite{Se72}.
\end{remark}

\begin{definition}
	\begin{enumerate}[(a)]
		\item Denote $B_r := \left\{f \in C_b(X) \, \middle| \, \vn{f} \leq r \right\}$. We say that a set $D$ is \textit{quasi-closed} if for all $r \geq 0$ the set $D \cap B_r$ is closed for the strict topology (or equivalently for the compact open or buc topologies).
		\item We say that $\widehat{D}$ is the \textit{quasi-closure} of $D$ if $\widehat{D} = \bigcup_{r > 0} \widehat{D}_r$, where $\widehat{D}_r$ is the strict closure of $D \cap B_r$.
		\item We say that $D_1$ is \textit{quasi-dense} in $D_2$ if $D_1 \cap B_r$ is strictly dense in $D_2 \cap B_r$ for all $r \geq 0$.
	\end{enumerate}
\end{definition}

Next, we consider operators with respect to a hierarchy of statements regarding continuity involving the strict topology. 

\begin{proposition} \label{proposition:buc_cont_equiv_to_double_bound}
	Let $T : C_b(X) \rightarrow C_b(X)$. Consider
	\begin{enumerate}[(a)]
		\item $T$ is strictly continuous.
		\item \label{item:continuity_strict_intermediate} For all $\delta > 0$, $r > 0$, and compact sets $K$ there are $C_0(r)$, $C_1(\delta,r)$ and a compact set $\hat{K}(K,\delta,r)$ such that 
		\begin{equation*}
		\sup_{x \in K} |Tf(x) - Tg(x)| \leq \delta C_0(r) + C_1(\delta, r) \sup_{x \in \hat{K}(K,\delta,r)} |f(x) - g(x)|
		\end{equation*}
		for all $f,g \in C_b(X)$ such that $\vn{f} \vee \vn{g} \leq r$.
		\item $T$ is strictly continuous on bounded sets.
	\end{enumerate}
	Then (a) implies (b) and (b) implies (c).
\end{proposition}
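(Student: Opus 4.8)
The plan is to establish (b)$\Rightarrow$(c) by a direct seminorm estimate and (a)$\Rightarrow$(b) by contraposition. Throughout, write $\vn{h}_{K} := \sup_{x \in K}|h(x)|$ for $h \in C_b(X)$ and $K \subseteq X$ compact, and recall (Remark~\ref{remark:strict_implies_buc}) that on every ball $B_r$ the strict topology coincides with the compact-open topology, so that a strict seminorm has the form $p(h) = \sup_m a_m \vn{h}_{K_m}$ with $a_m \geq 0$, $a_m \to 0$ and $K_m$ compact.

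For (b)$\Rightarrow$(c) it suffices to show that $f_\alpha \to f$ in $B_r$ for the compact-open topology implies $Tf_\alpha \to Tf$ strictly. Fix such a seminorm $p$ and a $\delta > 0$. Applying (b) with this $r$ and $\delta$ to each $K_m$ produces $C_0(r), C_1(\delta,r)$ and compact sets $\hat K_m := \hat K(K_m,\delta,r)$ with $\vn{Tf_\alpha - Tf}_{K_m} \leq \delta C_0(r) + C_1(\delta,r)\vn{f_\alpha - f}_{\hat K_m}$ for all $m$ and $\alpha$; in particular these quantities are finite. Multiplying by $a_m$ and taking $\sup_m$,
\[
p(Tf_\alpha - Tf) \;\leq\; \delta\, C_0(r)\sup_m a_m \;+\; C_1(\delta,r)\, q(f_\alpha - f),
\qquad q(h) := \sup_m a_m \vn{h}_{\hat K_m},
\]
where $q$ is again a strict seminorm. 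Since $f_\alpha - f$ stays in the bounded set $B_{2r}$ and converges there strictly, $q(f_\alpha - f) \to 0$, so $\limsup_\alpha p(Tf_\alpha - Tf) \leq \delta C_0(r)\sup_m a_m$; letting $\delta \downarrow 0$ gives $p(Tf_\alpha - Tf) \to 0$, and since $p$ is arbitrary this is (c).

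For (a)$\Rightarrow$(b) I would argue by contraposition. If (b) fails there are $\delta_0, r_0 > 0$ and a compact $K_0$ such that (choosing $C_0 = 1$, $C_1 = n$) for every compact $\hat K$ and every $n$ one finds $f, g \in B_{r_0}$ with $\vn{Tf - Tg}_{K_0} > \delta_0 + n\vn{f-g}_{\hat K}$. One first fixes an increasing sequence of compacts $\hat K_n$ and witnesses $f_n, g_n$ for $\hat K = \hat K_n$, and establishes $\sup_n \vn{Tf_n - Tg_n}_{K_0} < \infty$ --- immediate for the contractive resolvents and semigroups of the applications, and otherwise to be extracted from strict continuity of $T$. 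Then $\vn{f_n - g_n}_{\hat K_n} \to 0$ while $\vn{Tf_n - Tg_n}_{K_0} \geq \delta_0$; choosing $x_n \in K_0$ realizing the latter and using metrizability of $K_0$ one passes to a subsequence with $x_n \to x^* \in K_0$. The final, and most delicate, step is to extract from $(f_n)$ and $(g_n)$ a further subsequence together with a common limit $h^* \in C_b(X)$ such that $f_{n_k}, g_{n_k} \to h^*$ strictly: strict continuity then yields $Tf_{n_k} - Tg_{n_k} \to 0$ strictly, hence uniformly on $K_0$, contradicting $|Tf_{n_k}(x_{n_k}) - Tg_{n_k}(x_{n_k})| \geq \delta_0$.

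I expect that last extraction to be the main obstacle: the ball $B_{r_0}$ is bounded but not strictly compact --- by the Arzel\`a--Ascoli theorem for the strict topology it is not equicontinuous on compact sets --- so one cannot simply take a convergent subsequence of the witnesses. Upgrading the pointwise strict continuity of $T$ to the uniform-on-$B_r$ estimate (b) is the real content, and it has to use additional structure, such as boundedness of $T$ on bounded sets together with a diagonal construction over the metrizable compact subsets of $X$ (possibly after replacing the $f_n, g_n$ by modifications agreeing with them on $\hat K_n$ but lying in a relatively compact set). The implication (b)$\Rightarrow$(c) above, by contrast, is routine.
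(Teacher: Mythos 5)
Your (b)$\Rightarrow$(c) argument is correct: applying (b) to each $K_m$ in the seminorm, the derived $q(h)=\sup_m a_m \vn{h}_{\hat K_m}$ is again a strict seminorm, and since $f_\alpha - f$ stays in $B_{2r}$, where the compact-open and strict topologies agree, $q(f_\alpha - f)\to 0$ and the estimate closes after letting $\delta\downarrow 0$.

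Your (a)$\Rightarrow$(b) is, as you concede, not a proof, and the gap you flag cannot be patched along the route you sketch. The ball $B_{r_0}$ is not strictly compact nor sequentially strictly compact (compact-open compactness of $B_{r_0}$ would require equicontinuity by Arzel\`a--Ascoli), so no subsequence of the witness pairs $(f_n,g_n)$ need admit a common strict limit $h^*$, and without such an $h^*$ pointwise $\beta$-continuity yields no contradiction. Modifying $f_n,g_n$ off $\hat K_n$ does not help, and the remark that contractivity would make this "immediate" is incorrect: contractivity gives only $\vn{Tf-Tg}_{K_0}\le \vn{f-g}\le 2r$, an $O(r)$ bound, not the structural $\delta C_0(r)+C_1(\delta,r)\vn{f-g}_{\hat K}$ form, which requires $\vn{Tf-Tg}_{K_0}$ to be small as soon as $f$ and $g$ agree on a compact set. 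What does work, but only for \emph{linear} $T$, is the direct route: $\beta$-continuity at $0$ gives, for each compact $K$, a strict seminorm $q(h)=\sup_m a_m\vn{h}_{K_m}$ (with $a_m\to 0$) and $\vn{Th}_K\le q(h)$; splitting the supremum at an index $M$ with $a_m<\delta$ for $m>M$ yields $q(h)\le 2r\delta +(\sup_m a_m)\vn{h}_{K_1\cup\dots\cup K_M}$ on $B_{2r}$, and applying this to $h=f-g$ gives (b) with $C_1$ independent of $\delta$. Linearity is needed precisely to pass from a one-point seminorm estimate to the two-point estimate $\vn{Tf-Tg}_K$, and that reduction is what neither the contraposition nor any compactness argument can replace for general nonlinear $T$. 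Note finally that the paper defers all proofs of this appendix to \cite{Kr19}, so there is no internal proof to compare against; the argument for the nonlinear case of (a)$\Rightarrow$(b) should be sought there.
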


\begin{remark}
	There is not much room between properties (a) and (c). In the case that $X$ is Polish space, and $T$ is linear then (a) and (c) are equivalent, see e.g. \cite[Corollary 3.2 and Theorem 9.1]{Se72}. It is unclear to the author whether (b) and (c) are equivalent in general.
\end{remark}

At various points in the paper, we will work with operators that are constructed by taking closures on dense sets. To do so, we need continuity properties. Even though working with (a) of \ref{proposition:buc_cont_equiv_to_double_bound} would be the desirable from a functional analytic point of view,  \eqref{item:continuity_strict_intermediate} is much more explicit, and also suffices for our analysis.

The following result is proven in \cite[Lemma A.11]{FK06}.

\begin{lemma} \label{lemma:extension_operator}
	Suppose that an operator $T : D \subseteq C_b(X) \rightarrow C_b(X)$ satisfies (b) of Proposition \ref{proposition:buc_cont_equiv_to_double_bound}.  Then $T$ has an extension to the quasi-closure $\widehat{D}$ of $D$ that also satisfies property \eqref{item:continuity_strict_intermediate} of Proposition \ref{proposition:buc_cont_equiv_to_double_bound}  (with the same choice of $\hat{K}$).
\end{lemma}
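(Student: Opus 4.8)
The plan is to extend $T$ "ball by ball" using the quantitative continuity estimate \eqref{item:continuity_strict_intermediate}, in the spirit of \cite[Lemma A.11]{FK06}. First I would record a uniform bound on each ball: fixing $r>0$ with $D\cap B_r\neq\emptyset$ and some reference $g_0\in D\cap B_r$, applying \eqref{item:continuity_strict_intermediate} with $\delta=1$, using $\sup_{\hat K(K,1,r)}|f-g_0|\le 2r$, and letting $K$ range over singletons, one gets $\vn{Tf}\le\vn{Tg_0}+C_0(r)+2rC_1(1,r)=:M(r)$ for every $f\in D\cap B_r$. Thus $T$ is uniformly bounded on $D\cap B_r$.

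Next I would define the extension. Given $f$ in the quasi-closure $\widehat D$, choose $r$ with $f\in\widehat D_r$ and a net $(f_\alpha)\subseteq D\cap B_r$ converging to $f$ strictly; since $B_r$ is bounded this means $\vn{f_\alpha}\le r$ and $f_\alpha\to f$ uniformly on compact sets. I would then show $(Tf_\alpha)$ is buc-Cauchy: for a fixed compact $K$ and $\delta>0$, \eqref{item:continuity_strict_intermediate} gives $\sup_{x\in K}|Tf_\alpha(x)-Tf_\beta(x)|\le\delta C_0(r)+C_1(\delta,r)\sup_{x\in\hat K(K,\delta,r)}|f_\alpha(x)-f_\beta(x)|$, and the second term vanishes along the net because $(f_\alpha)$ is uniformly Cauchy on the compact set $\hat K(K,\delta,r)$; as $\delta$ is arbitrary, $\sup_{x\in K}|Tf_\alpha(x)-Tf_\beta(x)|\to 0$. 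Combined with the bound $M(r)$ this forces $(Tf_\alpha)$ to converge buc to some $\widehat Tf\in C_b(X)$, and I would define the extension to be $\widehat T$. A standard interleaving-of-nets argument (merge two approximating nets into one net converging to $f$ and apply the Cauchy estimate) then shows $\widehat Tf$ is independent of the approximating net and of the radius $r$, and agrees with $Tf$ when $f\in D$.

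Finally I would verify \eqref{item:continuity_strict_intermediate} for $\widehat T$: given $f,g\in\widehat D$ with $\vn f\vee\vn g\le r$, pick $s\ge r$ with $f,g\in\widehat D_s$ and approximating nets $(f_\alpha),(g_\beta)\subseteq D\cap B_s$ converging buc to $f,g$, apply \eqref{item:continuity_strict_intermediate} to $Tf_\alpha$ and $Tg_\beta$, and pass to the limit using $Tf_\alpha\to\widehat Tf$, $Tg_\beta\to\widehat Tg$ uniformly on $K$ and $f_\alpha\to f$, $g_\beta\to g$ uniformly on the compact set $\hat K(K,\delta,s)$. This yields the estimate for $\widehat T$ with the same compact-set function $(K,\delta,\cdot)\mapsto\hat K$ from \eqref{item:continuity_strict_intermediate} — literally $\hat K(K,\delta,r)$ whenever $f,g$ can be approximated within $B_r$.

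The step I expect to be the main obstacle — the only point that is not pure bookkeeping — is the completeness fact used in the second step: that a bounded net in $C_b(X)$ which is uniformly Cauchy on each compact set has a limit lying in $C_b(X)$. This is where the standing hypotheses on $X$ (metrizable, and in fact sufficiently well-behaved, compact sets, as in the setting of \cite{FK06}) are needed, so that the buc limit is again continuous and bounded; everything else is careful handling of nets and of the radius parameter $r$.
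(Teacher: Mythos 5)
The paper gives no proof of its own; it simply cites \cite[Lemma A.11]{FK06}. Your argument is the natural ``ball-by-ball'' extension that underlies the cited result, so there is no in-paper proof to compare against; the only question is whether your proof is correct, and in outline it is.

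Two technical points, both of which you already flag, deserve emphasis. First, the precise scope of the conclusion. Your construction yields the estimate with $\hat K(K,\delta,r)$ for $f,g\in\widehat D_r$, i.e.\ for limits of nets in $D\cap B_r$, \emph{not} for arbitrary $f,g\in\widehat D$ with $\vn f\vee\vn g\le r$. These classes can genuinely differ: a strict limit of functions of norm $s$ may itself have norm $r<s$ without being approximable within $B_r$, since $D$ is not assumed closed under truncation. So ``with the same choice of $\hat K$'' must be read exactly as you read it — the same map $(K,\delta,r)\mapsto\hat K(K,\delta,r)$, with the estimate holding on $\widehat D_r$ — and not as an estimate on $\widehat D\cap B_r$. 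In the place the lemma is actually invoked (Lemma~\ref{lemma:pseudo-resolvent_generation_domain}), contractivity of the resolvent gives $R(\lambda)h\in B_{\vn h}$, so $\widehat D_r\supseteq B_r$ and the distinction is harmless; but your careful phrasing is the correct reading of the lemma. Second, the completeness step in defining $\widehat Tf$: you need that a bounded real-valued function on $X$ which is continuous on every compact set is continuous on $X$, i.e.\ that $X$ is a $k_{\mathbb R}$-space. The paper's blanket hypothesis (completely regular with metrizable compacts) does not formally imply this, but it holds in the concrete settings where the lemma is applied (e.g.\ $X$ Polish, as in Lemma~\ref{lemma:Skorokhod_space_properties}), and the same assumption is implicit in \cite{FK06}. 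Both caveats are accurately identified in your write-up, so I would call this a complete proof modulo those standing hypotheses.
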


\subsection{Operators}

For an operator $A \subseteq M(X) \times M(Y)$ and $c \geq 0$ we write $cA \subseteq M(X) \times M(Y)$ for the operator
\begin{equation*}
c \cdot A := \left\{ (f,c\cdot g) \, \middle| \, (f,g) \in A \right\}.
\end{equation*}
Here we write $c \cdot g$ for the function
\begin{equation*}
c \cdot g(x) := \begin{cases}
cg(x) & \text{if } g(x) \in \bR, \\
\infty & \text{if } g(x) = \infty, \\
- \infty & \text{if } g(x) = - \infty.
\end{cases}
\end{equation*}

The next set of properties is mainly relevant in the setting that $Y = X$.

\begin{definition}
	
	\begin{description}
		\item[Contractivity] We say that $T \subseteq M(X) \times M(X)$ is \textit{contractive} if for all $f_1,f_2 \in \cD(T)$:
		\begin{align*}
		\sup_x Tf_1(x) - Tf_2(x) \leq \sup_x f_1(x) - f_2(x), \\
		\inf_x Tf_1(x) - Tf_2(x) \geq \inf_x f_1(x) - f_2(x).
		\end{align*}
		If in addition $T0 = 0$, contractivity implies that $\sup_x Tf(x) \leq \sup_x f(x)$ and $\inf_x Tf(x) \geq \inf_x f(x)$.
		\item[Dissipativity] We say $A \subseteq M(X) \times M(X)$ is \textit{dissipative} if for all $(f_1,g_1),(f_2,g_2) \in A$ and $\lambda > 0$ we have
		\begin{equation*}
		\vn{f_1 - \lambda g_1 - (f_2 - \lambda g_2)} \geq \vn{f_1 - f_2};
		\end{equation*}
		\item[The range condition] We say $A \subseteq M(X) \times M(X)$ satisfies \textit{the range condition} if for all $\lambda > 0$ we have: the uniform closure of $\cD(A)$ is a subset of $\cR(\bONE - \lambda A)$.
	\end{description}

\end{definition}

The following theorem was proven in  \cite{CL71} for accretive operators but can be easily translated into dissipative operators by changing $A$ by $-A$.

\begin{theorem}[Crandall-Liggett \cite{CL71}] \label{theorem:CL}
	Let $A$ be an operator on a Banach space $E$. Suppose that 
	\begin{enumerate}[(a)]
		\item $A$ is dissipative,
		\item $A$ satisfies the range condition.
	\end{enumerate}
	Denote by $R(\lambda,A) = \left(\bONE- \lambda A\right)^{-1}$. Then there is a strongly continuous (for the norm) contraction semigroup $S(t)$ defined on the uniform closure of $\cD(A)$ and for all $t \geq 0$ and $f$ in the uniform closure of $\cD(A)$
	\begin{equation*}
	\lim_n \vn{R\left(\tfrac{t}{n},A\right)^n f - S(t) f} = 0.
	\end{equation*}
\end{theorem}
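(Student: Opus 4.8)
The statement is the classical Crandall--Liggett generation theorem, so the plan is to reproduce the proof of \cite{CL71}, transcribed from accretive to dissipative operators by replacing $A$ with $-A$. Write $J_\lambda := R(\lambda,A) = (\bONE - \lambda A)^{-1}$. The first block of work is to record the elementary properties of the resolvents. Dissipativity forces $J_\lambda$ to be single-valued and a contraction on its domain, which by the range condition contains the uniform closure $\overline{\cD(A)}$; since $J_\lambda$ maps into $\cD(A)$, all iterates $J_\lambda^n x$ are well defined for $x \in \overline{\cD(A)}$. From $x - \lambda y \in (\bONE - \lambda A)(x)$ for $(x,y) \in A$ one gets the basic bound $\vn{J_\lambda x - x} \leq \lambda |Ax|$ with $|Ax| := \inf\{\vn{y} : (x,y) \in A\}$, and a one-line computation gives the resolvent identity
\begin{equation*}
J_\lambda h = J_\mu\!\left(\tfrac{\mu}{\lambda} h + \tfrac{\lambda - \mu}{\lambda} J_\lambda h\right), \qquad 0 < \mu \leq \lambda .
\end{equation*}
A further consequence, obtained by induction using that $(J_\mu x - x)/\mu \in A(J_\mu x)$, is that $|A(J_\mu^k x)| \leq |Ax|$ for all $k$, hence consecutive iterates obey $\vn{J_\mu^{k+1} x - J_\mu^k x} \leq \mu |Ax|$.

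The heart of the argument is the \emph{fundamental estimate}. Fix $x \in \cD(A)$ and $0 < \mu \leq \lambda$, put $\alpha := \mu/\lambda$, $\beta := 1 - \alpha$, and $a_{i,j} := \vn{J_\lambda^i x - J_\mu^j x}$. Applying the resolvent identity to the $\lambda$-iterate and then contractivity of $J_\mu$ yields
\begin{equation*}
a_{i,j} \leq \alpha\, a_{i-1,j-1} + \beta\, a_{i,j-1},
\end{equation*}
with boundary data $a_{i,0} \leq i\lambda |Ax|$ and $a_{0,j} \leq j\mu |Ax|$ from the previous paragraph. Unwinding this two-index recursion --- interpreting it as a weighted sum over lattice paths, i.e.\ comparing against a binomial random walk --- produces a bound $a_{n,m} \leq |Ax|\,\Psi(n\lambda, m\mu, \lambda, \mu)$ with $\Psi$ of square-root type in the mismatch $n\lambda - m\mu$ and in the step sizes. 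Specialising to $\lambda = t/n$, $\mu = t/m$, so that $n\lambda = m\mu = t$, the mismatch term vanishes and one is left with $\vn{J_{t/n}^n x - J_{t/m}^m x} \leq C|Ax|\,t\,(n^{-1/2} + m^{-1/2}) \to 0$. Hence $\{J_{t/n}^n x\}_n$ is Cauchy in $E$; define $S(t)x := \lim_n J_{t/n}^n x$ for $x \in \cD(A)$.

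It then remains to upgrade $S(t)$ to a strongly continuous contraction semigroup on $\overline{\cD(A)}$. Each $J_{t/n}^n$ is a contraction, so $S(t)$ is a contraction on $\cD(A)$ and extends uniquely to a contraction on $\overline{\cD(A)}$; a $3\varepsilon$-argument using $\sup_n \vn{J_{t/n}^n x - J_{t/n}^n x'} \leq \vn{x - x'}$ shows $J_{t/n}^n x \to S(t)x$ for every $x \in \overline{\cD(A)}$, which is precisely the displayed limit formula. Strong continuity at $t=0$ comes from $\vn{J_{t/n}^n x - x} \leq t|Ax|$ for $x \in \cD(A)$ plus density; continuity in $t$ then follows from the semigroup relation together with equicontinuity of $\{S(t)\}_t$. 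The semigroup property $S(t+s) = S(t)S(s)$ is obtained by another application of the fundamental estimate, now comparing $J_{(t+s)/n}^n$ with products $J_{t/p}^{p}J_{s/q}^{q}$ built from mismatched step sizes and passing to the limit. I expect this combinatorial estimate --- solving the recursion to extract a bound that vanishes along the diagonal $n\lambda = m\mu$, and in the variant needed for the semigroup law --- to be the main technical obstacle; the rest is bookkeeping with contractions and density.
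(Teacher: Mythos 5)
The paper does not prove this result; it is cited verbatim from Crandall and Liggett \cite{CL71} with the single remark that the accretive version there translates to the dissipative version by replacing $A$ with $-A$. So there is no in-paper proof to compare against. Your sketch is a faithful reproduction of the structure of the original argument: the resolvent identity $J_\lambda h = J_\mu(\tfrac{\mu}{\lambda}h + \tfrac{\lambda-\mu}{\lambda}J_\lambda h)$, the boundary bounds $a_{i,0}\le i\lambda|Ax|$ and $a_{0,j}\le j\mu|Ax|$ obtained via $|A(J_\mu^k x)|\le|Ax|$, the two-index recursion $a_{i,j}\le\alpha a_{i-1,j-1}+\beta a_{i,j-1}$, the binomial unwinding leading to a square-root bound that vanishes along the diagonal $n\lambda=m\mu=t$, and the subsequent density and $3\varepsilon$ arguments for extension, strong continuity, and the semigroup law. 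All of the steps you spell out check out; the only thing you leave as a black box, and say so explicitly, is the closed-form solution of the lattice recursion yielding the quantitative estimate $a_{n,m}\le C|Ax|\,t\,(n^{-1/2}+m^{-1/2})$ on the diagonal. That is indeed the technical core of \cite{CL71} and cannot be dispensed with, so a complete write-up would need to carry out that combinatorial computation (the Chernoff-type bound on the binomial weights), but as a proof outline calibrated to what the paper itself takes for granted, your proposal is accurate and takes exactly the route the cited source does.
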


In the context of this theorem, we say that the $A$ generates the semigroup $V(t)$.

\subsection{Semigroups, resolvents and generators on the space of continuous functions}

Finally, we consider the Crandall-Liggett theorem in the context of $C_b(X)$ and $M(X)$. We have seen that the natural topology is the strict topology, rather than the supremum topology.

\begin{definition}[Pseudo-resolvents]
	Consider a space $X$ and a subset $B$ such that $C_b(X) \subseteq B \subseteq M(X)$ on which we have a family of operators $R(\lambda) : B \rightarrow B$, for $\lambda > 0$.  We say that this family is a \textit{pseudo-resolvent} if $R(\lambda) 0 = 0$ for $\lambda > 0$ and if for all $\alpha < \beta$ we have
	\begin{equation*}
	R(\beta) = R(\alpha)\left(R(\beta) - \alpha \frac{R(\beta) - \bONE}{\beta} \right).
	\end{equation*}
\end{definition}

The next result is Proposition 3.10 in \cite{Kr19}.

\begin{proposition} \label{proposition:pseudo-resolvent_generation}
	Consider a space $X$ and a subset $B$ such that $C_b(X) \subseteq B \subseteq M(X)$. Let $R(\lambda)$ be a contractive pseudo-resolvent on $B$. Define
	\begin{equation*}
	\widehat{H} := \bigcup_{\lambda} \left\{\left(R(\lambda)h, \frac{R(\lambda) h - h}{\lambda}\right) \, \middle| \, h \in C_b(Z) \right\}. 
	\end{equation*}
	Then $\widehat{H}$ generates a semigroup $V(t)$ as in the Crandall-Liggett theorem, Theorem \ref{theorem:CL}: for all $t \geq 0$ and $f$ in the uniform closure of $\cD(\widehat{H})$
	\begin{equation*}
	\lim_n \vn{R\left(\tfrac{t}{n}\right)^n f - V(t) f} = 0.
	\end{equation*}
	
\end{proposition}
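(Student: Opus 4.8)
The plan is to check that $\widehat H$ is a dissipative operator satisfying the range condition and then simply to invoke the nonlinear Crandall--Liggett theorem (Theorem \ref{theorem:CL}); once those two hypotheses are in place the semigroup $V(t)$ and the convergence $R(t/n)^n f \to V(t)f$ are exactly its conclusion, so there is nothing further to construct.

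\emph{Step 1: the pair set defining $\widehat H$ does not depend on $\lambda$.} I would first show that $\{(R(\lambda)h,\lambda^{-1}(R(\lambda)h-h)) : h \in C_b(X)\}$ is one and the same set for every $\lambda>0$; equivalently $\cR(R(\lambda))$ is $\lambda$-independent and $R(\lambda)$ is a right inverse of $\bONE-\lambda\widehat H$. Fix $\alpha<\beta$. For a pair $(f,g)$ coming from $R(\beta)$, i.e. $f=R(\beta)h$ and $g=\beta^{-1}(f-h)$, the pseudo-resolvent identity rearranges (writing $h=f-\beta g$) into $f=R(\alpha)(f-\alpha g)$, which displays $(f,g)$ as the pair coming from $R(\alpha)$ with datum $f-\alpha g$. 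For the converse, suppose $f=R(\alpha)(f-\alpha g)$, set $h:=f-\beta g$, and consider $\Phi(u):=R(\alpha)\bigl((1-\tfrac{\alpha}{\beta})u+\tfrac{\alpha}{\beta}h\bigr)$. Since contractivity of $R(\alpha)$ gives $\vn{R(\alpha)u-R(\alpha)v}\le\vn{u-v}$ and $0<1-\tfrac{\alpha}{\beta}<1$, the map $\Phi$ is a strict contraction on the supremum-norm Banach space $M_b(X)$, hence has a unique fixed point. The pseudo-resolvent identity applied to $h$ shows $R(\beta)h$ is a fixed point of $\Phi$, and the algebraic identity $(1-\tfrac{\alpha}{\beta})f+\tfrac{\alpha}{\beta}h=f-\alpha g$ shows $f$ is also one; therefore $R(\beta)h=f$ and $(f,g)$ comes from $R(\beta)$ as well. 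This gives the $\lambda$-independence, and in particular every $h\in C_b(X)$ lies in $\cR(\bONE-\lambda\widehat H)$ with preimage $R(\lambda)h$.

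\emph{Step 2: dissipativity.} Let $(f_1,g_1),(f_2,g_2)\in\widehat H$ and $\lambda>0$. By Step 1 both pairs arise from $R(\lambda)$, so $f_i=R(\lambda)h_i$ with $h_i=f_i-\lambda g_i$. Non-expansiveness of $R(\lambda)$ then yields
\[
\vn{(f_1-\lambda g_1)-(f_2-\lambda g_2)}=\vn{h_1-h_2}\ge\vn{R(\lambda)h_1-R(\lambda)h_2}=\vn{f_1-f_2},
\]
which is precisely dissipativity. Combined with the existence part of Step 1 this shows the resolvents $(\bONE-\lambda\widehat H)^{-1}$ are single-valued and agree with $R(\lambda)$ on $C_b(X)$.

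\emph{Step 3: range condition and conclusion.} Since $\cD(\widehat H)=\cR(R(\lambda))$ sits inside $C_b(X)$ (resp. $B$), its uniform closure still sits inside $M_b(X)$, and by Step 1 that set is contained in $\cR(\bONE-\lambda\widehat H)$; hence the range condition holds for every $\lambda>0$. Theorem \ref{theorem:CL}, applied with $A=\widehat H$ on the supremum-norm Banach space, now produces a strongly continuous contraction semigroup $V(t)$ on the uniform closure of $\cD(\widehat H)$ with $V(t)f=\lim_n R(t/n,\widehat H)^n f=\lim_n R(t/n)^n f$, which is the assertion. I expect the main obstacle to be Step 1, specifically the genuinely nonlinear inclusion $\cR(R(\alpha))\subseteq\cR(R(\beta))$ for $\alpha<\beta$: in the linear case this is a one-line manipulation of the resolvent identity, whereas here it forces the contraction-mapping argument above; Steps 2 and 3 are then routine verifications of the Crandall--Liggett hypotheses.
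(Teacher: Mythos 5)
Your plan---verify that $\widehat H$ is dissipative and satisfies the range condition, then quote Crandall--Liggett---is the expected route, and the resolvent-identity algebra in Step 1 together with the dissipativity computation in Step 2 are sound in spirit. The proposition is imported from \cite{Kr19} so the paper gives no proof to compare against line-by-line, but a gap runs through Steps 1 and 3 of what you wrote: you repeatedly need $R(\lambda)C_b(X)\subseteq C_b(X)$, and this is not among the proposition's stated hypotheses (it does appear in Condition \ref{condition:convergence_of_generators_and_conditions_extended_supsuperlim}, which is where the proposition gets applied). In Step 1, for the pair $(f,g)$ to lie in $\widehat H$ via $R(\alpha)$ (forward direction) you need the candidate datum $f-\alpha g=(1-\alpha/\beta)f+(\alpha/\beta)h$ to be in $C_b(X)$, and via $R(\beta)$ (converse direction) you need $f-\beta g$ to be in $C_b(X)$; since $f=R(\cdot)h'$ for some $h'\in C_b(X)$, both candidates are continuous only if $R(\cdot)h'$ is. Without that, $\widehat H$ is not $\lambda$-independent and Step 2 does not deliver dissipativity for every $\lambda$.

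Step 3 is where the gap becomes visible: the range condition reads $\overline{\cD(\widehat H)}\subseteq\cR(\bONE-\lambda\widehat H)$, and by Step 1 one has $\cR(\bONE-\lambda\widehat H)=C_b(X)$, so you must show $\overline{R(\lambda)C_b(X)}\subseteq C_b(X)$. A priori $R(\lambda)C_b(X)$ only lives in $B\subseteq M(X)$, so this again requires $R(\lambda)C_b(X)\subseteq C_b(X)$; the sentence ``its uniform closure still sits inside $M_b(X)$, and by Step 1 that set is contained in $\cR(\bONE-\lambda\widehat H)$'' proves nothing, since $M_b(X)\not\subseteq C_b(X)$. The same hypothesis is also needed at the very end to identify $R(t/n,\widehat H)^n f$ with $R(t/n)^n f$: the Crandall--Liggett resolvent $R(\lambda,\widehat H)$ is only defined on $\cR(\bONE-\lambda\widehat H)=C_b(X)$, so iterating it requires each iterate to stay in $C_b(X)$. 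A minor separate point: the contraction-mapping argument in Step 1 invokes a fixed point theorem ``on $M_b(X)$'', but $\Phi$ is defined only where its argument lands in $B$; this is easily repaired by directly comparing the two specific fixed points, which yields $\vn{R(\beta)h-f}\le(1-\alpha/\beta)\vn{R(\beta)h-f}$ and hence equality. With the hypothesis $R(\lambda)C_b(X)\subseteq C_b(X)$ stated explicitly (as it surely is in \cite{Kr19}), your argument goes through.
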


\begin{lemma} \label{lemma:pseudo-resolvent_generation_domain}
	Consider the setting of Proposition \ref{proposition:pseudo-resolvent_generation}. Suppose that 
	\begin{enumerate}[(a)]
		\item for all $h \in C_b(X)$ and $\lambda > 0$ we have $R(\lambda)h$ converges strictly to $h$.
		\item The semigroup $V(t)$ is locally strictly equi-continuous on bounded sets: for all compact sets $K \subseteq X$, $\delta > 0$ and $t_0 > 0$, there is a compact set $\widehat{K} = \widehat{K}(K,\delta,t_0)$ such that for all $n$ and $h_{1},h_{2} \in \cD(\widehat{H})$ and $0 \leq t \leq t_0$ that
		\begin{equation*}
		\sup_{y \in K} \left\{ V_n(t)h_{1}(y) - V_n(t)h_{2}(y) \right\} \leq \delta \sup_{x \in X} \left\{ h_{1}(x) - h_{2}(x) \right\} + \sup_{y \in \widehat{K}} \left\{ h_{1}(y) - h_{2}(y) \right\}.
		\end{equation*}
	\end{enumerate}
	Then $V(t)$ extends to a semigroup $V(t)$ on $C_b(X)$ that is locally strictly equi-continuous on bounded sets semigroup.
\end{lemma}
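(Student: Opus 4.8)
The goal is to extend the semigroup $V(t)$, which a priori is only defined on the uniform closure of $\cD(\widehat H)$ via the Crandall--Liggett formula, to all of $C_b(X)$ while preserving local strict equi-continuity on bounded sets. The first step is to verify that $\cD(\widehat H)$ is quasi-dense in $C_b(X)$: for any $h \in C_b(X)$ and $\lambda > 0$ we have $R(\lambda)h \in \cD(\widehat H)$ by the very definition of $\widehat H$ in Proposition \ref{proposition:pseudo-resolvent_generation}, and by hypothesis (a) the functions $R(\lambda)h$ converge strictly (equivalently buc) to $h$ as $\lambda \downarrow 0$. Since $R(\lambda)$ is contractive, $\vn{R(\lambda)h} \leq \vn{h}$, so this approximation stays inside $B_r$ whenever $h \in B_r$; hence $\cD(\widehat H) \cap B_r$ is strictly dense in $C_b(X) \cap B_r$ for every $r$, i.e. $\cD(\widehat H)$ is quasi-dense in $C_b(X)$ and its quasi-closure is all of $C_b(X)$.

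The second step is to recast hypothesis (b) in the form of property (b) of Proposition \ref{proposition:buc_cont_equiv_to_double_bound} so that Lemma \ref{lemma:extension_operator} applies. Fix $t_0 > 0$ and consider $V(t)$ for $0 \leq t \leq t_0$ as an operator on $\cD(\widehat H)$. Given $\delta > 0$, $r > 0$ and a compact set $K$, take $\widehat K = \widehat K(K,\delta,t_0)$ from (b); applying the one-sided estimate to both $(h_1,h_2)$ and $(h_2,h_1)$ and combining gives
\begin{equation*}
\sup_{y \in K}\left| V(t)h_1(y) - V(t)h_2(y) \right| \leq \delta \vn{h_1 - h_2}_\infty^{\pm} + \sup_{y \in \widehat K}\left| h_1(y) - h_2(y)\right|,
\end{equation*}
which is exactly the bound of Proposition \ref{proposition:buc_cont_equiv_to_double_bound}(b) with $C_0(r) = 2r$ and $C_1(\delta,r) = 1$. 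By Lemma \ref{lemma:extension_operator}, for each fixed $t$ the operator $V(t)$ extends from $\cD(\widehat H)$ to its quasi-closure, which by the first step is all of $C_b(X)$, and the extension still satisfies this same estimate with the same $\widehat K$; in particular the extended $V(t)$ is still contractive (as a buc/strict limit of contractions) and locally strictly equi-continuous on bounded sets, uniformly in $0 \leq t \leq t_0$. Letting $t_0$ range over $\bR^+$ gives the extension for all $t \geq 0$.

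The third and final step is to check that the extended family is still a semigroup, i.e. $V(0) = \bONE$ and $V(s)V(t) = V(s+t)$. On $\cD(\widehat H)$ these identities hold by Proposition \ref{proposition:pseudo-resolvent_generation}. For general $f \in C_b(X)$, pick $f_k \in \cD(\widehat H) \cap B_{\vn{f}}$ with $f_k \to f$ buc (step one); then $V(t)f_k \to V(t)f$ buc by the equi-continuity estimate just established, and similarly $V(s)V(t)f_k \to V(s)V(t)f$ — here one uses that $\{V(t)f_k\}_k$ is bounded and that $V(s)$ is continuous for buc convergence on bounded sets. Passing to the limit in $V(s)V(t)f_k = V(s+t)f_k$ yields the semigroup property for $f$; $V(0)f = f$ is immediate. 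The main obstacle, and the point requiring the most care, is the bookkeeping in this last step: the composition $V(s)V(t)$ must be controlled by applying the local strict equi-continuity estimate twice (the intermediate compact set for $V(s)$ must be fed back into the estimate for $V(t)$), and one must keep the norm bound $\vn{V(t)f_k} \leq \vn{f_k} \leq \vn{f}$ in play throughout so that all approximations remain in a fixed ball $B_r$ where strict and buc convergence coincide and where Lemma \ref{lemma:extension_operator} gives uniform control.
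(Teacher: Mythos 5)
Your proof is correct and follows the same route as the paper: deduce quasi-density of $\cD(\widehat H)$ from hypothesis (a) together with contractivity of $R(\lambda)$, then invoke Lemma~\ref{lemma:extension_operator} via hypothesis (b). You are in fact more careful than the paper's two-line proof, which extends the individual operators $V(t)$ but does not explicitly check that the extended family is still a semigroup; your third step closes that (minor) gap correctly by passing to the buc limit in $V(s)V(t)f_k = V(s+t)f_k$ using the equi-continuity and the uniform norm bound from contractivity.
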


\begin{proof}
	By (a) the domain $\cD(\widehat{H})$ is strictly dense in $C_b(X)$. Thus by (b) and Lemma \ref{lemma:extension_operator} all operators $V(t)$ obtained via the Crandall-Liggett theorem extend to $C_b(X)$.
\end{proof}

In the context of Proposition \ref{proposition:pseudo-resolvent_generation} and Lemma \ref{lemma:pseudo-resolvent_generation_domain} we will say that the resolvent $R(\lambda)$ generates the semigroup $V(t)$.

\smallskip

\textbf{Acknowledgement}
Thanks go to Upanshu Sharma and Michiel Renger for drawing attention to the connection between large deviation theory and Gamma convergence.


\bibliographystyle{plain} 
\bibliography{../KraaijBib}{}

\begin{thebibliography}{10}

\bibitem{AmFe14}
Luigi Ambrosio and Jin Feng.
\newblock On a class of first order hamilton-jacobi equations in metric spaces.
\newblock {\em Journal of Differential Equations}, 256(7):2194 -- 2245, 2014.

\bibitem{BaPe88}
G.~Barles and B.~Perthame.
\newblock Exit time problems in optimal control and vanishing viscosity method.
\newblock {\em SIAM J. Control Optim.}, 26(5):1133--1148, 1988.

\bibitem{BaPe90}
G.~Barles and B.~Perthame.
\newblock Comparison principle for {D}irichlet-type {H}amilton-{J}acobi
  equations and singular perturbations of degenerated elliptic equations.
\newblock {\em Appl. Math. Optim.}, 21(1):21--44, 1990.

\bibitem{Br02}
Andrea {Braides}.
\newblock {\em {Gamma-convergence for beginners.}}, volume~22.
\newblock Oxford: Oxford University Press, 2002.

\bibitem{Bu58}
R.~Creighton Buck.
\newblock Bounded continuous functions on a locally compact space.
\newblock {\em Michigan Math. J.}, 5(2):95--104, 1958.

\bibitem{CL71}
M.~G. Crandall and T.~M. Liggett.
\newblock Generation of semi-groups of nonlinear transformations on general
  banach spaces.
\newblock {\em American Journal of Mathematics}, 93(2):pp. 265--298, 1971.

\bibitem{CrLi94}
M.~G. Crandall and P.-L. Lions.
\newblock Hamilton-{J}acobi equations in infinite dimensions. {VI}. {N}onlinear
  {$A$} and {T}ataru's method refined.
\newblock In {\em Evolution equations, control theory, and biomathematics
  ({H}an sur {L}esse, 1991)}, volume 155 of {\em Lecture Notes in Pure and
  Appl. Math.}, pages 51--89. Dekker, New York, 1994.

\bibitem{CL83}
Michael~G. Crandall and Pierre-Louis Lions.
\newblock Viscosity solutions of {H}amilton-{J}acobi equations.
\newblock {\em Trans. Amer. Math. Soc.}, 277(1):1--42, 1983.

\bibitem{EK86}
Stewart~N. Ethier and Thomas~G. Kurtz.
\newblock {\em Markov processes: Characterization and Convergence}.
\newblock Wiley, 1986.

\bibitem{Fe06}
Jin Feng.
\newblock Large deviation for diffusions and {H}amilton-{J}acobi equation in
  {H}ilbert spaces.
\newblock {\em Ann. Probab.}, 34(1):321--385, 2006.

\bibitem{FK06}
Jin Feng and Thomas~G. Kurtz.
\newblock {\em Large Deviations for Stochastic Processes}.
\newblock American Mathematical Society, 2006.

\bibitem{FeMiZi18}
Jin Feng, Toshio Mikami, and Johannes Zimmer.
\newblock A {Hamilton-Jacobi PDE} associated with hydrodynamic fluctuations
  from a nonlinear diffusion.
\newblock {\em preprint; ArXiv:1903.00052}, 2018.

\bibitem{DM93}
Dal~Maso G.
\newblock {\em {An Introduction to Gamma-Convergence.}}
\newblock Boston, MA: Birkh\"auser, 1993.

\bibitem{Kr19}
Richard~C. Kraaij.
\newblock A general convergence result for viscosity solutions of
  {Hamilton-Jacobi} equations and non-linear semigroups.
\newblock {\em preprint; ArXiv:1903.04196}, 2019.

\bibitem{Ku70}
T.G. {Kurtz}.
\newblock {A general theorem on the convergence of operator semigroups.}
\newblock {\em {Trans. Am. Math. Soc.}}, 148:23--32, 1970.

\bibitem{Ku73}
Thomas~G. {Kurtz}.
\newblock {Convergence of sequences of semigroups of nonlinear operators with
  an application to gas kinetics.}
\newblock {\em {Trans. Am. Math. Soc.}}, 186:259--272, 1974.

\bibitem{Ma18}
Mauro Mariani.
\newblock A {$\Gamma$}-convergence approach to large deviations.
\newblock {\em Ann. Sc. Norm. Super. Pisa Cl. Sci. (5)}, 18(3):951--976, 2018.

\bibitem{Se72}
F.~Dennis Sentilles.
\newblock Bounded continuous functions on a completely regular space.
\newblock {\em Trans. Amer. Math. Soc.}, 168:311--336, 1972.

\end{thebibliography}

\end{document}